\documentclass[reqno]{amsart}
\usepackage[foot]{amsaddr}

\usepackage[hidelinks]{hyperref}
\usepackage{url}
\usepackage{amssymb}
\usepackage{amscd}
\usepackage{amsthm}
\usepackage{setspace}
\usepackage{enumerate}
\usepackage{graphicx}
\usepackage{mathtools}
\usepackage{tcolorbox} 
\usepackage{siunitx}
\usepackage{tikz-cd}
\usepackage{bm,blkarray}
\numberwithin{equation}{section}
\usepackage[tableposition=top]{caption}
\usepackage{booktabs,dcolumn}
\usepackage{subcaption} 
\renewcommand\P{\mathbb{P}}
\newcommand\E{\mathbb{E}}

\newcommand\R{\mathbb{R}}

\newcommand{\GL}{\mathop{\rm GL}}

\newcommand{\growth}{\mathrm{growth}}

\title[Growth Factor of Random Matrices]{On the Growth Factor of Random Matrices}
\author{John Urschel}
\address{Department of Mathematics, Massachusetts Institute of Technology, \newline \indent Cambridge, MA, 02139 USA.}
\email{urschel@mit.edu}
\subjclass[2020]{65F05, 15A23, 60B20}
\keywords{Gaussian elimination, LU factorization, growth factor, partial pivoting, smoothed analysis.}

\newtheorem{theorem}{Theorem}[section]

\newtheorem{lemma}[theorem]{Lemma}
\newtheorem{conjecture}[theorem]{Conjecture}
\newtheorem{proposition}[theorem]{Proposition}

\newtheorem{corollary}[theorem]{Corollary}

\begin{document}

\begin{abstract}
Gaussian elimination is the oldest and most popular method for solving a linear system. Its numerical stability for a given matrix is controlled by the growth factor, a measure of how large entries can become during elimination. In this work, we prove a number of new results regarding the growth factor of random matrices. First, we provide tight estimates for the growth factor of a matrix preconditioned by a Haar orthogonal matrix without pivoting and characterize the asymptotic distribution of the growth factor of Gaussian matrices without pivoting. Second, and most notably, we prove that the growth factor of an $n \times n$ Gaussian matrix under partial pivoting is rarely much larger than $\sqrt{n}$, resolving an old conjecture of Nick Trefethen. The same techniques used to prove this conjecture also provide an improved smoothed analysis of the growth factor under partial pivoting.
\end{abstract}

\maketitle

\section{Introduction}
Gaussian elimination is the premier technique for solving a linear system $A \bm x = \bm b$, by factoring $A$ into the product of a lower and upper triangular matrix. In particular, given an $n \times n$ matrix $A$ with non-zero leading principal minors, the procedure performs a series of rank-one transformations
\[A \rightarrow A/A_{[1],[1]} \rightarrow A/A_{[2],[2]} \rightarrow \ldots \rightarrow A/A_{[n-1],[n-1]}.\]
The resulting LU factorization is encoded in the first row and column of the iterates $A/A_{[k],[k]}$,
\begin{equation}\label{eq:landu}L \bm e_{k+1} = \begin{bmatrix} 0 \\ (A / A_{[k],[k]})_{11}^{-1}
(A / A_{[k],[k]}) \bm e_1\end{bmatrix} \qquad \text{and} \qquad \bm e_{k+1}^T U = \begin{bmatrix} 0 & \bm e_1^T (A / A_{[k],[k]}) \end{bmatrix},\end{equation}
where $M / X$ is the Schur complement of $M$ with respect to the submatrix $X$, and $[n]=\{1,\ldots,n\}$. While this algorithm is exact in exact arithmetic, issues may still arise in finite precision due to the accumulation of round-off error. The error in Gaussian elimination in finite precision is controlled by the growth factor
\[\growth(A) = \max \left\{ \|L\|_{\max},\frac{\|U\|_{\max}}{\|A\|_{\max}}\right\},\]
where $\|\cdot\|_{\max}$ is the maximum magnitude entry. If $\growth(A)$ is not too large, then the LU factorization is stable\footnote{One subtle point, which is often glossed over in the analysis of Gaussian elimination, is that the growth factor needed for error analysis is the growth factor under floating point arithmetic. In some cases, this quantity can be quite different from the growth factor in exact arithmetic, due to an accumulation of round-off error or a change in pivot selection. Still, for mathematical neatness, the majority of results regarding the growth factor are for exact arithmetic. There is little harm in this: The worst-case growth factor for partial pivoting is the same in exact and floating point arithmetic, and, given a reasonable number of bits of precision, the worst-case growth factor for complete pivoting is barely changed by floating point arithmetic \cite[Theorem 1.5]{edelman2024some}, and the growth factor of a Gaussian matrix under partial pivoting is increased by at most a small multiplicative constant with high probability \cite[Section 7]{huang2024average}.} (e.g., roughly $\log (n \growth (A))$ bits of precision suffice); see \cite[Theorem 9.3 \& Lemma 9.6]{higham2002accuracy} for details. For this reason, a great deal of research regards the behavior of the growth factor under various pivoting and preconditioning strategies. 

\subsection{Partial Pivoting} Gaussian elimination with partial pivoting is the standard linear system solver in nearly every programming language, including the $\mathrm{numpy.linalg.solve}$ in Python and the backslash command for a general matrix in Matlab and Julia. The partial pivoting procedure is a greedy rule for reordering rows during elimination to avoid creating large entries. In particular, at each step of elimination, the first row is (possibly) swapped with another, so that the largest magnitude entry in the first column lies in the first row. While algorithmically this is done iteratively, it is represented mathematically by left-multiplication by a single permutation matrix $P$ that satisfies 
\begin{equation}\label{eq:partial}
|((PA)/(PA)_{[k],[k]})_{11}| = \|((PA)/(PA)_{[k],[k]}) \bm e_1\|_{\infty} \qquad \text{for} \quad k = 0,1,\ldots, n-1.\end{equation}
Let $\Pi(\cdot): \GL_n(\R) \rightarrow \GL_n(\R)$ denote the partial pivoting map, which left-multiplies a non-singular matrix $A$ by the permutation matrix satisfying \eqref{eq:partial}. If there are multiple, $\Pi(\cdot)$ breaks ties at each stage of elimination by choosing the eligible row with smallest index in the current row ordering. However, in this paper, such ambiguities occur with probability zero.

The growth factor under partial pivoting can be exponentially large. A fairly simple doubling argument shows that the growth factor under partial pivoting is at most $2^{n-1}$. In 1961, Wilkinson produced a matrix construction that achieves this quantity in every dimension \cite{wilkinson1961error}. In spite of this, the relatively low computational cost and memory access requirements for implementing the partial pivoting rule \eqref{eq:partial}, and the apparent rarity of a large growth factor, have made Gaussian elimination with partial pivoting the preferred method for solving a linear system. In an attempt to provide an explanation for this apparent rarity, Trefethen and Schreiber performed an empirical average-case analysis of Gaussian elimination under various pivoting strategies \cite{trefethen1990average}. A matrix $G \in \R^{m \times n}$ is said to be Gaussian if its entries are independent standard normal random variables. Inspired by their analysis, Edelman performed further numerical experiments for Gaussian matrices under partial pivoting. He noted that the growth factor appeared to grow like $n^{\frac12}$, but remarked that ``proving such a result to be true is probably very difficult'' \cite{edelman1995complete}. Trefethen also observed this behavior and made the following conjecture, first at the 1995 International Congress on Industrial and Applied Mathematics, and then in his 1997 text {\it Numerical Linear Algebra}, coauthored with David Bau \cite[pg. 169]{trefethen1997numerical}.

\begin{conjecture}[Trefethen, {\cite[pg. 169]{trefethen1997numerical}}]\label{conj:trefethen}
Let $G \in \R^{n \times n}$ be a Gaussian matrix and $\Pi(\cdot)$ be the partial pivoting map. For any $\alpha, M >0$,
\[\P \left[\growth(\Pi(G)) > n^{\frac12 + \alpha}\right] < n^{-M} \]
for all $n$ sufficiently large.
\end{conjecture}

Trefethen later offered \$1000 for a resolution of Conjecture \ref{conj:trefethen} \cite{trefethen2012smart}. 
The numerical evidence for this conjecture is overwhelming; see \cite[Figures 22.1 \& 22.2]{trefethen1997numerical} and their improvement in \cite{driscoll2007searching}. The mathematical progress has been made on two fronts. First, Trefethen himself claimed an unpublished proof of the needed bound for the last pivot $|U_{nn}|$ \cite{trefethen2012smart}, which was released 14 years later, in parallel with this manuscript \cite{trefethen2026lu}. Second, Huang and Tikhomirov proved that a Gaussian matrix under partial pivoting  has growth at most $n^c$ with high probability, for a large unspecified constant $c$ \cite{huang2024average}. 

Another approach aimed at the stability of partial pivoting is smoothed analysis, which considers the performance of an algorithm under small random perturbations to the input. Sankar proved that any matrix plus a small Gaussian perturbation, $A + \epsilon G$ where $\|A\|_2 \le 1$, has growth factor under partial pivoting bounded by $(n/\epsilon)^{O(\log n)}$ with high probability \cite{sankar2004smoothed}. For more information regarding Gaussian elimination with partial pivoting, we refer the reader to \cite{trefethen1997numerical,urschel2025numerical}.

\subsection{Alternatives to Partial Pivoting} Complete pivoting has a growth factor which is only quasi-polynomial; see \cite{bisain2025new,edelman2024some,shah2026entry,wilkinson1961error} for details. However, due to the additional computational complexity required, partial pivoting remains preferred in practice. Demmel, Grigori, and Rusciano considered the benefits of avoiding pivoting altogether, by preconditioning a matrix via pre- and post-multiplication by a Haar orthogonal matrix. They proved that, for any $A \in\GL_n(\R)$, the expected log growth factor of $Q_1 A Q_2$, for independent $Q_1,Q_2 \sim \mathrm{Haar}(\mathrm{O}(n))$, is  $O(\log n)$ \cite[Corollary 5.19]{demmel2023improved}. Butterfly matrices have also been considered as a potential preconditioner to limit growth \cite{parker1995random,peca2023growth} and have the benefit of being easier to apply. More generally, the possibility of avoiding pivoting or preconditioning altogether has been considered for certain random matrix models. For instance, Yeung and Chan proved probabilistic bounds on $\|U\|_{\infty}/\|A\|_{\infty}$ and $\|L\|_{\infty}$ for a Gaussian matrix without pivoting \cite{yeung1997probabilistic}. This analysis was tightened and generalized to the smoothed analysis setting by Sankar, Spielman, and Teng \cite{sankar2006smoothed}. In particular, for Gaussian elimination without pivoting $A+ \epsilon G = LU$, with $\|A\|_2 \le 1$ and $\epsilon \in (0,1)$, they proved that
\[ \P\left[\|L\|_{\infty} > x \right] \lesssim  \frac{n^2}{x} \left( \frac{1}{\epsilon} + \sqrt{\log n} \right) \qquad \text{and} \qquad \P\left[\frac{\|U\|_{\infty}}{\|A+\epsilon G\|_\infty} > 1 + x \right] \lesssim \frac{1}{x} \min \left\{\frac{n^2}{\epsilon}, n^{\frac32} + \frac{n^{\frac12}}{\epsilon^2}\right\}  .\]
Somewhat remarkably, even matrices with entries sampled independently from a non-degenerate discrete distribution with finite support have a polynomial growth factor with constant probability \cite{mateo2026lu}.

\subsection{Our Contributions} Here we prove a number of results regarding the growth factor of random matrices, both without pivoting and with partial pivoting. First, we prove an upper bound on the growth factor of a matrix left-multiplied by a Haar orthogonal matrix without pivoting.

\begin{theorem}\label{thm:orthog}
Let $A \in \GL_n(\mathbb{R})$ and $Q \sim \mathrm{Haar}(\mathrm{O}(n))$. Then
\[\P\left[ \growth(QA) >  \frac{14 \, n^{\frac32} \sqrt{\log(e n)}}{\delta}\right] < \delta \qquad \text{for all} \quad \delta \in (0,1).\]
\end{theorem}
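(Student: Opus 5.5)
We bound $\|L\|_{\max}$ and $\|U\|_{\max}/\|QA\|_{\max}$ separately, using formulas for the entries of the LU factors of $QA$ in terms of minors. The entries are
\[U_{ij} = \frac{\det (QA)_{[i-1]\cup\{i\},[i-1]\cup\{j\}}}{\det (QA)_{[i-1],[i-1]}}, \qquad L_{ji} = \frac{\det (QA)_{[i-1]\cup\{j\},[i]}}{\det (QA)_{[i],[i]}} .\]
Write $A=\tilde Q R$ with $R$ upper triangular (QR factorization). Since $Q\tilde Q$ is again Haar, we may assume $A=R$ is upper triangular. Then $(QR)_{S,[k]} = Q_{S,[k]}R_{[k],[k]}$, so the ratio defining $L_{ji}$ equals the same ratio for $Q$ alone. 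Hence $L$ is the $L$-factor of $Q$ itself. For $U$, the row space of the first $i-1$ rows of $QA$ is controlled by $Q_{[i-1],:}$.

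A cleaner route uses the geometric formula
\[|U_{ii}| = \operatorname{dist}\bigl(\text{row}_i(QA),\ \operatorname{span}\{\text{rows}_{1..i-1}\}\bigr)\]
restricted appropriately. Alternatively, invert: $U_{ij}$ is the $(1,j-i+1)$ entry of the Schur complement, so $|U_{ij}|\le \|(QA)/(QA)_{[i-1],[i-1]}\|_{\max}$. The Schur complement equals the inverse of the trailing block of $(QA)^{-1}=A^{-1}Q^T$. I would bound it directly as
\[ (QA)/(QA)_{[k],[k]} = Q_{[k]^c,:}A_{:,[k]^c} - Q_{[k]^c,:}A_{:,[k]}\bigl(Q_{[k],:}A_{:,[k]}\bigr)^{-1}Q_{[k],:}A_{:,[k]^c}. \]
This is $Q_{[k]^c,:}\,(I-P_k)\,A_{:,[k]^c}$, where $P_k$ is the oblique projection onto $\mathrm{range}(A_{:,[k]})$ along $\ker Q_{[k],:}$. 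Its entries are $\bm q_a^T(I-P_k)\bm a_b$. Each such entry equals the ratio $\det(\cdot)_{(k+1)\times(k+1)}/\det(\cdot)_{k\times k}$ of minors of $QA$. By Cauchy--Binet, this can be bounded by $\|A\bm e_b\|$ times a quantity depending on the angle between $\mathrm{span}(\bm q_1,\dots,\bm q_k,\bm q_a)$ and the column span $\mathrm{range}(A_{:,[k]})$.

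The central probabilistic estimate: for fixed $k$-dimensional subspaces, the smallest singular value of $Q_{[k],:}V$, with $V$ an orthonormal basis of a fixed $k$-space, is rarely tiny. This is a Jacobi/MANOVA-ensemble small-ball bound of the form $\P[\sigma_{\min}\le t/\sqrt n]\lesssim t$, by analogy with Gaussian $\sigma_{\min}$ bounds scaled by $n^{-1/2}$. Combined with $\|\bm q_a\|=1$ and $\|\bm a_b\|_2\le \sqrt n\|A\|_{\max}$, this gives $|U_{ij}|\lesssim \sqrt n\cdot \sqrt n/t\cdot\|A\|_{\max}$. Also, $\|QA\|_{\max}\ge \|A\bm e_1\|\cdot$(max entry of a uniform unit vector) $\gtrsim \|A\|_{\max}$ up to columns. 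More carefully, I would pick the column $b$ achieving the max norm; then $\|Q\bm a_b\|_\infty\ge \|\bm a_b\|_2\sqrt{\log n/n}$ with high probability. This is where the $\sqrt{\log(en)}$ factor enters in the denominator-side normalization. For $L$, the same small-ball bound for $Q_{[i],[i]}$ handles entries of $L$ as well: numerator minors of $Q$ are bounded by 1 in the relevant normalized sense.

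Finally, I would do a union bound over the $n$ stages $k$ with allotted failure probability $\delta/n$ per stage (or a direct bound on the whole row using $\|\cdot\|_2$). This produces the $n^{3/2}/\delta$ scaling: roughly $n$ from $\sqrt n\cdot\sqrt n$, plus a $\sqrt n$ from summing or converting norms, so $t\sim\delta/n$ is too costly. I therefore expect to use the row-norm bound to avoid a full union bound. The main obstacle is the sharp small-ball estimate $\P[\sigma_{\min}(Q_{[k],:}V)\le s]\le C s\sqrt n$ uniformly in $k$, which I would derive from the explicit Jacobi density of the cosines of principal angles, or by realizing $Q_{[k],:}$ as $G(G^TG)^{-1/2}$ with Gaussian $G$ and invoking Gaussian smallest-singular-value bounds.
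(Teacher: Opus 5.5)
Your reduction to upper triangular $A$ and the observation that $L$ is then the $L$-factor of $Q$ agree with the paper. The argument for $U$, however, has a genuine gap.

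The small-ball estimate you call central, $\P[\sigma_{\min}(Q_{[k],:}V)\le t/\sqrt n]\lesssim t$ uniformly in $k$, is false. After the reduction $V$ spans $\bm e_1,\ldots,\bm e_k$. For $k\asymp n/2$ the block $Q_{[k],[k]}$ has a Jacobi-ensemble hard edge with $k$ particles, so its smallest singular value is of order $1/n$. The correct bound is of order $t\sqrt{\min(k,n-k)}$. Your Gaussian analogy drops a factor $\sqrt k$: for a $k\times k$ Gaussian $G$ and small $\epsilon$, $\P[\sigma_{\min}(G)\le\epsilon]\asymp\epsilon\sqrt k$.

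More importantly, even granting your estimate, the counting does not reach $n^{3/2}$. Bounding every Schur-complement entry by $\|\bm a_b\|_2/\sigma_{\min}$ gives $\|U\|_{\max}\lesssim n\|QA\|_{\max}/t$ at each stage except with probability $\asymp t$. Summing over the $n$ stages then gives growth $\lesssim n^2/\delta$, or $n^{5/2}/\delta$ with the true small-ball bound. Summing over stages is not the problem, since the paper does exactly that; the per-stage estimate is too weak. Even replacing $1/\sigma_{\min}$ by the exact norm of the relevant row of $(Q_{[k]^c,[k]^c})^{-T}$ only gives $\sqrt n\sum_m\sqrt{n-m+1}/t\asymp n^2/t$.

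The missing idea is the following.

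\textbf{Only one vector matters.} For upper triangular $A$, $(QA)/(QA)_{[k],[k]}=(Q_{[k]^c,[k]^c})^{-T}A_{[k]^c,[k]^c}$. So the entries of row $k+1$ of $U$ are $A_{[k]^c,j}^T(Q_{[k]^c,[k]^c})^{-1}\bm e_1$, and only the single vector $(Q_{[k]^c,[k]^c})^{-1}\bm e_1$ enters.

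\textbf{Its law is explicit.} With $m=n-k$, this vector has law $|\bm\eta_1|^{-1}\bm\xi$, where $\bm\eta\sim\mathrm{Unif}(\mathbb S^{n-m})$ and $\bm\xi\sim\mathrm{Unif}(\mathbb S^{m-1})$ are independent. The norm comes from the one-dimensional intersection of the column space of $Q_{:,[m]}$ with $\mathrm{span}\{\bm e_1,\bm e_{m+1},\ldots,\bm e_n\}$. The direction is uniform by orthogonal invariance.

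\textbf{The uniform direction gives the gain.} Against the $m$ fixed unit vectors $\bm y_j$ it yields $\E\max_j|\bm y_j^T\bm\xi|\le 3\sqrt{\log(em)/m}$. This, not the normalization, is where $\sqrt{\log(en)}$ enters. The per-row failure probability becomes $\frac6t\sqrt{(n-m+1)\log(em)/m}$, which sums over $m$ to at most $12n\sqrt{\log(en)}/t$.

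\textbf{Normalization is deterministic.} The bound $\max_j\|A\bm e_j\|_2=\max_j\|QA\bm e_j\|_2\le\sqrt n\,\|QA\|_{\max}$ then gives $12n^{3/2}\sqrt{\log(en)}/t$. Your high-probability lower bound on $\|Q\bm a_b\|_\infty$ is unnecessary. Its fixed positive failure probability also could not be absorbed into an arbitrarily small $\delta$.

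\textbf{The same issue affects $L$.} A small-ball bound on $Q_{[i],[i]}$ is too lossy there too. The paper uses that the subdiagonal part of column $i$ of $L$ has law $(g_{i+1},\ldots,g_n)/g_i$ with i.i.d.\ standard Gaussians. This gives $\P[\|L\|_{\max}>t]\le 2(n-1)\sqrt{\log(en)}/(\sqrt\pi\,t)$.
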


The $\frac32$ exponent in the previous theorem is tight for any non-adaptive orthogonal preconditioner, illustrated by the asymptotic distribution of the growth factor of a Gaussian matrix without pivoting.

\begin{theorem}\label{thm:gauss_nopivot}
Let $G \in \R^{n \times n}$ be a Gaussian matrix. Then $n^{-\frac32}\growth(G) \overset{d}{\rightarrow} X$, where $X \sim \mathrm{InvExp}\big(\frac2{3\sqrt{\pi}}\big)$, i.e., $\Pr[X \le x] = e^{-\frac{2}{3 \sqrt{\pi} x}}$ for $x>0$.\end{theorem}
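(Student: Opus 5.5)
We need the limit law of $n^{-3/2}\growth(G)$ for a Gaussian $G$ with no pivoting. The mechanism is that small pivots drive the growth, and the small pivots behave like a Poisson process. Write $s_k = (G/G_{[k-1],[k-1]})_{11}$ for the $k$-th pivot, so that $s_k = \det G_{[k],[k]}/\det G_{[k-1],[k-1]}$.

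The key structural fact is this. Conditional on $G_{[k-1],[k-1]}$, the Schur complement $G/G_{[k-1],[k-1]}$ has entries
\[G_{ij} - G_{i,[k-1]}G_{[k-1],[k-1]}^{-1}G_{[k-1],j}.\]
Conditioning on the first $k-1$ rows as well, the entries of the $k$-th column below the pivot and the entries of the $k$-th row to the right are Gaussian with variance $1+\|G_{[k-1],[k-1]}^{-1}G_{[k-1],j}\|^2$ (or its transpose analogue). This variance is of order $k$ typically. The pivot $s_k$ itself has a density at $0$ bounded by roughly $c/\sqrt{k}$, since $s_k$ is Gaussian with variance $1+\|\cdot\|^2\approx k$ after conditioning.

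The plan is then as follows.

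\textbf{Step 1: reduce to $L$.} Show that $\|U\|_{\max}/\|G\|_{\max}$ is $o_P(n^{3/2})$. The entries of $U$ are entries of Schur complements. These are Gaussian with variance at most polynomially large, so they are typically $O(\sqrt{n\log n})$ up to rare events. Since $\|G\|_{\max}\approx\sqrt{2\log n}$, this gives $\|U\|_{\max}/\|G\|_{\max}=O_P(n^{1/2}\,\mathrm{polylog})$. The heavy tail therefore comes from $\|L\|_{\max}$.

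\textbf{Step 2: write $\|L\|_{\max}$ as a maximum of ratios.} We have
\[\|L\|_{\max} = \max_k \frac{\max_{i>k}|(\text{Schur})_{ik}|}{|s_k|}.\]
The numerator at step $k$ is the maximum of $n-k$ Gaussians of standard deviation about $\sqrt{k}$, coupled with $s_k$. For $L_{ik}$ to exceed $xn^{3/2}$ we need $|s_k|\lesssim \sqrt{k}\,\sqrt{\log n}/(xn^{3/2})$.

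A heuristic sharpening avoids the log. Conditional on the small value $s_k=t$, the numerator entries are $g_i\sqrt{k}$ with $g_i$ independent standard normals. Then $\P[\max_i |g_i|\sqrt{k}/|t|>y]$ integrated against the density of $s_k$ near $0$, which is $\approx 1/\sqrt{2\pi k}$, gives
\[\sum_{i>k} \E\bigl[|g_i|\sqrt{k}\bigr]\cdot\frac{2}{\sqrt{2\pi k}}\cdot\frac1y \;=\; \frac{(n-k)\,2\sqrt{2/\pi}}{\sqrt{2\pi}\,y}.\]
Summing over $k$ gives
\[\sum_k (n-k)\,\frac{2}{\pi y}\approx \frac{n^2}{\pi y}.\]
That rate has the wrong exponent, so the variances must be tracked honestly. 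The variance of the Schur complement entries at step $k$ is $1+\|G_{[k-1],[k-1]}^{-1}\bm v\|^2$, and for square Gaussian blocks this is typically of order $k^2$, not $k$ (roughly $\sigma_{\min}^{-2}\sim k^2$ in some directions). The ratio $L_{ik}=N_{ik}/s_k$ is in fact a ratio of two correlated Gaussians sharing the same random variance scale, which cancels. The per-entry tail is then Cauchy-like, $\P[|L_{ik}|>y]\approx c_{ik}/y$, with $c_{ik}$ depending on the correlation structure. The exact constant $\tfrac{2}{3\sqrt\pi}$ should arise as a sum over $k$ of such constants. The factor $\tfrac{1}{3}$ is suggestive of $\sum_k (n-k)^{1/2}k\cdot(\cdot)/n^{3/2}$ or of $\int_0^1 (1-t)t^{1/2}\,dt$-type integrals.

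\textbf{Step 3: Poisson convergence.} The aim is to show that the number of pairs $(i,k)$ with $|L_{ik}|>xn^{3/2}$ converges to a Poisson random variable with mean $\tfrac{2}{3\sqrt\pi x}$. Then
\[\P[\growth(G)\le xn^{3/2}]\to e^{-2/(3\sqrt\pi x)}.\]
This would go through a second-moment / Chen–Stein argument. Exceedances at different $k$ require distinct small pivots, and these are asymptotically independent. Multiple exceedances at the same $k$ with different $i$ are in fact strongly dependent, so one should count events $\{\max_i|L_{ik}|>xn^{3/2}\}$ per $k$ instead of counting pairs. Each such event essentially means $|s_k|<\max_i|N_{ik}|/(xn^{3/2})$. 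The first-moment computation for these events, using exact conditional Gaussian laws via the determinant-ratio representation, gives the constant.

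\textbf{Main obstacle.} The hardest part is this exact first-moment computation together with the asymptotic independence across $k$. The first moment requires the joint law of $s_k$ and the column $N_{\cdot k}$. I would compute it using rotational invariance: condition on $G_{[k-1],\cdot}$ and $G_{\cdot,[k-1]}$, so that $(s_k,N_{\cdot k})$ is jointly Gaussian with an explicit covariance. I would then use known limit laws for $\|G_{[k-1],[k-1]}^{-1}\bm v\|$, whose distribution involves the smallest singular values of Gaussian matrices, to evaluate the integral and obtain $\tfrac{2}{3\sqrt\pi}$.
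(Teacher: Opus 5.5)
There is a genuine gap: the proposal has the roles of $L$ and $U$ reversed, so Step 1 is false and Steps 2--3 aim at a quantity that is too small. For $L$, the subdiagonal part of column $i$ has exactly the law $(g_{i+1},\ldots,g_n)/g_i$ with i.i.d.\ standard normals. One way to see this is to write $G=QR$ with $Q$ Haar, so that $G$ and $Q$ share the same $L$ factor, and then use that the first column of $(Q_{S,S})^{-T}$ points in a uniform direction. The numerator scale and the pivot scale therefore cancel exactly; no ``variance of order $k^2$'' effect survives.

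Because every entry of a column shares the denominator $g_i$, exceedances within a column are completely dependent. The per-column bound is $\P[\max_{j>i}|L_{ji}|>y]\le\sqrt{2/\pi}\,\E\max_j|g_j|/y\lesssim\sqrt{\log n}/y$, not the $(n-k)\cdot\frac{2}{\pi y}$ you get by counting pairs. Summing over columns gives $\|L\|_{\max}=O_P(n\sqrt{\log n})=o_P(n^{3/2})$. The Poisson mean for the events $\{\max_i|L_{ik}|>xn^{3/2}\}$ therefore tends to $0$, not to $\frac{2}{3\sqrt\pi x}$.

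The entire limit law lives in $U$. For $j>k$, write $U_{k+1,j}=x_k^TG_{[k+1],j}$ with $x_k^T=(-G_{k+1,[k]}G_{[k],[k]}^{-1},1)$. Conditional on the first $k$ columns, these $n-k$ entries are i.i.d.\ $N(0,X_k^2)$ with $X_k=\|x_k\|_2$. Hence $\max_{j>k}|U_{k+1,j}|=X_kY_k$, where $Y_k$ is a maximum of $n-k$ independent $|N(0,1)|$ variables and is independent of $X_k$.

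Your Step 1 says the variances are polynomially bounded, so the entries are $O(\sqrt{n\log n})$ ``up to rare events.'' This discards exactly the events that matter. The vector $x_k/X_k$ spans $\mathrm{null}(G_{[k+1],[k]}^T)$ and is uniform on the sphere after a random sign. So $X_k^{-2}$ is the squared last coordinate of a uniform unit vector in $\R^{k+1}$, that is, $\mathrm{Beta}(\frac12,\frac k2)$, which gives the Cauchy-type tail $\P[X_k>t]\approx\sqrt{2k/\pi}/t$. Moreover, the $X_k$ are exactly independent across $k$: apply a Haar rotation to the first $k+1$ rows, which re-randomizes $X_k$ and leaves every $X_m$ with $m>k$ unchanged.

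Consequently $\max_kX_k$, normalized by $\sum_k\sqrt{2k/\pi}\approx\frac{2\sqrt2}{3\sqrt\pi}n^{3/2}$, converges to the standard inverse-exponential law. Concentration then gives $Y_k\approx\sqrt{2\log n}$ against $\|G\|_{\max}\approx2\sqrt{\log n}$, which supplies the remaining factor $1/\sqrt2$ and yields the constant $\frac{2}{3\sqrt\pi}$. This exact Beta representation, with exact independence across rows, is the missing idea. It makes both the smallest-singular-value limit laws and the Chen--Stein argument unnecessary, but only once the computation is aimed at $U$ rather than $L$.
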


\begin{figure}[t!]
\centering
  \begin{subfigure}[t]{0.49\textwidth}
    \centering
    \includegraphics[width =.9\textwidth]{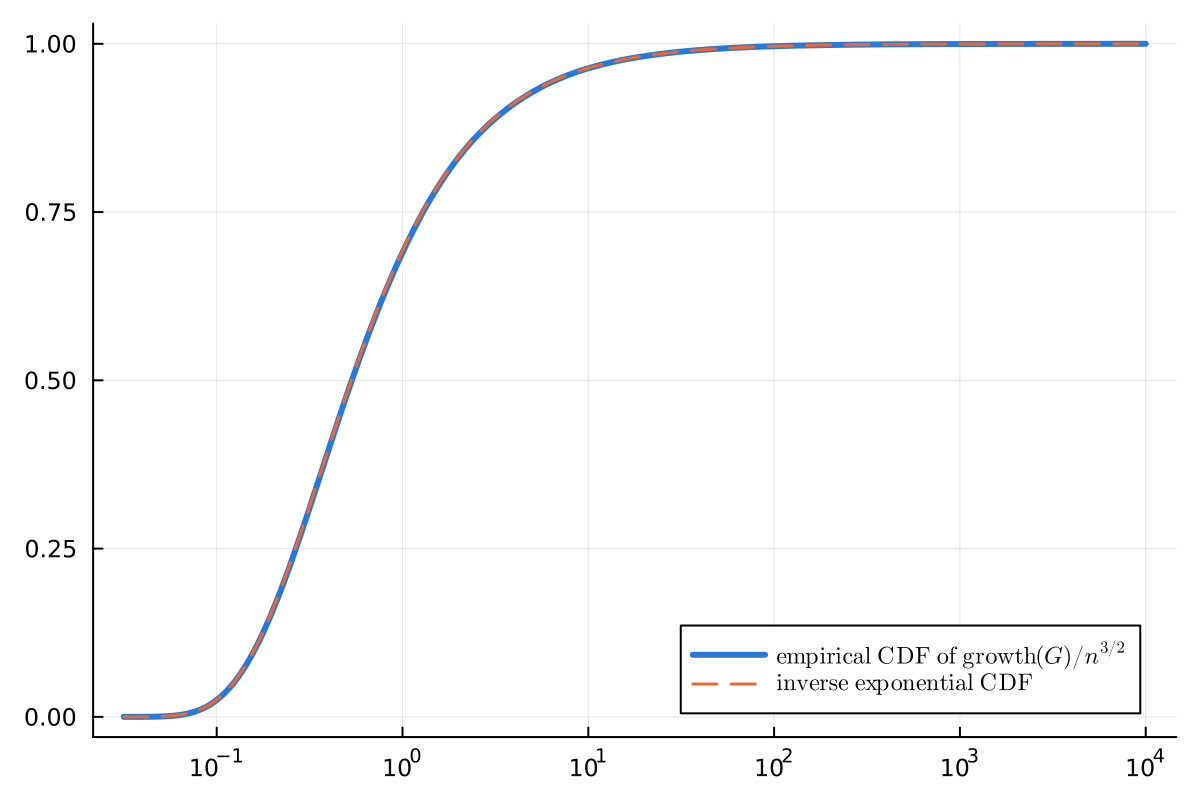}
    \caption{}
  \end{subfigure}\hfill
  \begin{subfigure}[t]{0.49\textwidth}
    \centering
\includegraphics[width =.9\textwidth]{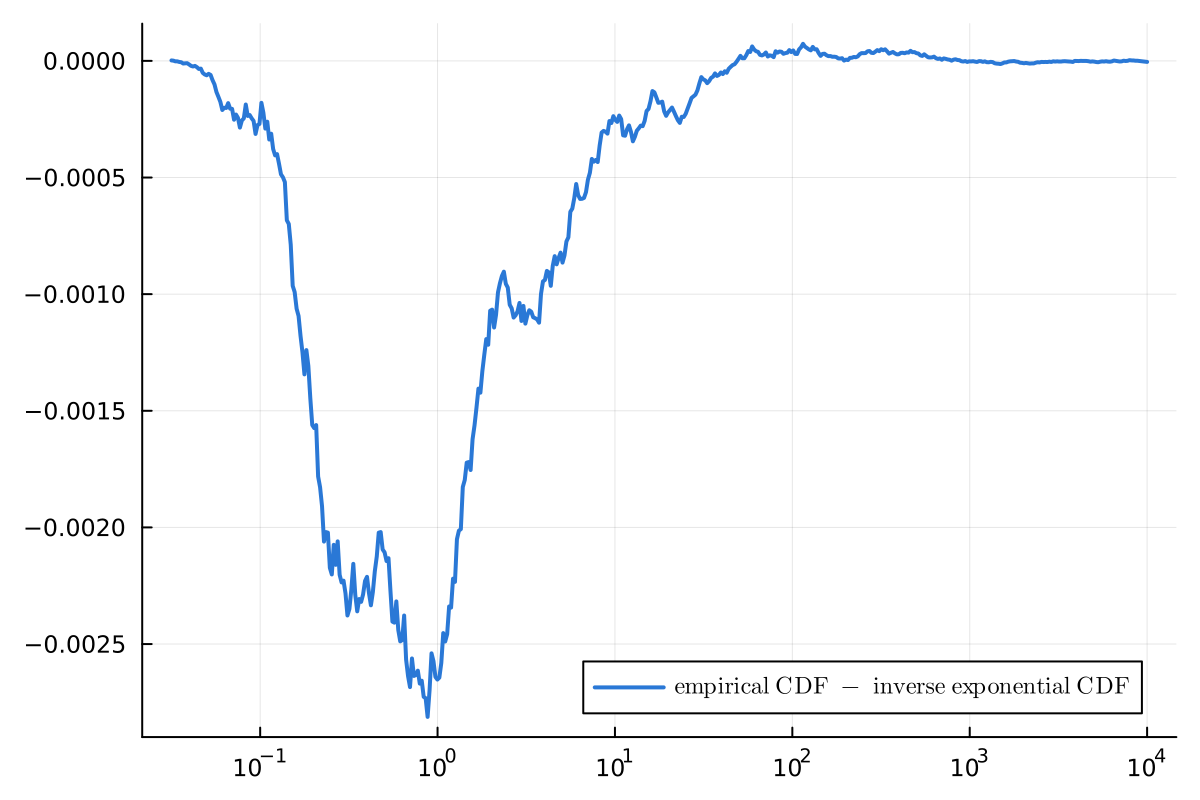}
    \caption{}
  \end{subfigure}
 \caption{We compare the empirical distribution of one million samples of $n^{-\frac32} \growth(G)$, the scaled growth factor of a Gaussian matrix without pivoting, for $n = 64$, to the distribution of an inverse exponential with rate $2/(3\sqrt{\pi})$. In Figure (A), we compare their distributions directly, which are indistinguishable to the human eye. In Figure (B), we plot the difference. The KS distance is $2.8\times 10^{-3}$, illustrating the rapid practical convergence that occurs even for small $n$. Unlike the scaled growth factor of a Gaussian matrix under partial pivoting, $n^{-\frac12}\growth(\Pi(G))$, which we prove is $n^{o(1)}$ with overwhelming probability, $n^{-\frac32} \growth(G)$ is $n^{\Theta(1)}$ with probability $n^{-O(1)}$. The empirical difference in tail distribution between no pivoting and partial pivoting is made readily apparent by comparing the above Figure (A) to Figure \ref{fig:partial_numerics}}
\label{fig:cdfs}
\end{figure}

Numerical experiments suggest that this convergence occurs quite rapidly, even for small values of $n$; see Figure \ref{fig:cdfs}. While Theorems \ref{thm:orthog} and \ref{thm:gauss_nopivot} are fairly straightforward to prove, the results themselves provide further context for the situation regarding partial pivoting, the main focus of this work. In particular, we note that orthogonal preconditioning, a reasonable alternative to partial pivoting, produces a larger growth factor than one expects to see with partial pivoting when applied to a Gaussian matrix. Furthermore, the tails of the bounds decay inverse polynomially, which is slower than one expects for partial pivoting. The following theorem, our main result, proves that the growth factor of a Gaussian matrix is at most $n^{\frac12 + o(1)}$ with overwhelming probability, that is, $1- n^{-\alpha}$ for any $\alpha$.

\begin{theorem}\label{thm:gauss_pp}
For every $\alpha >0$, there exist constants $C_\alpha,n_\alpha$ such that
\[\P\left[\growth(\Pi( G)) > \sqrt{n} \, e^{C_\alpha \sqrt{\log n}}\right] < n^{-\alpha}\qquad \text{for all} \quad n \ge n_\alpha,\]
where $G \in \R^{n \times n}$ is a Gaussian matrix and $\Pi(\cdot)$ is the partial pivoting map.
\end{theorem}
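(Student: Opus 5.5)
Since $\|L\|_{\max}\le 1$ under partial pivoting, only $\|U\|_{\max}/\|G\|_{\max}$ needs to be controlled. With overwhelming probability $\|G\|_{\max}\ge c\sqrt{\log n}$, so it suffices to show $\|U\|_{\max}\le \sqrt n\, e^{C\sqrt{\log n}}$. Each entry of $U$ is an entry of the first row of some Schur complement $S_k=(\Pi(G))/(\Pi(G))_{[k],[k]}$.

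\textbf{Step 1: reduce to pivot sizes.} Under partial pivoting every multiplier is at most $1$ in magnitude. Conditionally on the pivot rows, the row chosen at step $k$ has entries that are Gaussian in the unexplored directions. So I would first show that with overwhelming probability $\|\bm e_1^T S_k\|_\infty\lesssim \sqrt{\log n}\,\bigl(|(S_k)_{11}|+\sqrt{n}\bigr)$ for every $k$. The claim is that, after conditioning on the selection, the remaining entries of the pivot row behave like Gaussians whose variance is controlled by the squared distance from the row to the span of earlier pivot rows.

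The cleaner route is the geometric identity $|U_{kk}|=\mathrm{dist}(\text{row}_k,\mathrm{span of previous pivot rows}^\perp\text{ restricted})$, or dually via column spaces. This identity lets the whole problem be rephrased in terms of $|U_{kk}|$ and the rotational invariance of Gaussian columns.

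\textbf{Step 2: the key quantity.} By invariance of the unexplored columns $k+1,\dots,n$, it is the first $k$ columns alone that determine the pivoting of the first $k$ steps. Conditioned on those columns and the induced permutation, the remaining columns are still Gaussian. Then $S_k$ equals $B-L_{21}L_{11}^{-1}C$ in distribution, where $B$ and $C$ are Gaussian. Its entries are therefore Gaussian with variance $1+\|\bm e_i^T L_{21}L_{11}^{-1}\|_2^2$.

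The task is thus to bound $\|L_{21}L_{11}^{-1}\|$ row by row, i.e.\ the $\ell_2$ norm of rows of $L^{-1}$-type quantities. The target is $\max_i\|\bm e_i^T L_{21}L_{11}^{-1}\|_2\le \sqrt n\,e^{C\sqrt{\log n}}$ with overwhelming probability. A union bound over $k$ and $i$ with Gaussian tails then finishes the argument.

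\textbf{Step 3: the main obstacle.} Bounding rows of $L^{-1}$ when $L$ is the unit lower-triangular factor produced by partial pivoting is the hard part. The entries of $L$ are bounded by $1$, but their signs are correlated with the pivot choices, so the naive bound is exponential. My plan is a multiscale or bootstrapping argument. I would split the elimination into blocks of length $m=n e^{-\sqrt{\log n}}$-type scales. At each scale I would show that the norm of the relevant row of $L^{-1}$ grows by at most a factor $e^{O(1)}$ relative to $\sqrt{\text{block ratio}}$. Over $O(\sqrt{\log n})$ scales this accumulates to $e^{C\sqrt{\log n}}$ on top of the $\sqrt n$.

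Within a block, I would use that the pivots $|(S_k)_{11}|$ are maxima of roughly $n-k$ conditionally Gaussian entries. These entries have variance comparable to the current row-norm scale, so they are not too small: a small-ball estimate gives this with overwhelming probability. Combining this with the independence supplied by the rotational invariance of fresh columns should control the growth. I expect the delicate point to be decoupling the pivot selection (a max over rows) from the future columns, and handling the last $O(e^{C\sqrt{\log n}})$ steps, where few rows remain. There I would fall back on the crude doubling bound $2^{j}$ over the final $j=O(\sqrt{\log n})$ steps, which only costs $e^{O(\sqrt{\log n})}$.
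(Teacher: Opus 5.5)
Your Steps 1--2 are a correct reduction and match the paper's setup. Conditionally on the first $k$ columns, the entries of the $k$th Schur complement are centered Gaussians with variance $1+\|\bm e_i^T L_{21}L_{11}^{-1}\|_2^2$. Moreover $L_{21}L_{11}^{-1}=G_{\pi_k^c,[k]}T_k^{-1}$ (up to row order) is exactly the nontrivial block of the paper's elimination matrix $E_k$. (The inequality in Step 1 involving $|(S_k)_{11}|+\sqrt n$ is unjustified, but you do not need it.)

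The gap is Step 3, which is the whole content of the theorem and contains no working mechanism. A lower bound on the pivot $|(S_k)_{11}|$ only restates $|\ell_i|\le 1$, which you already have. Row $i$ of $E_{k+1}$ is row $i$ of $E_k$ minus $\ell_i$ times the pivot row, so pivot size alone gives nothing better than doubling. Beating doubling needs a cancellation mechanism, and you never identify one.

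The real obstruction is the pivot block $T_k=G_{\pi_k,[k]}$. Its rows are selected adaptively from the revealed columns, so rotational invariance of the fresh columns says nothing about it. (Independence from the unrevealed columns is automatic and is not the delicate point.) Its inverse is also heavy-tailed in a few directions: $T_n$ is a row permutation of $G$, $\frac1n\|G^{-1}\|_F^2$ has infinite mean, and $\P[\sigma_{\min}(G)\le \delta/\sqrt n]\asymp\delta$. So any scheme that propagates control of $T_k^{-1}$ (or of $L_{11}^{-1}$ through it) with an $e^{O(1)}$ loss per scale cannot hold with probability $1-n^{-\alpha}$. It would have to explain why those few bad directions do not contaminate the row norms.

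Your endgame is also inconsistent. You propose doubling for the last $e^{C\sqrt{\log n}}$ steps, but doubling over that many steps costs $2^{e^{C\sqrt{\log n}}}$. It is affordable only over $O(\sqrt{\log n})$ steps.

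The paper supplies exactly these missing pieces, in three steps.
\begin{enumerate}
\item It controls only the truncated quantity $\Sigma_r(T_t)$, via lower bounds $\sigma_{t-d}(T_t)\ge g(d)$ for $d\ge\lceil\sqrt{\log n}\rceil$ (Proposition \ref{prop:truncated_bound}). Adaptivity is handled in Lemma \ref{lm:extension} by a union bound over all at most $n^{4k}$ candidate sets $S$ of next pivot rows. This is legitimate because deleting unselected rows does not change the earlier pivots. The $n^{4k}$ cost is paid for by the small-ball bound $n^{j+1}\theta^{j^2/4}$ of Lemma \ref{lm:gauss_sing_val} for many simultaneously small singular values. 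That is precisely why only $\sigma_{t-d}$ with $d\gtrsim\sqrt{\log n}$ can be controlled.
\item It writes $E_t=X_t+Y_tQ_t^T$. The rows of $X_t$ are bounded through $\Sigma_r(T_t)$ and log-concavity of the conditioned non-pivot rows (Lemmas \ref{lm:gauss_convex} and \ref{lm:control}). The remaining part $Y_tQ_t^T$ has rank at most $r$.
\item It shows that $4r$ further steps against fresh columns $G_\star$ remove the uncontrolled part exactly: $E_{t+4r}=J\bigl(X_t-X_tG_\star(Q_t^TG_\star)^\dagger Q_t^T\bigr)$. Here the rows of $J$ have $\ell_1$-norm at most $2^{4r}$, which is where $|\ell|\le1$ is really used, and $\|(Q_t^TG_\star)^\dagger\|_2^2\lesssim r^{-1}e^{C\log n/r}$ (Lemma \ref{lm:fix}).
\end{enumerate}
The factor $e^{C\sqrt{\log n}}$ then comes from the trade-off $r+\log n/r$ at $r=\lceil\sqrt{\log n}\rceil$, not from counting $\sqrt{\log n}$ scales. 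Without some analogue of this truncation and low-rank annihilation, your multiscale induction has no estimate to propagate.
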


Theorem \ref{thm:gauss_pp} implies Trefethen's Conjecture \ref{conj:trefethen}, as $e^{\sqrt{\log n}} = o(n^c)$ for all $c>0$. The $\frac12$ exponent in the theorem is correct, as $\log \|G\|_{\max} = o(\log n)$ and, by \cite{nguyen2014random}, $\log \| U\|_{\max} \ge \log (|\det(G)|^{\frac1n}) \ge \frac12 \log n - O(1)$ with probability $1-o(1)$. This, combined with Theorem \ref{thm:gauss_pp}, implies the following.

\begin{corollary}
Let $G \in \R^{n\times n}$ be a Gaussian matrix and $\Pi(\cdot)$ be the partial pivoting map. Then
\[\frac{\log \growth(\Pi(G))}{\log n} \overset{p}{\longrightarrow} \frac12.\]
\end{corollary}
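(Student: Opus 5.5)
We need to show that for every $\varepsilon>0$,
\[\P\left[\left|\frac{\log \growth(\Pi(G))}{\log n} - \frac12\right| > \varepsilon\right] \to 0 .\]
The plan is to treat the upper and lower tails separately. The upper tail comes directly from Theorem \ref{thm:gauss_pp}. The lower tail comes from comparing $\|U\|_{\max}$ with the determinant and $\|G\|_{\max}$ with a union bound, as sketched just before the statement.

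\emph{Upper tail.} Fix $\varepsilon>0$ and apply Theorem \ref{thm:gauss_pp} with, say, $\alpha = 1$. With probability at least $1-n^{-1}$ we have
\[\log \growth(\Pi(G)) \le \tfrac12\log n + C_1\sqrt{\log n}.\]
Since $C_1\sqrt{\log n} \le \varepsilon \log n$ for $n$ large, this gives $\P[\log\growth(\Pi(G)) > (\tfrac12+\varepsilon)\log n] \le n^{-1} \to 0$.

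\emph{Lower tail.} We have $\growth(\Pi(G)) \ge \|U\|_{\max}/\|G\|_{\max}$, where $\Pi(G)=LU$. The permutation does not change $\|G\|_{\max}$ or $|\det G|$.

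First, $\det(\Pi(G)) = \pm \det G$ and $\det(\Pi(G)) = \prod_k U_{kk}$. Hence $\|U\|_{\max} \ge \max_k |U_{kk}| \ge |\det G|^{1/n}$.

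Second, by the result of \cite{nguyen2014random} (the central limit theorem for $\log|\det G|$), we have $\log|\det G| = \tfrac12\log n! + O_p(\sqrt{\log n})$. Since $\tfrac1n \log n! = \log n - 1 + o(1)$, this gives $\tfrac1n\log|\det G| \ge \tfrac12\log n - O(1)$ with probability $1-o(1)$.

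Third, a union bound over the $n^2$ standard normal entries gives $\|G\|_{\max} \le 2\sqrt{\log n}$ with probability at least $1 - n^2 e^{-2\log n} \cdot O(1) \to 1$. So $\log\|G\|_{\max} = O(\log\log n)$.

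Combining the three facts, with probability $1-o(1)$,
\[\log\growth(\Pi(G)) \ge \tfrac12\log n - O(1) - O(\log\log n) \ge (\tfrac12-\varepsilon)\log n\]
for large $n$.

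The two tails together give convergence in probability to $\tfrac12$. The only nontrivial ingredient beyond Theorem \ref{thm:gauss_pp} is the determinant lower bound. If one prefers not to cite \cite{nguyen2014random}, there is a self-contained alternative: $|\det G|^2$ is distributed as $\prod_{k=1}^n \chi^2_k$ (independent chi-squares, via Gram--Schmidt). Chebyshev's inequality applied to $\sum_k \log \chi^2_k$ then yields the same bound, and this is the step needing the most care.
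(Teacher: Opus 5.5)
Your proof is correct and follows the paper's argument: Theorem~\ref{thm:gauss_pp} gives the upper tail, and the lower tail comes from $\|U\|_{\max}\ge|\det G|^{1/n}$, the determinant law of Nguyen--Vu, and $\log\|G\|_{\max}=o(\log n)$. One small slip is in your bound on $\|G\|_{\max}$: at the threshold $2\sqrt{\log n}$ the crude union bound gives $n^2\cdot 2e^{-2\log n}=2$, not $o(1)$, so either keep the $1/h$ Mills-ratio factor (which gives $O(1/\sqrt{\log n})$) or simply use the threshold $3\sqrt{\log n}$, since only $\log\|G\|_{\max}=o(\log n)$ is needed.
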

Numerical experiments suggest some some further refinements. While the evidence is not overwhelming, empirically, the fluctuations of $n^{-\frac12} \growth(G)$, properly shifted and normalized, seem to resemble Gumbel distributions. This would not be too surprising, as these distributions often appear when taking the maximum of some number of random variables. Somewhat somewhat surprisingly, as $n$ increases, the empirical density of $n^{-\frac12} \growth(G)$ appears to narrow, leaving open the possibility that $n^{-\frac12} \growth(G)$ converges to a constant. See Figure \ref{fig:partial_numerics} for details.

The proof of Theorem \ref{thm:gauss_pp}, suitably adjusted, also provides a smoothed analysis of partial pivoting.

\begin{figure}[t!]
\centering
  \begin{subfigure}[t]{0.49\textwidth}
    \centering
    \includegraphics[width =.9\textwidth]{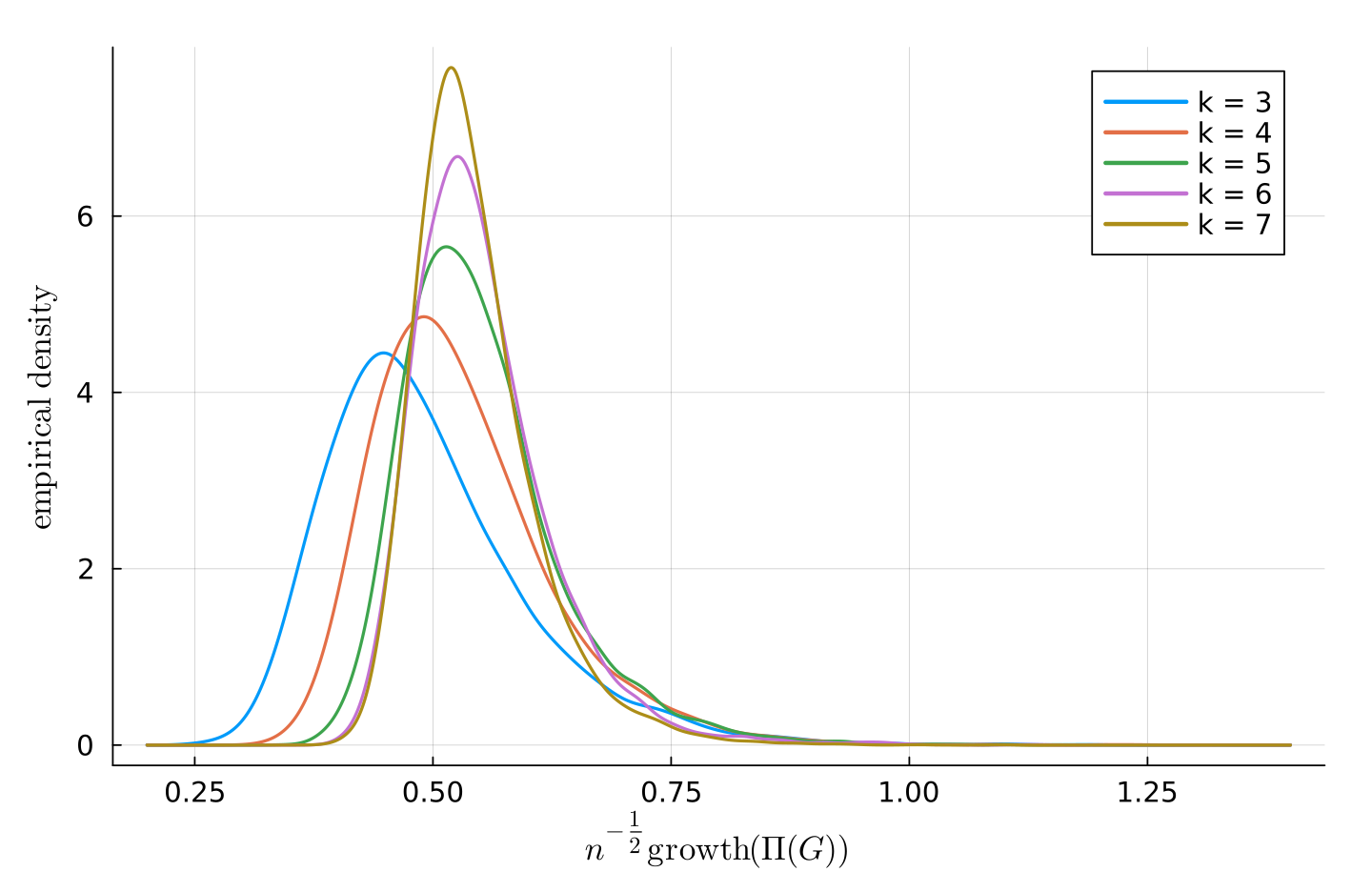}\vspace{-1mm}
    \caption{}
  \end{subfigure}\hfill
  \begin{subfigure}[t]{0.49\textwidth}
    \centering
\includegraphics[width =.9\textwidth]{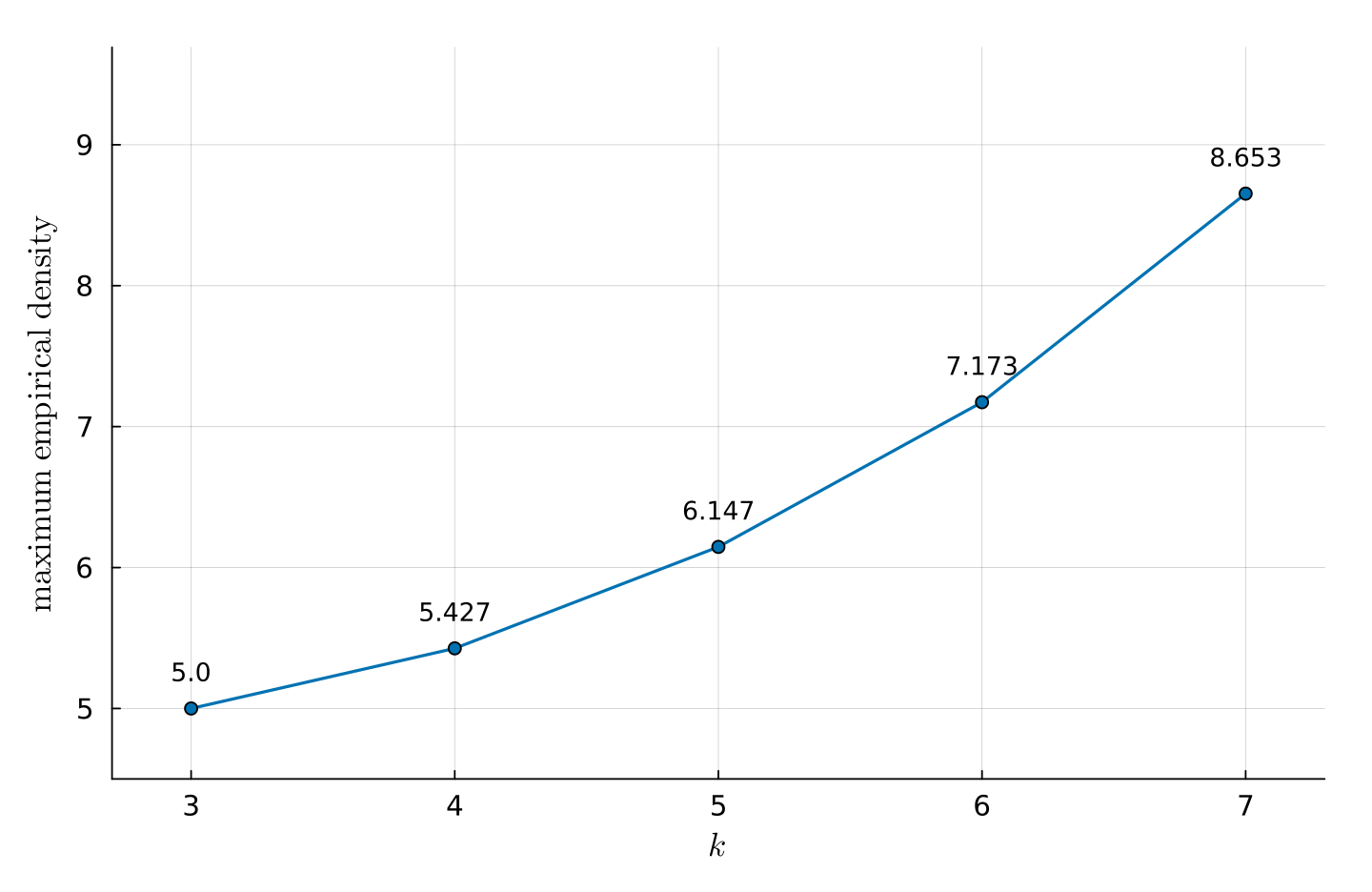}\vspace{-1mm}
    \caption{}
  \end{subfigure} \\
    \begin{subfigure}[t]{0.49\textwidth}
    \centering
    \includegraphics[width =.9\textwidth]{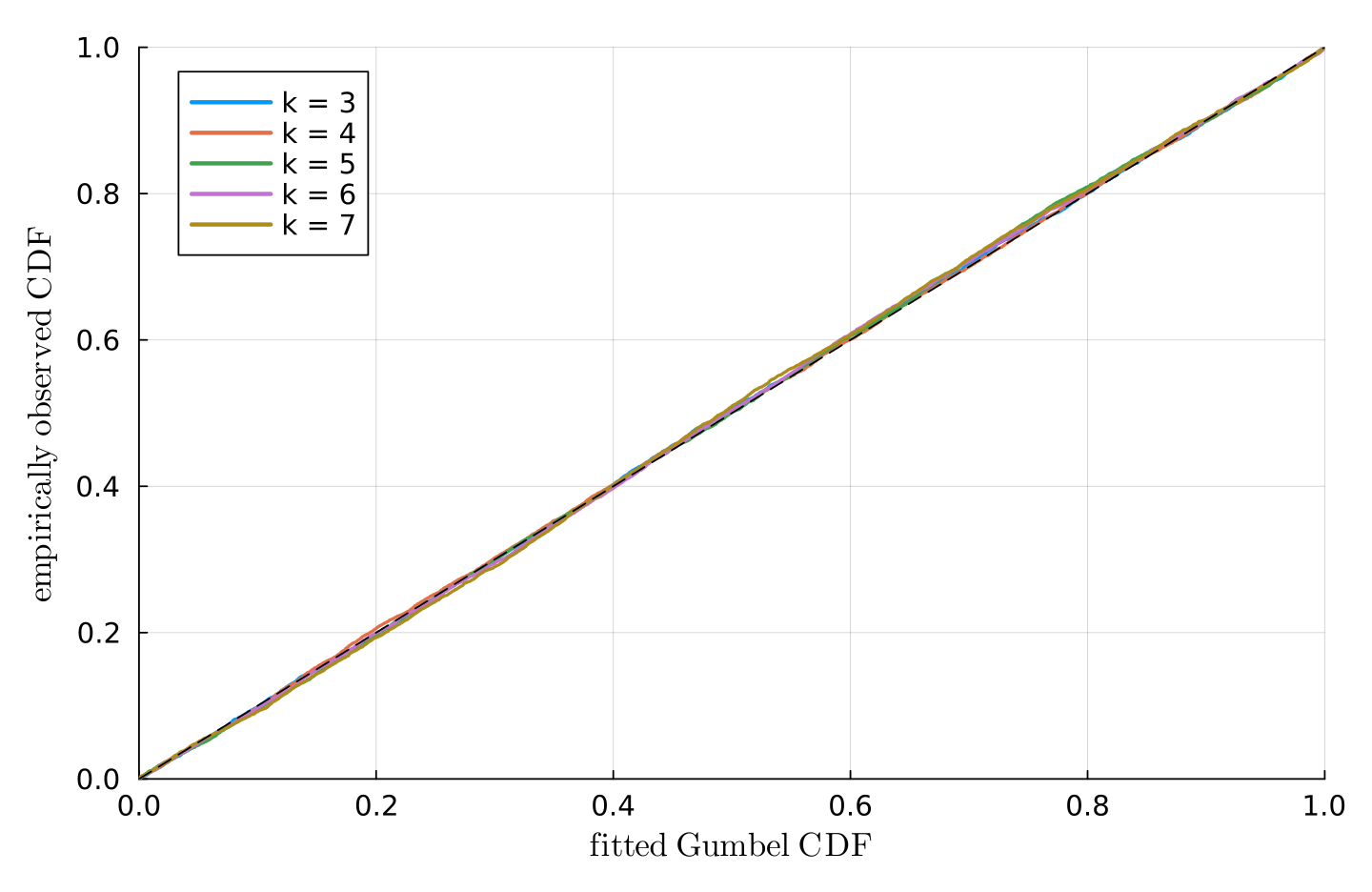}\vspace{-1mm}
    \caption{}
  \end{subfigure}\hfill
  \begin{subfigure}[t]{0.49\textwidth}
    \centering
\includegraphics[width =.9\textwidth]{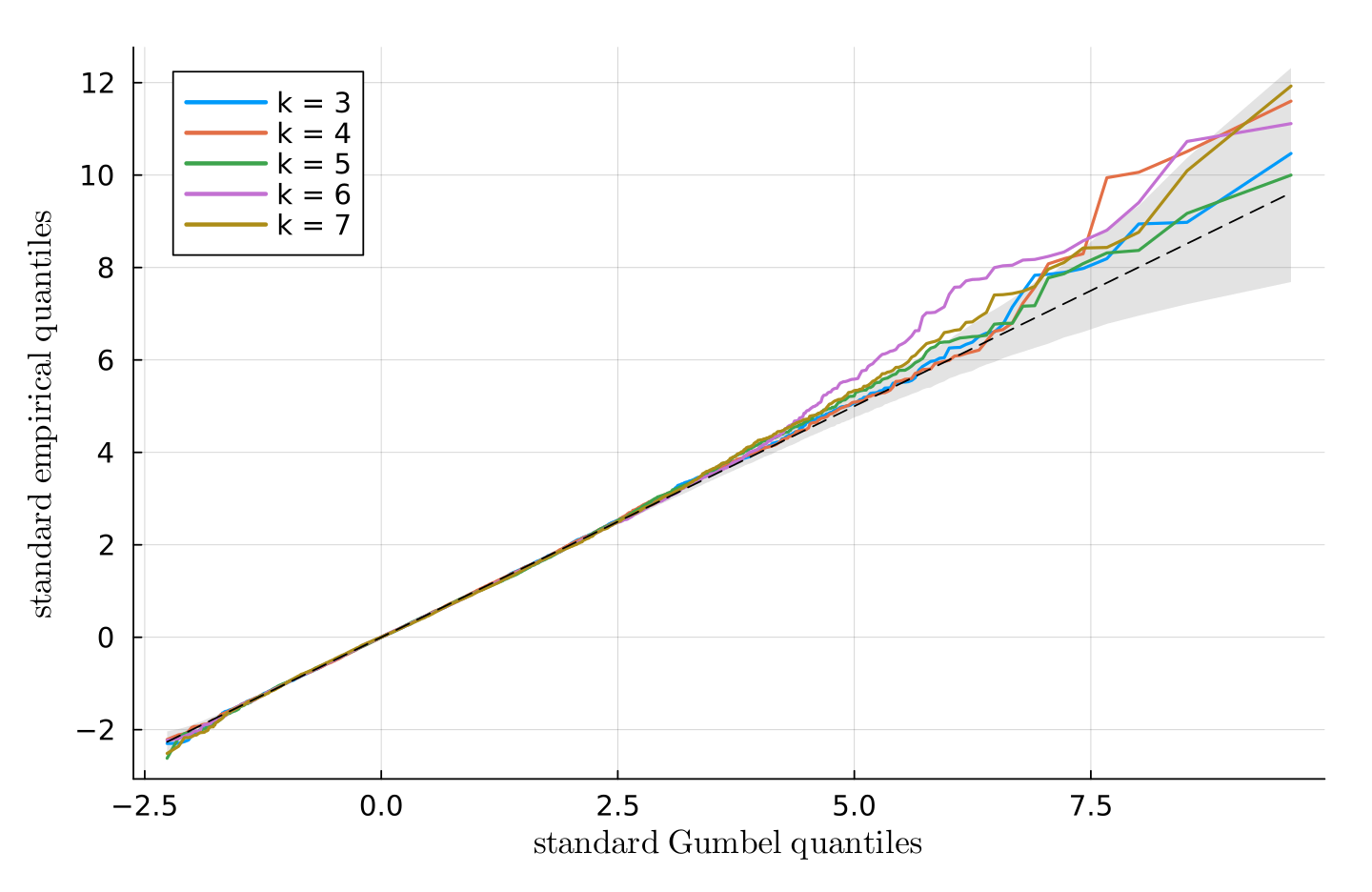}\vspace{-1mm}
    \caption{}
  \end{subfigure}
 \caption{We compute $7500$ samples of $n^{-\frac12}\growth(\Pi(G))$ for $n = 4^k$, $k = 3,\ldots,7$. In Figure (A), we plot their empirical densities. Note that the tails decay rapidly and the densities appear to narrow somewhat. In Figure (B), for each value of $k$, we plot the maximum empirical density in a region of length $1/100$, further illustrating this narrowing. The empirical densities look somewhat similar to Gumbel distributions. We fit each value of $k$ individually, and, after proper shifting and scaling, compare them to their fitted counterparts. In Figures (C) and (D), we provide a P-P plot and Q-Q plot, respectively. The grey region in Figure (D) notes the area that the empirical quantiles of $7500$ Gumbell samples would lie in, with $95\%$ likelihood.}
\label{fig:partial_numerics}
\end{figure}

\begin{theorem}\label{thm:smooth_pp}
For every $\alpha >0$, there exist constants $C_\alpha, n_\alpha$ such that, for all $n \ge n_\alpha$, every matrix $A \in \R^{n\times n}$ with $\|A\|_2 \le 1$ and every $\epsilon \in (0,1)$ satisfy
\[\P\left[ \growth(\Pi(A+\epsilon G)) >  n^{C_\alpha}\epsilon^{-\frac{2}{\log2} \log \log n} \right] < n^{-\alpha},\]
where $G \in \R^{n \times n}$ is a Gaussian matrix and $\Pi(\cdot)$ is the partial pivoting map.
\end{theorem}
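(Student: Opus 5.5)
We plan to follow the proof of Theorem \ref{thm:gauss_pp}, replacing the rotation-invariance arguments by anti-concentration estimates valid for Gaussian perturbations with arbitrary mean. The starting point is the standard representation of the entries of $U$ under partial pivoting: the $(k+1)$-st row of $U$ is the first row of the Schur complement $(PA)/(PA)_{[k],[k]}$. Its entries have the form $a_{ij} - \bm a_{i,S}^T (A_{R,S})^{-1} \bm a_{R,j}$, where $R$ is the set of $k$ rows selected by the pivoting rule and $S=[k]$. The bound $\|L\|_{\max}\le 1$ is automatic under partial pivoting, so it suffices to bound $\|U\|_{\max}/\|A+\epsilon G\|_{\max}$. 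We also have $\|A+\epsilon G\|_{\max}\gtrsim \epsilon/\sqrt{\log n}$ with overwhelming probability, so we only need an upper bound on $\|U\|_{\max}$ of the form $n^{C}\epsilon^{-\frac{2}{\log 2}\log\log n+1}$.

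The core idea is a multiscale doubling argument. The trivial bound shows that growth at most doubles per step, which gives $2^{n}$. We would instead compare the Schur complement after $k$ steps with the one after $k/2$ steps, i.e.\ relate the pivot block at levels $k$ and $2k$. We intend to prove a lemma of the following form. Conditional on the pivot rows chosen in the first $k$ steps, the Schur complement restricted to the remaining columns has entries bounded by $n^{C}\epsilon^{-2}$ times the entries at level $k$, with failure probability $n^{-\alpha-2}$. The mechanism is that the multipliers in the block $L_{R',R}$ are controlled by $\|(A_{R,S})^{-1}\|$ acting on Gaussian-perturbed columns. By the Sankar--Spielman--Teng smallest singular value bound, $\|(A_{R,S}+\epsilon G_{R,S})^{-1}\|_2\lesssim \sqrt{k}/(\epsilon\delta)$ with probability $1-\delta$. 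This holds for fixed $R$, and a union bound over the $\binom{n}{k}$ choices of $R$ is too costly. We therefore plan to use the fact that $R$ is determined only by the first $k$ columns, while the entries in columns $j>k$ remain independent Gaussians conditioned on them. Only the one inverse whose row set is determined adaptively needs a singular-value estimate, and we get it by conditioning column-by-column as in Sankar's argument. The lost factor $\epsilon^{-2}$ per dyadic level comes from two inverse-norm factors: one from the block inverse, and one from comparing against the norm of the perturbation.

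Iterating over dyadic scales $k=1,2,4,\dots,2^{\log_2 n}$ gives $\log_2 n$ levels. The naive product would then be $\epsilon^{-2\log_2 n}$, which is too large. The target exponent $\frac{2}{\log 2}\log\log n = 2\log_2\log n$ shows that the real doubling must occur in the logarithm of the scale. We would therefore compare level $k$ with level $k^2$, or more precisely use scales $k_j = n^{2^{-j}}$. At scales $k \ge k_0$, where the perturbation dominates, the Gaussian part provides polynomial control directly, as in Theorem \ref{thm:gauss_pp}. The number of such squaring levels between $n^{1/\log n}$ and $n$ is $\log_2\log n$. Each level costs $n^{O(1/\text{level})}\epsilon^{-2}$, and the polynomial factors sum geometrically to $n^{C_\alpha}$. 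The small initial scales $k\le O(1)$ cost only $2^{O(1)}$ by the trivial bound.

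The main obstacle is the per-level comparison lemma. It must give a loss of only $\epsilon^{-2}$ when the scale is squared, rather than when it is doubled, with failure probability $n^{-\alpha-1}$ robust to the adaptive choice of pivot rows. The pivoting rule correlates which rows are selected with the values of the Gaussian perturbation. For this we would try first to import the corresponding lemma from the proof of Theorem \ref{thm:gauss_pp}, with the Gaussian matrix replaced by $A+\epsilon G$. Where that proof uses rotation invariance to compute distributions of Schur complement entries exactly, we would substitute the bound $\P[|\langle \bm v, \bm g\rangle + c|<t]\le t/\|\bm v\|_2$ for arbitrary $c$. This bound holds uniformly in the mean, and each substitution costs a factor $\epsilon^{-1}$; we expect two such substitutions per level. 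The exponent of $\epsilon$ in the final bound should then be exactly twice the number of levels.
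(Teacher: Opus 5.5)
Your reductions ($\|L\|_{\max}\le 1$, $\|A+\epsilon G\|_{\max}\gtrsim\epsilon$) are fine, but there is a genuine gap. The per-level comparison lemma, which you yourself flag as the main obstacle, carries the entire content of the theorem, and the route you sketch for it cannot give the cost you claim.

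The tools you name have tails only linear in the failure probability: the Sankar--Spielman--Teng bound $\|(A_{R,S}+\epsilon G_{R,S})^{-1}\|_2\lesssim\sqrt{k}/(\epsilon\delta)$ with probability $1-\delta$, and the anti-concentration bound $\P[|\langle\bm v,\bm g\rangle+c|<t]\le t/\|\bm v\|_2$. To make a level fail with probability $n^{-\alpha-2}$ you must take $\delta\approx n^{-\alpha-2}$. Each use then costs a factor $n^{\alpha+2}$, not $n^{O(1/\mathrm{level})}$, and over $\log_2\log n$ levels you get $n^{\Omega(\alpha\log\log n)}$ rather than $n^{C_\alpha}$.

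This is not an artifact of the tools. The smallest singular values of pivot blocks genuinely have heavy tails: already for $\Delta=0$, $\frac1n\|G^{-1}\|_F^2$ has infinite mean. So any argument that pays for $\|T_t^{-1}\|_2$ multiplicatively at each level fails. The paper never controls the bottom few singular values. It controls only $\Sigma_r(T_t)$, i.e.\ $\sigma_{t-d}(T_t)$ for $d\ge r$, whose failure probabilities $n^{j+1}\theta^{j^2/4}$ (Lemma \ref{lm:gauss_sing_val}) are superpolynomially small. It then disposes of the remaining rank-$r$ part of the elimination matrix via the decomposition $E_t=X_t+Y_tQ_t^T$ (Lemma \ref{lm:control}) and $4r$ further pivoting steps whose fresh Gaussian columns eliminate the $Y_t$ term (Lemma \ref{lm:fix}). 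This costs $2^{4r}(1+K)^3$, which is $n^{O(1)}\epsilon^{-3}$ for $r=\lceil e^4\lambda\rceil$.

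Second, the squaring of scales is read off from the target exponent rather than produced by a mechanism. Nothing in your sketch explains why the Schur complement after $k^2$ steps should be within $\epsilon^{-2}$ (times a small power of $n$) of the one after $k$ steps. There are $k^2-k$ intervening greedy steps, each of which can double entries, and conditioning on the adaptive pivot choices destroys the Gaussianity of exactly the rows you need. Column-by-column conditioning as in Sankar's argument is what yields the old $(n/\epsilon)^{O(\log n)}$ bound, precisely because it only supports doubling the scale per level.

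In the paper, what gets squared is not elimination time but the number $d$ of discarded small singular values. Lemma \ref{lm:extension} extends a pivot block by $4k$ rows and columns. The adaptive choice of the next $4k$ pivot rows is handled by a union bound over all $n^{4k}$ candidate sets $S$; this is legitimate because deleting rows that are not among the first $m$ pivots does not change $\pi_m$. That union bound is paid for by the anti-concentration $\theta^{d^2/16}$, so the per-stage loss is $e^{O(1+k\lambda/d^2)}$. Taking $k\approx d^2/\lambda$ keeps it bounded, which gives the recursion $d\mapsto\max(100d,\lceil d^2/\lambda\rceil)$.

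The mean enters only through Lemma \ref{lm:gauss_extension}, as a factor $1+K^2/d_i\le 2\epsilon^{-2}$ per stage with $K=\|A\|_2/\epsilon$. Starting from $d_0=r\approx e^4\lambda$ there are fewer than $\log_2\lambda$ stages, which is where $\epsilon^{-\frac{2}{\log 2}\log\log n}$ comes from. Finally, the non-pivot rows, conditioned on the pivoting history, are handled by Lemma \ref{lm:gauss_convex} (a shifted Gaussian conditioned on a symmetric convex set). Your plan has no substitute for this ingredient either.
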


This shows that polynomially small perturbations produce $n^{O(\log \log n)}$ growth. While the correct behavior is likely $n^{O(1)}$, this is a significant improvement over the longstanding $n^{O(\log n)}$ bound.

\subsection{Remainder of Paper} In Section \ref{sec:orthogonal}, we prove that a Haar orthogonally preconditioned matrix rarely has growth much larger than $n^{\frac32}$ (Theorem \ref{thm:orthog}). In Section \ref{sec:gauss}, we prove that the distribution of the growth of a Gaussian matrix without pivoting, divided by $n^{\frac32}$, converges to an inverse exponential distribution (Theorem \ref{thm:gauss_nopivot}). In Section \ref{sec:technical}, we prove six technical lemmas regarding Gaussian matrices and singular values, which we use in Sections \ref{sec:partial} and \ref{sec:smooth}. In Section \ref{sec:partial}, we prove that the growth factor of a Gaussian matrix under partial pivoting is rarely much more than $n^{\frac12}$ (Theorem \ref{thm:gauss_pp}). In Section \ref{sec:smooth}, we prove that any matrix, of norm at most one, plus an $\epsilon$ sized Gaussian perturbation rarely has growth much larger than $n^{O(1)} \epsilon^{-O(\log \log n)}$ (Theorem \ref{thm:smooth_pp}). The proof of Theorem \ref{thm:smooth_pp} is a consequence of a more general version of Theorem \ref{thm:gauss_pp} (Theorem \ref{thm:smooth_general}). However, for clarity, we present the slightly simpler proof of the Gaussian case first in Section \ref{sec:partial}. Then, in Section \ref{sec:smooth}, we present a generalization of the same argument, with far greater brevity.

\section{Growth of Orthogonally Preconditioned Matrices without Pivoting}\label{sec:orthogonal}

Here we prove that the growth factor of $QA$, where $A \in \mathrm{GL}_n(\R)$ is fixed and $Q \sim \mathrm{Haar}(\mathrm{O}(n))$, is usually not much bigger than $n^{\frac32}$ (Theorem \ref{thm:orthog}).  Our main technical tool is the following lemma regarding the inverse of a submatrix of a Haar orthogonal matrix.

\begin{lemma}\label{lm:orthog}
Let $Q \sim \mathrm{Haar}(\mathrm{O}(n))$, $S,T\subset[n]$ be fixed subsets of size $m$, and $\bm x \in \mathbb{S}^{m-1}(\R)$ and $\bm y_1, \bm y_2, \ldots, \bm y_k \in \mathbb{S}^{m-1}(\R)$ be fixed for some $k \le m$. Then
\[\P \left[ \max_{j \in [k]} |\bm y_j^T (Q_{S,T})^{-1} \bm x | >t \right] \le \frac{6
}{t} \sqrt{\frac{(n-m+1)\log(ek)}{m}} \qquad \text{for all} \quad t >0.
\]
\end{lemma}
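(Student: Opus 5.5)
The plan is to split $\bm y_j^T (Q_{S,T})^{-1}\bm x$ into a radial factor and an angular factor using the invariance of Haar measure, and to bound each factor separately.

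\emph{Step 1 (separating norm and direction).} Let $R \sim \mathrm{Haar}(\mathrm{O}(m))$ be independent of $Q$. Embed $R$ as an orthogonal $n \times n$ matrix acting on the coordinates in $T$ and fixing the rest. Right-multiplying $Q$ by this matrix preserves Haar measure and replaces $Q_{S,T}$ by $Q_{S,T}R$. Hence $(Q_{S,T})^{-1}\bm x \overset{d}{=} R^T (Q_{S,T})^{-1}\bm x$. Write $\bm v = (Q_{S,T})^{-1}\bm x$. Since $R$ is independent of $\bm v$, we have $R^T\bm v \overset{d}{=} \|\bm v\|\,\bm u$, where $\bm u$ is uniform on $\mathbb{S}^{m-1}$ and independent of $\|\bm v\|$. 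Therefore
\[\max_{j\in[k]} |\bm y_j^T (Q_{S,T})^{-1}\bm x| \overset{d}{=} \|\bm v\| \cdot M, \qquad M := \max_{j \in [k]} |\bm y_j^T \bm u|,\]
with $\|\bm v\|$ and $M$ independent.

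\emph{Step 2 (the norm $\|\bm v\|$).} By left invariance (rotating the rows indexed by $S$), we may take $\bm x = \bm e_1$ and $S = T = [m]$. Then $\bm v$ is the first column of $(Q_{S,T})^{-1}$, so it is orthogonal to the restricted rows $\bm r_2,\dots,\bm r_m \in \R^m$ and satisfies $\bm r_1^T\bm v = 1$. Consequently
\[\|\bm v\| = \frac{1}{|\langle \bm r_1, \bm w\rangle|},\]
where $\bm w$ is a unit vector in $\R^m$ orthogonal to $\bm r_2,\dots,\bm r_m$.

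Now lift $\bm w$ to $\bm w' \in \R^n$ by padding with zeros. Then $\bm w'$ is orthogonal to the full rows $\bm q_2,\dots,\bm q_m$ of $Q$, so $\bm w'$ lies in $V = \mathrm{span}(\bm q_2,\dots,\bm q_m)^\perp$, which has dimension $n-m+1$. Conditionally on $\bm q_2,\dots,\bm q_m$, the row $\bm q_1$ is uniform on the unit sphere of $V$. Since $\bm w'$ is a unit vector in $V$ that is measurable with respect to the conditioning, $\langle \bm q_1,\bm w'\rangle$ is distributed as the first coordinate $u_1$ of a uniform point on $\mathbb{S}^{n-m}$.

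The density of $u_1$ is bounded by roughly $\sqrt{(n-m+1)/(2\pi)}$; a crude constant such as $1$ times $\sqrt{n-m+1}$ will do. This gives
\[\P[\|\bm v\| > s] = \P\left[|u_1| < \tfrac1s\right] \le \frac{2\sqrt{n-m+1}}{s}\cdot c \qquad \text{for an explicit } c \le 1.\]

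\emph{Step 3 (the max of projections, and combining).} Bound $\E M$. Each $\bm y_j^T\bm u$ has the law of $g_j/\|\bm g\|$ with $\bm g$ Gaussian in $\R^m$. Using the standard Gaussian maximal inequality $\E\max_j |g_j| \le \sqrt{2\log(2k)}$, together with concentration of $\|\bm g\|$ near $\sqrt m$ (or directly the sub-Gaussian tail of spherical coordinates with variance proxy $1/m$), we get $\E M \le C\sqrt{\log(ek)/m}$.

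Conditioning on $M$ and applying Step 2 then gives
\[\P[\|\bm v\| M > t] \le \E\left[ \frac{2c\sqrt{n-m+1}\,M}{t}\right] \le \frac{C'}{t}\sqrt{\frac{(n-m+1)\log(ek)}{m}}.\]
The main care needed is in tracking constants so that $C' \le 6$. This requires a clean bound on the spherical-coordinate density in Step 2 and on $\E M$ in Step 3; the latter is the fiddliest part, especially for small $m$ or $k$. The structural heart of the argument is the observation in Step 2 that $\bm w' \in V$, which makes the distance distribution exactly that of a single spherical coordinate.
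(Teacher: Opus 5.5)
Your proposal is correct and is essentially the paper's proof. You show $(Q_{S,T})^{-1}\bm x \overset{d}{=} |\bm\eta_1|^{-1}\bm\xi$ with $\bm\eta\sim\mathrm{Unif}(\mathbb{S}^{n-m}(\R))$ and $\bm\xi\sim\mathrm{Unif}(\mathbb{S}^{m-1}(\R))$ independent, then condition on $\max_j|\bm y_j^T\bm\xi|$ exactly as the paper does; the only difference is that you get the law of the norm by conditioning on rows $2,\dots,m$ of $Q$, whereas the paper intersects the column space of $Q_{:,[m]}$ with $\mathrm{span}\{\bm e_1,\bm e_{m+1},\dots,\bm e_n\}$, which is the same observation in dual form.

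The paper gets $6=2\cdot 3$ by citing Dasgupta--Gupta for $\P[|\bm\eta_1|<a]\le 2\sqrt{n-m+1}\,a$ and $\E\max_j|\bm y_j^T\bm\xi|\le 3\sqrt{\log(ek)/m}$. To finish your own constant-tracking, use the independence of $\|\bm g\|$ and $\bm g/\|\bm g\|$: this gives $\E M=\E\max_j|\bm y_j^T\bm g|/\E\|\bm g\|\le\sqrt{2\log(2k)}\,\sqrt{m+1}/m\le 2\sqrt{\log(ek)/m}$, and hence the constant $4\le 6$.
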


\begin{proof}
Without loss of generality, we may take $B = Q_{[m],[m]}$ and $\bm x = \bm e_1$. First we identify the distribution of $(Q_{[m],[m]})^{-1} \bm{e}_1$. Note that $\bm u := Q_{:,[m]}(Q_{[m],[m]})^{-1} \bm{e}_1$ satisfies $\bm u_1 = 1$ and is in the intersection of the column space of $Q_{:,[m]}$ and $\mathrm{span}\{\bm e_1, \bm e_{m+1}, \ldots, \bm e_n\}$, which is almost surely one-dimensional. A unit vector in this intersection, after multiplication by a random $\pm 1$ sign, is distributed uniformly on $\mathbb{S}^{n-m}(\R)$, the unit sphere in this $(n-m+1)$ dimensional subspace, and so 
\[\|(Q_{[m],[m]})^{-1} \bm{e}_1\|_2 = \|\bm u\|_2 \overset{d}{=}|\bm \eta_1|^{-1},\]
where $\bm \eta \sim \mathrm{Unif}(\mathbb{S}^{n-m}(\R))$. By the orthogonal invariance of $Q_{[m],[m]}$, the direction of $(Q_{[m],[m]})^{-1} \bm{e}_1$ is uniform and independent of its length. Therefore, $(Q_{[m],[m]})^{-1} \bm{e}_1 \overset{d}{=} |\bm \eta_1|^{-1} \bm \xi $, where $\bm \xi \sim \mathrm{Unif}(\mathbb{S}^{m-1}(\R))$ is independent of $\bm \eta$. By \cite[Lemma 2.2]{dasgupta2003elementary},
\[ \P\left[|\bm \eta_1 |<a \right] \le 2 \sqrt{n-m+1} \; a\qquad \text{and} \qquad \E\left[ \max_{j \in [k]} |\bm y_j^T \bm \xi|  \right] \le 3 \sqrt{\frac{\log(ek)}m},\]
where the second estimate follows from a union bound and integration of their upper tail estimate. By independence,
\begin{align*}
\P \left[ \max_{j \in [k]} |\bm y_j^T (Q_{[m],[m]})^{-1} \bm x | >t \right] &= \E \left[ \P \left[ |\bm \eta_1| < \frac{1}{t} \max_{j \in [k]} |\bm y_j^T \bm \xi| \, \bigg| \max_{j \in [k]} |\bm y_j^T \bm \xi| \right] \right] \\
&\le \frac{2 \sqrt{n-m+1}}{t} \E\left[ \max_{j \in [k]} |\bm y_j^T \bm \xi|  \right] \\
&\le \frac{6}{t} \sqrt{\frac{(n-m+1) \log(ek)}m}.
\end{align*}
\end{proof}

Repeated application of Lemma \ref{lm:orthog} is sufficient to prove our desired Theorem \ref{thm:orthog}.

\begin{proof}[Proof of Theorem \ref{thm:orthog}]
Without loss of generality, by $Q \sim \mathrm{Haar}(\mathrm{O}(n))$ and the QR factorization, it suffices to consider upper triangular $A$. Then, by the block inversion formula,
\[(QA)/(QA)_{[k],[k]} = (Q/Q_{[k],[k]})A_{[k]^c,[k]^c} = (Q_{[k]^c,[k]^c})^{-T}A_{[k]^c,[k]^c},\]
where $[k]^c= [n]\setminus[k]$. Let $\alpha = \max_j \|A \bm e_j\|_2 \le \sqrt{n} \|QA\|_{\max}$. Then every column of $\alpha^{-1} A$ has two-norm at most one. Applying Lemma \ref{lm:orthog} to each row of $\alpha^{-1} U$ and summing the failure probabilities, we obtain
\[\P\left[ \frac{\|U\|_{\max}}{\|QA\|_{\max}} >t \right] \le  \frac{6\sqrt{n}}{t} \sum_{m = 1}^n \sqrt{\frac{(n-m+1) \log (em)}{m}} \le \frac{6 n \sqrt{\log(en)}}{t} \sum_{m=1}^n \frac1{\sqrt{m}} \le \frac{12 n^{3/2} \sqrt{\log(en)}}{t} .\]
Next, we consider $L$. Note that, because $A$ is upper triangular, $QA$ and $Q$ have the same lower triangular factor \cite[proof of Lemma 5.1]{shah2026entry}. The subdiagonal entries of the $i^{th}$ column of $L$ have law $(g_{i+1},\ldots, g_{n})/g_{i}$ for $g_i,\ldots,g_n$ i.i.d. $N(0,1)$, and therefore standard Gaussian estimates give, for $t \ge 1$,
\begin{equation}\label{gauss_L_law}
\P\left[\|L\|_{\max} >t\right] \le \sum_{i = 1}^{n-1} \P\left[\max_{j>i}|L_{ji}| >t \right] \le \frac{n-1}{t}\sqrt{\frac2 {\pi}}\, \E \left[ \max_{j \in [n]} |g_j|\right] \le \frac{2(n-1) \sqrt{\log (en)}}{\sqrt{\pi} t}.
\end{equation}
Combining the two probabilistic bounds for $L$ and $U$ completes the proof.
\end{proof}

\section{Growth of Gaussian Matrices without Pivoting}\label{sec:gauss}

Here we prove that the asymptotic distribution of the growth factor of a Gaussian matrix without pivoting, divided by $n^{\frac32}$, is inverse exponential. We require two lemmas. The first concerns the distribution of the largest entry in a row of $U$, which is related to beta random variables. The second concerns the maximum of the inverse square roots of independent beta random variables.

\begin{lemma}\label{lm:row_norms}
Let $G$ be an $n \times n$ Gaussian matrix with LU factorization $G = LU$, and
\[X_k = \| (-G_{k+1,[k]}G_{[k],[k]}^{-1},1)\|_2 \qquad \text{and} \qquad Y_k = \max_{j \ge k+1} \left| \frac{(-G_{k+1,[k]}G_{[k],[k]}^{-1},1)G_{[k+1],j}}{X_k} \right|\]
for $k \in [n-1]$. Then
\[\frac{1}{X_k^2} \sim \mathrm{Beta}\left(\frac12,\frac k2\right), \qquad Y_k \overset{d}{=} \max_{j \in [n-k]} |g_j|, \qquad \text{and} \qquad \max_{j\ge k+1} |U_{k+1,j}| = X_k Y_k\]
for all $k \in [n-1]$, where $g_1,\ldots,g_{n-1}$ are i.i.d. $N(0,1)$. Furthermore, $X_1,\ldots,X_{n-1}$ are mutually independent, and $X_k$ is independent of $Y_k$ for $k \in [n-1]$.
\end{lemma}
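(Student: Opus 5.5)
Set $\bm w_k^T := (-G_{k+1,[k]}G_{[k],[k]}^{-1},1) \in \R^{k+1}$. The plan is to express row $k+1$ of $U$ through $\bm w_k$, identify $\bm w_k$ geometrically, and then read off the distributions and independence from rotational invariance of Gaussian columns. By \eqref{eq:landu} and the Schur complement formula,
\[
U_{k+1,j} = G_{k+1,j} - G_{k+1,[k]}G_{[k],[k]}^{-1}G_{[k],j} = \bm w_k^T G_{[k+1],j}, \qquad j \ge k+1 .
\]
Together with the definition of $Y_k$, this gives $\max_{j\ge k+1}|U_{k+1,j}| = X_kY_k$ immediately.

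Next, identify $\bm w_k$ geometrically. Since $\bm w_k^T G_{[k+1],[k]} = 0$, the vector $\bm w_k$ spans the (a.s. one-dimensional) orthogonal complement in $\R^{k+1}$ of the column space of $G_{[k+1],[k]}$, normalized so that its last coordinate is $1$. Write $\bm v_k$ for a unit normal to this column space. Rotational invariance of the Gaussian columns of $G_{[k+1],[k]}$ shows that $\bm v_k$, after a random sign, is uniform on $\mathbb{S}^k$. Since $\bm w_k = \bm v_k/(\bm v_k)_{k+1}$, we get
\[
X_k^{-2} = (\bm v_k)_{k+1}^2 \sim \mathrm{Beta}\big(\tfrac12,\tfrac k2\big),
\]
the squared coordinate of a uniform point on $\mathbb{S}^k$.

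Independence of $X_k$ and $Y_k$ comes from conditioning on the first $k$ columns of $G$. Given $G_{:,[k]}$, the vector $\bm w_k/X_k = \pm\bm v_k$ is a fixed unit vector, while the columns $G_{[k+1],j}$ for $j\ge k+1$ are independent standard Gaussian vectors independent of it. Hence the $n-k$ quantities $\bm v_k^T G_{[k+1],j}$ are i.i.d. $N(0,1)$. So, conditionally, $Y_k$ has the law of $\max_{j\in[n-k]}|g_j|$. This law does not depend on $G_{:,[k]}$, so $Y_k$ has this law unconditionally and is independent of $G_{:,[k]}$, and in particular of $X_k$, which is a function of $G_{[k+1],[k]}$.

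The main obstacle is the mutual independence of $X_1,\dots,X_{n-1}$, because all of them depend on overlapping blocks of $G$. I would not work with the matrix $G_{[n],[n-1]}$ directly, since its row and column structure is awkward. Instead, use the determinant identity: by the block formula $\det G_{[k+1],[k+1]} = \det G_{[k],[k]}\cdot U_{k+1,k+1}$ and Cauchy–Binet, $X_k^{-1}$ equals the ratio of the $k$-dimensional volume of the rows of $G_{[k],[k]}$ to the $k$-dimensional volume of the rows of $G_{[k+1],[k]}$. Equivalently, $X_k^{-2} = \det(G_{[k],[k]}^TG_{[k],[k]})/\det(G_{[k+1],[k]}^TG_{[k+1],[k]})$. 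The key structural step is to pass to the rows $\bm r_1,\dots,\bm r_n$ of the first columns and condition inductively, with the aim of showing that $X_k$ is independent of $G_{[k],[k]}$ and of all earlier $X_j$. Let $P_k$ be the projection onto the orthogonal complement of $\mathrm{span}(\bm r_1,\dots,\bm r_k)$ in $\R^{k+1}$, restricted to the first $k+1$ coordinates. I would then show that $X_k^{-2}$ depends only on the ``new'' Gaussian coordinate $G_{k+1,[k]}$ and the direction of the new column $G_{[k],k}$ relative to the earlier rows, in a way whose conditional law given $G_{[k],[k-1]}$ is always $\mathrm{Beta}(\tfrac12,\tfrac k2)$. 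Because this conditional law is the same for every value of the conditioning variables, independence follows by induction on $k$.

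If that inductive conditioning becomes messy, a cleaner fallback is to use the QR/Bartlett decomposition $G_{[k+1],[k]}$-by-$G_{[k+1],[k]}$. This expresses each $X_k^{-2}$ as $\chi^2_1/(\chi^2_1+\chi^2_k)$ built from fresh independent chi-squared variables at each step, which makes the mutual independence transparent.
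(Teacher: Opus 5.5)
Your derivation of $U_{k+1,j}=\bm w_k^TG_{[k+1],j}$, of the $\mathrm{Beta}(\frac12,\frac k2)$ law via the uniformly distributed unit normal $\bm v_k$, and of the law of $Y_k$ and its independence from $X_k$ (by conditioning on $G_{:,[k]}$) is correct and matches the paper's argument.

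The gap is the mutual independence of $X_1,\dots,X_{n-1}$. Both intermediate claims your induction rests on are false. $X_k$ is \emph{not} independent of $G_{[k],[k]}$, and the conditional law of $X_k^{-2}$ given $G_{[k],[k-1]}$ is \emph{not} always $\mathrm{Beta}(\frac12,\frac k2)$. With $A=G_{[k],[k]}$ one has $X_k^2=1+G_{k+1,[k]}(A^TA)^{-1}G_{k+1,[k]}^T$. So $X_k$ depends on the whole Gram matrix of the past block, not only on the new row and the direction of the new column. Concretely, take $k=2$ and condition on $G_{[2],[1]}=(\epsilon,0)^T$. Then $G_{3,[2]}A^{-1}=(G_{31}/\epsilon,\,\ast)$, so $X_2^2\ge 1+G_{31}^2/\epsilon^2$, and the conditional law of $X_2^{-2}$ collapses to $0$ as $\epsilon\to 0$, while $X_1=1$ on this event. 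Thus $X_k$ is independent of $X_1,\dots,X_{k-1}$ only because those variables depend on directional information that $X_k$ ignores, and conditioning on the past block discards exactly that distinction.

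Your fallback does not close the gap either. A Bartlett/QR decomposition of each $G_{[k+1],[k]}$ separately gives only the marginal law of $X_k$ (indeed $X_k^{-2}=\det(Q_{[k],:})^2$ for the Stiefel factor $Q$). Because these matrices are nested, independence across $k$ needs one joint representation with independent pieces. You assert such a representation but do not construct it. A minor point: in your volume identity the denominator is the $k$-volume of the \emph{columns} of $G_{[k+1],[k]}$, not its rows.

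What is missing is an invariance that fully randomizes one variable while leaving the others fixed; it gives independence from the \emph{future} rather than the past. Let $O\sim\mathrm{Haar}(\mathrm{O}(k+1))$ be independent of $G$, and replace $G$ by $\mathrm{diag}(O,I_{n-k-1})\,G$, which has the same law.
\begin{itemize}
\item The unit normal to $\mathrm{col}(G_{[k+1],[k]})$ becomes $O\bm v_k$. The new $X_k$ is therefore $|(O\bm v_k)_{k+1}|^{-1}$, which is independent of $G$.
\item For $m>k$, the normal to $\mathrm{col}(G_{[m+1],[m]})$ becomes $\mathrm{diag}(O,I_{m-k})\bm v_m$, whose $(m+1)$-st coordinate is unchanged. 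So $X_{k+1},\dots,X_{n-1}$ are unchanged.
\end{itemize}
Hence $X_k$ is independent of $(X_{k+1},\dots,X_{n-1})$, and induction on $k$ gives mutual independence. This is the paper's argument.
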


\begin{proof}
Let $\bm x_k^T = (-G_{k+1,[k]}G_{[k],[k]}^{-1},1)$ for $k \in [n-1]$. We have
\[U_{k+1,j} = (G/G_{[k],[k]})_{1,j-k} =\bm x_k^T G_{[k+1],j} \qquad \text{for} \quad k+1 \le j,\]
from which $\max_{j \ge k+1} |U_{k+1,j}| = X_k Y_k$ for $k \in [n-1]$ follows immediately. Conditional on the first $k$ columns, $\bm x_k^T/X_k$ is fixed, the remaining columns are Gaussian, and so their inner products with $\bm x_k^T/X_k$ are independent $N(0,1)$. This gives the distribution of $Y_k$ and its independence from $X_k$.

Next, consider $X_k$. The vector $\bm{x}_k/X_k$ spans $\mathrm{null}(G_{[k+1],[k]}^T)$, and is therefore, after a random signing, uniformly distributed on the unit sphere $\mathbb{S}^{k}(\mathbb{R})$. $X_k^{-2}$ is the squared magnitude of its last entry, and so $X_k^{-2}$ is Beta distributed with parameters $\alpha = \frac{1}{2}$ and $\beta = \frac{k}{2}$. Finally, the independence of $X_k$ to $X_{k+1},\ldots,X_{n-1}$ follows from considering the application of a Haar orthogonal matrix to the first $k+1$ rows of $G$. This operation preserves the Gaussian distribution of the matrix, fully randomizes $X_k$, but leaves the values of $X_{k+1},\ldots,X_{n-1}$ unchanged.
\end{proof}

\begin{lemma}\label{lm:beta}
Let $X_1,\ldots, X_{n-1}$ be independent positive random variables satisfying $X_k^{-2} \sim \mathrm{Beta}(\frac12,\frac k2)$ for $k \in [n-1]$, 
\[c_k = \frac{2 \Gamma(\frac{k+1}{2})}{\sqrt{\pi} \Gamma(\frac k2)} \quad \text{for} \quad k \in [n-1], \qquad \text{and} \qquad \alpha_n =  \sum_{k=1}^{n-1} c_k.\]
Then $\alpha_n = \displaystyle{\frac{2 \sqrt{2}}{3 \sqrt{\pi}} n^{\frac32} }+ O(n^{\frac12})$ and, for fixed $x>0$,
\[\P\left[\max_{k \in [n-1]} X_k \le \alpha_n x \right] \rightarrow e^{-\frac{1}{x}}.\]
\end{lemma}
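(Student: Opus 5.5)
The argument has two parts: an asymptotic for $\alpha_n$, and the limit law obtained from independence by writing $\P[\max_k X_k\le \alpha_n x]=\prod_{k=1}^{n-1}\P[X_k\le \alpha_n x]$. Everything reduces to the explicit Beta tail.

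\textbf{Step 1: the constants.} By Stirling, $\Gamma(\frac{k+1}{2})/\Gamma(\frac k2)=\sqrt{k/2}\,(1+O(1/k))$. Hence $c_k=\sqrt{2k/\pi}+O(k^{-1/2})$. Summing,
\[\alpha_n=\sqrt{2/\pi}\cdot\tfrac23 n^{3/2}+O(n^{1/2}),\]
which is the claimed $\frac{2\sqrt2}{3\sqrt\pi}n^{3/2}+O(n^{1/2})$. Note also that $c_k$ is exactly the normalizing constant of the $\mathrm{Beta}(\frac12,\frac k2)$ density, since $1/B(\frac12,\frac k2)=\Gamma(\frac{k+1}2)/(\sqrt\pi\,\Gamma(\frac k2))=c_k/2$.

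\textbf{Step 2: tail of each $X_k$.} For $s=\alpha_n x$, we have $\P[X_k>s]=\P[X_k^{-2}<s^{-2}]=\frac{c_k}{2}\int_0^{s^{-2}}u^{-1/2}(1-u)^{k/2-1}\,du$. Bound $(1-u)^{k/2-1}$ between $(1-s^{-2})^{k/2}$ (say) and $1$ on $[0,s^{-2}]$, and use $\int_0^{s^{-2}}u^{-1/2}\,du=2/s$. This gives
\[\frac{c_k}{s}\,(1-s^{-2})^{k/2}\le \P[X_k>s]\le \frac{c_k}{s}.\]
Since $s\asymp n^{3/2}$ and $k\le n$, we have $(1-s^{-2})^{k/2}\ge 1-k/s^2=1-O(n^{-2})$. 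Hence $p_k:=\P[X_k>\alpha_n x]=\frac{c_k}{\alpha_n x}(1+O(n^{-2}))$ uniformly in $k$. For $k=1$ the exponent $k/2-1$ is negative and the inequalities flip, but the same $1+O(n^{-2})$ factor holds.

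\textbf{Step 3: the product.} We have $\sum_k p_k=\frac{1}{x}(1+O(n^{-2}))$ by the definition of $\alpha_n$. This is exactly why $\alpha_n$ is chosen as $\sum c_k$. Also $\max_k p_k\le c_{n}/(\alpha_n x)=O(1/n)$. Using $\log(1-p)=-p+O(p^2)$, we get $\sum_k\log(1-p_k)=-\sum_k p_k+O(\sum_k p_k^2)$, and $\sum_k p_k^2\le \max_k p_k\cdot\sum_k p_k=O(1/n)$. Hence $\prod_k(1-p_k)\to e^{-1/x}$.

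The only mildly delicate point is Step 2: obtaining the tail with a relative error that is uniform in $k$. This is handled because $\alpha_n x$ is far larger than $\sqrt k$, so the $(1-u)^{k/2-1}$ factor is negligible. The exact normalization from Step 1 is what makes $\sum_k p_k$ equal to $1/x$ rather than merely comparable to it.
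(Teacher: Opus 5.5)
Your proposal is correct and follows essentially the same route as the paper: the Gamma-ratio asymptotic for $c_k$, the Beta-tail computation giving $p_k = \frac{c_k}{\alpha_n x}\bigl(1+O(n^{-2})\bigr)$ uniformly in $k$, and then independence combined with $\log(1-p) = -p + O(p^2)$. The only cosmetic difference is that the paper substitutes $u \mapsto u^2$ and bounds $|(1-u^2)^{k/2-1}-1| \le C k u^2$, whereas you sandwich $(1-u)^{k/2-1}$ directly and also write out the estimate $\sum_k p_k^2 = O(1/n)$ explicitly.
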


\begin{proof}
Using the estimate $\Gamma(x+c)/\Gamma(x) = x^c + O_c(x^{c-1})$, we have
\[\alpha_n = \frac{2}{\sqrt{\pi}} \sum_{k=1}^{n-1} \frac{ \Gamma(\frac{k+1}{2})}{ \Gamma(\frac k2)} =  \sqrt{\frac2\pi} \, \sum_{k=1}^{n-1} k^{\frac12} + O(n^{\frac12}) = \frac{2 \sqrt{2}}{3 \sqrt{\pi}} n^{\frac32} + O(n^{\frac12}).\]
For $n$ sufficiently large, $1/(\alpha_n x) < 1/2$ and $|(1-u^2)^{\frac{k}{2}-1}-1|\le C k u^2$ for all $k \in [n-1]$ and $0 \le u \le 1/(\alpha_n x)$, for some constant $C$. Therefore,
\[p_k:=\P\left[X_k \ge \alpha_n x\right] = c_k  \int_{0}^{\frac1{\alpha_nx}} (1-u^2)^{\frac k2 -1} \, d u = \frac{c_k}{\alpha_n x} \left(1 + O_x\left(n^{-2}\right)\right).\]
Independence and the estimate $\log(1-p) = -p+O(p^2)$ imply our desired result
\[\log \P\left[\max_{k \in [n-1]} X_k \le \alpha_n x \right] = - \sum_{k=1}^{n-1} p_k + O\!\left(\sum_{k=1}^{n-1} p_k^2 \right) = - \frac1x + O_{x}\left(n^{-1}\right).\]
\end{proof}

Using Lemmas \ref{lm:row_norms} and \ref{lm:beta}, we are now prepared to prove Theorem \ref{thm:gauss_nopivot}.

\begin{proof}[Proof of Theorem \ref{thm:gauss_nopivot}]
Lemma \ref{lm:beta} gives control of $X_k$. In order to estimate $\|U\|_{\max}$, we require concentration of $Y_k$. Let $g_1,g_2,\ldots$ be i.i.d. $N(0,1)$. We have
\[\E\left[\frac{\max_{j \in [m]} |g_j|}{\sqrt{2 \log m}} \right] \rightarrow 1 \qquad \text{and} \qquad \E\left[\left|\max_{j \in [m]} |g_j| - \E \left[ \max_{j \in [m]} |g_j| \right] \right| \right] \le \sqrt{2\pi}, \]
by \cite[Exercise 2.11c]{wainwright2019high} and \cite[Proposition 10.3]{halko2011finding}, respectively. The quantity $\E \max_{j \in [m]} |g_j|$ is monotonic in $m$, and so, for every fixed $\epsilon \in (0,1)$,
\[\sup_{\epsilon n \le m \le n} \E \left[ \left| \frac{\max_{j \in [m]} |g_j|}{\sqrt{2 \log n}} -1\right|\right]\rightarrow 0  \qquad \text{and}  \qquad \sup_{m \in [n]} \E \left[ \left| \frac{\max_{j \in [m]} |g_j|}{\sqrt{2 \log n}} -1\right|\right]\le C.\]
We have $X_k^{-2} \sim \mathrm{Beta}(\frac12,\frac k2)$, and so $\P[X_k > t] \le \frac{\pi c_k}{2t}$ for all $t>0$. Then, by conditioning on each $Y_k$ and using the independence of $X_k$ and $Y_k$,
\[\P\left[\max_{k \in [n-1]} X_k \left|\frac{Y_k}{\sqrt{2 
\log n}} - 1 \right| > \delta \alpha_n \right]  \le \frac{\pi}{2\delta \alpha_n} \sum_{k =1}^{n-1} c_k \, \E \left[\left|\frac{Y_k}{\sqrt{2 
\log n}} - 1 \right| \right] \rightarrow 0\]
for all $\delta >0$. Note that the first row of $U$ is the first row of $G$, and can be safely ignored when considering the scale $\alpha_n \sqrt{2 \log n}$. Therefore, by Lemma \ref{lm:beta},
\[\P\left[\frac{ \|U\|_{\max}}{\alpha_n \sqrt{2 \log n}} \le  x \right] \rightarrow e^{-\frac{1}{x}}.\]
Now, consider $L$. The subdiagonal entries of the $i^{th}$ column of $L$ have the same law as in the proof of Theorem \ref{thm:orthog}, and so \eqref{gauss_L_law} holds here as well and $n^{-3/2}\|L\|_{\max} \rightarrow 0$ in probability. 

Finally, we have $\|G\|_{\max} /(2 \sqrt{\log n} ) \rightarrow 1$ in probability, and so, by Slutsky's theorem,
\[\frac{\|U\|_{\max}}{n^{3/2} \|G\|_{\max}} =  \frac{ \alpha_n}{\sqrt{2} n^{3/2}}\frac{\|U\|_{\max}}{\alpha_n \sqrt{2 \log n}} \frac{2 \sqrt{\log n}}{\|G\|_{\max}} \overset{d}{\longrightarrow}  X.\]
\end{proof}

\section{Singular Values and Gaussian Matrices}\label{sec:technical}

Here, we recall and prove six technical results regarding singular values and Gaussian matrices required for our analysis of partial pivoting in Sections \ref{sec:partial} and \ref{sec:smooth}. For Section \ref{sec:partial}, the analysis of a Gaussian matrix under partial pivoting, we apply the below lemmas with $\Delta = 0$. We denote the singular values of a matrix $M \in \R^{m \times n}$ by $\sigma_1(M) \ge \ldots \ge \sigma_{\min(m,n)}(M)$, and, for a square $M \in \R^{n \times n}$, let
\[\Sigma_k(M) = \sum_{i=1}^{n-k} \frac1{\sigma_i(M)^{2}}\]
denote the squared Frobenius norm of the inverse of a matrix after truncating the $k$ largest singular values of the inverse. By convention, we set $\Sigma_k(M) = 0$ when $k \ge n$. Let $M^{\dagger}$ denote the Moore-Penrose pseudoinverse of a matrix $M \in \R^{m \times n}$.
\begin{lemma}\label{lm:gauss_bounds}
Let $G \in \R^{m \times n}$ be Gaussian and $M \in \R^{n \times p}$, $\Delta \in \R^{m\times n}$ be fixed, with $\|\Delta\|_2 \le K$. Then
\begin{align}
\P\left[\|(G+\Delta)M\|_F^2 > 2 (\sqrt{m}+K)^2\|M\|_F^2 + 4 x \|M\|_2^2\right] &\le e^{-x} \qquad \text{and} \\
\P\left[\sigma_k((G+\Delta)M) > \|M\|_F + (\sqrt{m} +K + \sqrt{2 x/k}) \|M\|_2\right] &\le e^{-x}
\end{align}
for all $x >0$ and $k \le \min(m,p)$.
\end{lemma}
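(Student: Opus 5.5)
Both estimates will come from Gaussian concentration for Lipschitz functions of $G$, viewed as a standard Gaussian vector in $\R^{mn}$, combined with a bound on the expectation.

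\textbf{First inequality.} Define $f(G) = \|(G+\Delta)M\|_F$. Then
\[|f(G)-f(G')| \le \|(G-G')M\|_F \le \|G-G'\|_F\|M\|_2,\]
so $f$ is $\|M\|_2$-Lipschitz. Gaussian concentration therefore gives
\[\P\bigl[f > \E f + \sqrt{2x}\,\|M\|_2\bigr] \le e^{-x}.\]
For the mean, I would use
\[\E f \le \|\Delta M\|_F + \E\|GM\|_F \le K\|M\|_F + \sqrt{\E\|GM\|_F^2} = (K+\sqrt{m})\|M\|_F,\]
where $\E\|GM\|_F^2 = m\|M\|_F^2$ because each row of $G$ is standard Gaussian. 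Squaring and using $(a+b)^2 \le 2a^2+2b^2$ yields
\[f^2 \le 2(\sqrt m+K)^2\|M\|_F^2 + 4x\|M\|_2^2,\]
which is exactly the first claim.

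\textbf{Second inequality.} I would take $h(G) = \bigl(\sum_{i\le k}\sigma_i((G+\Delta)M)^2\bigr)^{1/2}$, the Ky Fan $2$-norm of the $k$ leading singular values. This is a norm on matrices dominated by the Frobenius norm, so the same argument shows $h$ is $\|M\|_2$-Lipschitz. Since $\sigma_k \le h/\sqrt{k}$, concentration reduces the claim to
\[\E h \le \sqrt{k}\,\|M\|_F + \sqrt{k}\,(\sqrt m+K)\|M\|_2.\]
Concentration contributes $\sqrt{2x}\,\|M\|_2/\sqrt{k}$, which matches the $\sqrt{2x/k}$ term in the statement.

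\textbf{Bounding $\E h$.} By the triangle inequality,
\[h(G) \le h_0(\Delta M) + h_0(GM), \qquad h_0(\Delta M) \le \sqrt{k}\,\|\Delta M\|_2 \le \sqrt{k}\,K\|M\|_2,\]
where $h_0$ denotes the Ky Fan $2$-norm itself. For the Gaussian part,
\[h_0(GM) \le \sqrt{k}\,\|GM\|_2.\]
By Gordon/Slepian, $\E\|GM\|_2 \le \sqrt{m}\,\|M\|_2 + \|M\|_F$. This is the standard bound $\E\|GM\|_2 \le \|I_m\|_2\|M\|_F + \|I_m\|_F\|M\|_2$ for Gaussian matrices sandwiched between fixed matrices, as in Halko--Martinsson--Tropp. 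Combining the pieces gives the required bound on $\E h$, with $\|M\|_F$ appearing unscaled after dividing by $\sqrt{k}$.

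\textbf{Main obstacle.} The delicate step is the expectation bound for $\E\|GM\|_2$, which relies on a Gaussian comparison inequality. I would simply cite it. If a self-contained route is preferred, one can apply Slepian's lemma to the process $\langle G M v, u\rangle$ over unit vectors $u,v$, compared against $\langle g, Mv\rangle\|\cdot\| + \langle h, u\rangle\|M\|_2$. Everything else is routine Lipschitz bookkeeping.
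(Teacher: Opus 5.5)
Your proposal is correct and is essentially the paper's proof. The paper also uses that $G\mapsto\|(G+\Delta)M\|_F$ and the top-$k$ Ky Fan $2$-norm $G\mapsto\bigl(\sum_{i\le k}\sigma_i((G+\Delta)M)^2\bigr)^{1/2}$ are $\|M\|_2$-Lipschitz, bounds their means by Halko--Martinsson--Tropp (Proposition 10.1) plus the triangle inequality, and finishes with Gaussian concentration at deviation $\sqrt{2x}\,\|M\|_2$, $(a+b)^2\le 2(a^2+b^2)$, and $\sigma_k\le f_k/\sqrt{k}$; your Jensen step for $\mathbb{E}\|GM\|_F$ and your citation of the comparison bound for $\mathbb{E}\|GM\|_2$ match this exactly.
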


\begin{proof}
Let $f_k(A):= \sqrt{\sum_{i =1}^k \sigma_i(A)^2}$. Both $G \mapsto \|(G+\Delta)M\|_F$ and $G \mapsto f_k((G+\Delta)M)$ are $\|M\|_2$-Lipschitz, and so, by \cite[Proposition 10.1]{halko2011finding} and the triangle inequality,
\[\E\,\|(G+\Delta)M\|_F \le \left( \sqrt{m} +K \right)\|M\|_F \qquad \text{and} \qquad \E\, f_k((G+\Delta)M)\le \sqrt{k} \left( \|M\|_F +  \left( \sqrt{m} +K \right)\|M\|_2\right).\]
The desired result follows from Gaussian concentration \cite[Proposition 10.3]{halko2011finding} with deviation $\sqrt{2x} \|M\|_2$ and the bounds $(a+b)^2 \le 2(a^2+b^2)$ and $\sigma_k(A) \le f_k(A)/\sqrt{k}$. \end{proof}

\begin{lemma}\label{lm:wide_gauss}
Let $G \in \R^{m\times n}$ be Gaussian and $\Delta \in \R^{m \times n}$ be fixed, with $m \le r$ and $n \ge 3 r$. Then
\[\P\left[ \|(G +\Delta)^\dagger \|_2^2 > \frac{2 e^{\frac{2x}{r}+2}}{r}  \right] \le e^{-x}\]
for all $x \ge 1$.
\end{lemma}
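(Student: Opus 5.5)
The plan is to remove the deterministic shift by a projection and then prove a lower tail bound for the smallest singular value of a tall, unshifted Gaussian matrix. Set $A = (G+\Delta)^T \in \R^{n\times m}$. Since $m\le r<n$, $A$ has full column rank almost surely, and
\[\|(G+\Delta)^\dagger\|_2 = \sigma_m(A)^{-1}.\]
So it suffices to show
\[\P\bigl[\sigma_m(A)^2 < \tfrac{r}{2} e^{-\frac{2x}{r}-2}\bigr]\le e^{-x}.\]

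\textbf{Step 1 (removing $\Delta$).} The range of $\Delta^T$ has dimension at most $m\le r$. Let $P$ be the orthogonal projection onto its orthogonal complement, a subspace of dimension $N\ge n-r\ge 2r\ge 2m$. Then $PA = PG^T$. Because $P$ is a contraction,
\[\sigma_m(A)=\min_{\|\bm u\|_2=1}\|A\bm u\|_2 \ge \min_{\|\bm u\|_2=1}\|PA\bm u\|_2 = \sigma_m(PG^T).\]
By orthogonal invariance, $PG^T$ written in an orthonormal basis of $\mathrm{range}(P)$ is an $N\times m$ Gaussian matrix $W$. This is exactly why the hypothesis $n\ge 3r$ is imposed instead of a norm bound on $\Delta$. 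It is also why no net argument is attempted in the presence of an unbounded $\Delta$.

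\textbf{Step 2 (small ball for a fixed direction).} For a fixed unit $\bm u$, $W\bm u$ is standard Gaussian in $\R^N$. Bounding the density by $(2\pi)^{-N/2}$ and using the volume of a ball (or the chi-square moment generating function) gives
\[\P[\|W\bm u\|_2^2\le t^2N]\le (e t^2)^{N/2}.\]
This is the same small-ball estimate for chi-square variables used implicitly in Lemma~\ref{lm:row_norms}.

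\textbf{Step 3 (net argument).} Work on the event $\|W\|_2\le \sqrt N+\sqrt m+\sqrt{2s}\le 3\sqrt N$. This event fails with probability at most $e^{-s}$ with $s\approx N/2$, by Lemma~\ref{lm:gauss_bounds} (case $\Delta=0$, $M=I$) or standard Gaussian concentration. Take an $\varepsilon$-net of $\mathbb{S}^{m-1}$ of size at most $(3/\varepsilon)^m\le (3/\varepsilon)^{N/2}$. If $\|W\bm v\|_2\ge 2t\sqrt N$ for every net point $\bm v$, then
\[\sigma_m(W)\ge 2t\sqrt N - 3\varepsilon\sqrt N\ge t\sqrt N \qquad\text{for } \varepsilon = t/3.\]
A union bound then gives
\[\P[\sigma_m(W)^2< t^2N]\le (9/t)^{N/2}(4et^2)^{N/2}+e^{-N/2}=(36et)^{N/2}+e^{-N/2}.\]
We want $t^2N = \tfrac r2 e^{-2x/r-2}$. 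Since $N\ge 2r$, it suffices to take $t^2\approx e^{-2x/r-2}/4$. Then $t^{N/2}$ is of order $e^{-xN/(2r)}\le e^{-x}$, up to the constant $(36e\cdot\tfrac12 e^{-1})^{N/2}$.

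\textbf{Main obstacle.} The delicate part is the constants. The crude net argument above leaves a factor like $18^{N/2}$ that the clean bound $e^{-x}$ does not absorb. The $e^{-N/2}$ term from the operator norm is also only harmless when $x\lesssim N/2$. If this fails, I would first replace the net in Step 3 with an exact computation. Since $\sigma_m(W)^2=\lambda_{\min}(W^TW)$, use
\[\sigma_m(W)^{-2}\le \mathrm{tr}\bigl((W^TW)^{-1}\bigr)=\sum_{i}\mathrm{dist}(\bm w_i,\mathrm{span}\{\bm w_j\}_{j\ne i})^{-2}.\]
Here each distance squared is $\chi^2_{N-m+1}$ with $N-m+1\ge r+1$ degrees of freedom. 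Because $N\ge 2r$, the loss of the factor $m\le r$ in the trace is compensated by the $\approx N/r$ degrees of freedom. The resulting tail $\bigl(e s/(N-m+1)\bigr)^{(N-m+1)/2}$ in a union bound over the $m$ columns should produce the stated $e^{2x/r+2}$ shape, with $x\ge 1$ used to absorb the factor $m\le r\le e^{x}$.
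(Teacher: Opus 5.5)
Your Step 1 is correct and is essentially the paper's first step (the paper keeps only $2r$ orthonormal columns of $\mathrm{null}(\Delta)$ rather than the whole null space). The gap is in the core estimate: the proposal never produces a lower-tail bound for $\sigma_m(W)$ sharp enough to reach the threshold $2e^{2x/r+2}/r$ with failure probability $e^{-x}$.

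At your choice of $t$, the net bound is $18^{N/2}e^{-xN/(2r)}+e^{-N/2}$. For $N=2r$ this equals $18^{r}e^{-x}+e^{-r}$, which exceeds $e^{-x}$ for every $x$. So Step 3 proves nothing at these constants.

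The proposed repair is fatally lossy, not just imprecise. The bound $\sigma_m(W)^{-2}\le\mathrm{tr}\bigl((W^TW)^{-1}\bigr)$ throws away a factor of order $m$, because $\mathrm{tr}\bigl((W^TW)^{-1}\bigr)=\sum_i\sigma_i(W)^{-2}\ge m\|W\|_2^{-2}\ge m/(9N)$ whenever $\|W\|_2\le 3\sqrt N$. Now consider the regime $m=r$, $N=\Theta(r)$. This is exactly how the lemma is used: with $X\in\R^{k\times 4k}$ in Lemma~\ref{lm:gauss_extension} and with $Q_t^TG_\star\in\R^{\min(t,r)\times 4r}$ in Lemma~\ref{lm:fix}. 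There the trace lower bound is an absolute constant with high probability, while the target $2e^{2x/r+2}/r$ is $O(1/r)$ for $x\le r$. The extra degrees of freedom cannot compensate a factor $m\approx r$ unless $N\gtrsim r^2$. Also, the inequality $m\le r\le e^x$ you invoke does not follow from $x\ge 1$.

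What is missing is a sharp small-ball estimate for the smallest singular value of a rectangular Gaussian matrix. It has to come from exact distributional information about Wishart matrices, not from a union bound over a net. The paper uses Proposition A.3 of \cite{halko2011finding}: for a $k\times(k+p)$ Gaussian $W$ and $t\ge 1$,
\[
\P\bigl[\|W^\dagger\|_2\ge \tfrac{e\sqrt{k+p}}{p+1}\,t\bigr]\le t^{-(p+1)}.
\]
After the projection, the paper pads $GQ\in\R^{m\times 2r}$ with $r-m$ further Gaussian rows, which can only decrease the smallest singular value. It then applies the proposition with $k=p=r$ and $t=e^{x/r}$. This gives $\|(G+\Delta)^\dagger\|_2^2\le 2e^2re^{2x/r}/(r+1)^2\le 2e^{2x/r+2}/r$ with failure probability $e^{-x(r+1)/r}\le e^{-x}$. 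The case $r=1$ is a direct $\chi^2_2$ computation. For $m\ge 2$ you could equally apply the proposition to your $W^T$ with $k=m$ and $p=N-m\ge r$.

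If you only want the lemma up to absolute constants, which is all its downstream uses need, your net argument can be rescued. Take the operator-norm threshold $2\sqrt N+\sqrt{2(x+1)}$, shrink $\varepsilon$ accordingly, and choose $t=c\,e^{-O(x/N)}$ with $c$ a small absolute constant. This yields $\|(G+\Delta)^\dagger\|_2^2\le Ce^{Cx/r}/r$, but not the stated constants.
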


\begin{proof}
Let $Q \in \R^{n \times 2r}$ have orthonormal columns in $\mathrm{null}(\Delta)$. Then $(G+\Delta)Q \in \R^{m \times 2r}$ is Gaussian and $(G+\Delta)(G+\Delta)^T \succeq (G+\Delta)QQ^T(G+\Delta)^T$, so $\|(G +\Delta)^\dagger \|_2 \le \|((G +\Delta)Q)^\dagger \|_2 = \|(GQ)^\dagger\|_2$. Next, add $r-m$ Gaussian rows to $GQ$, which can only decrease the smallest singular value. When $r>1$, apply \cite[Proposition A.3]{halko2011finding} with $t = e\sqrt{2/r}\,e^{x/r}$ to the resulting $r \times 2r$ matrix. When $r=1$, this is a standard result about $\chi_2^2$. 
\end{proof}

\begin{lemma}\label{lm:gauss_sing_val}
Let $G \in \R^{n\times n}$ be Gaussian and $\Delta \in \R^{n \times n}$ be fixed. Then
\[\P\left[ \sigma_{n-j}(G+\Delta) \le \frac{c j \theta}{\sqrt n} \right] \le n^{j+1} \theta^{\frac14 j^2}\]
for all $4 \le j < n$ and $\theta \in (0,1]$, for some absolute constant $c \in (0,1]$.
\end{lemma}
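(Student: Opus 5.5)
The plan is to reduce the event to a small-ball event for a $k\times k$ Gaussian block with $k=\lfloor j/2\rfloor$, and then take a union bound over $\binom nk\le n^{j+1}$ choices of columns. Write $A=G+\Delta$, which is almost surely invertible, and set $\varepsilon=cj\theta/\sqrt n$. The event $\sigma_{n-j}(A)\le\varepsilon$ says that $A^{-1}$ has at least $j+1$ singular values $\ge 1/\varepsilon$. Let $V\in\R^{n\times(j+1)}$ be the corresponding top right singular vectors of $A^{-T}$, i.e.\ the top left singular vectors of $A^{-1}$, so that $A^{-1}$ dominates $\varepsilon^{-1}$ times a rank-$(j+1)$ projection on the row side. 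The first goal is to find a set $T\subset[n]$ with $|T|=k$ such that
\[\sigma_k\big((A^{-1})_{T,:}\big)\ \ge\ \frac{c'}{\varepsilon}\sqrt{\frac{j}{n}} .\]
I would get this from restricted invertibility (Bourgain--Tzafriri, or the Spielman--Srivastava form) applied to the rows of $V$. These are $n$ vectors in $\R^{j+1}$ with $\sum_i \bm v_i\bm v_i^T=I_{j+1}$. Restricted invertibility yields $k\approx j/2$ of them whose Gram matrix has smallest eigenvalue $\gtrsim (j+1-k)^2/((j+1)n)\asymp j/n$. Since $(A^{-1})_{T,:}(A^{-1})_{T,:}^T\succeq \varepsilon^{-2}V_{T,:}V_{T,:}^T$, the claim follows.

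Next I would translate this into a statement about columns of $A$. Write $A=[X_T\ X_{T^c}]$ after permuting columns, and let $P_T$ be the orthogonal projection onto $\mathrm{span}(X_{T^c})^\perp$, a $k$-dimensional subspace. Block inversion gives $(A^{-1})_{T,:}=(P_TX_T)^{-1}P_T$ in suitable coordinates, so the singular values of $(A^{-1})_{T,:}$ are exactly the reciprocals of those of the $k\times k$ matrix $P_TX_T$. The bound from the previous step therefore becomes
\[\|P_TX_T\|_2\le \varepsilon':=\frac{\varepsilon}{c'}\sqrt{\frac{n}{j}}=\frac{c}{c'}\sqrt j\,\theta .\]
This operator-norm bound is the key point: it says the whole $k\times k$ block is tiny, not just one direction. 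That is what produces the exponent $k^2\approx j^2/4$.

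Now condition on $X_{T^c}$, which fixes $P_T$ independently of $X_T$. Let $U_T$ be an orthonormal basis of the range of $P_T$. Then $U_T^TX_T=U_T^TG_{:,T}+U_T^T\Delta_{:,T}$ is a $k\times k$ Gaussian matrix plus a fixed shift. Its density is at most $(2\pi)^{-k^2/2}$ on $\R^{k\times k}$, so
\[\P[\|U_T^TX_T\|_2\le\varepsilon']\le (2\pi)^{-k^2/2}\,\mathrm{vol}\{M:\|M\|_F\le \sqrt k\,\varepsilon'\}\le \Big(\frac{C\varepsilon'}{\sqrt k}\Big)^{k^2}\le (C''c\,\theta)^{k^2}.\]
Choosing $c$ small makes $C''c\le1$, and since $\theta\le1$ this is at most $\theta^{k^2}$. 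A union bound over the at most $n^k\le n^{j+1}$ choices of $T$ finishes the argument. The requirement $j\ge4$ is there to handle the rounding in $k=\lfloor j/2\rfloor$, so that $k^2\ge j^2/4$ up to adjusting constants. For odd $j$ I would take $k=\lceil j/2\rceil$, which still leaves $j+1-k\gtrsim j$ for restricted invertibility.

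The main obstacle is the first step: getting a restricted-invertibility subset of size about $j/2$ with the correct $\sqrt{j/n}$ scaling and explicit constants, uniformly over the realization. Everything after that is the clean block-inverse identity plus a volume estimate. If citing Spielman--Srivastava is awkward, I would instead use an averaging or volume argument, $\sum_{|T|=k}\det(V_{T,:}V_{T,:}^T)=e_k(I_{j+1})$. This produces some $T$ whose Gram determinant is large, at the cost of only controlling the product of the singular values of $P_TX_T$ rather than their maximum. That version would require a small-ball bound for $|\det|$ of a shifted Gaussian block combined with an upper bound on its norm, which is messier. So I would try restricted invertibility first.
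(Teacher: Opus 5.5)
Your proposal is correct, and it takes a genuinely different route from the paper. The paper gives no direct argument. It invokes Nguyen's overcrowding estimate \cite[(7) of Theorem 1.7]{nguyen2018random} with $\gamma=1/2$, $k=j+1$, and only checks the small-ball hypothesis $\sup_a\P[|X-a|\le\epsilon]\le\sqrt{2/\pi}\,\epsilon$ for $X\sim N(0,1)$.

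You instead prove the Gaussian case from scratch in three steps.
\begin{itemize}
\item Restricted invertibility applied to the rows of $V$.
\item The identity $(A^{-1})_{T,:}=(U_T^TA_{:,T})^{-1}U_T^T$. This converts the lower bound on $\sigma_k((A^{-1})_{T,:})$ into the operator-norm bound $\|U_T^TA_{:,T}\|_2\lesssim c\sqrt{j}\,\theta$.
\item A volume bound for the shifted $k\times k$ Gaussian block $U_T^TA_{:,T}$, conditional on $A_{:,T^c}$. The union bound over fixed $T$ correctly handles the data-dependent choice of $T$.
\end{itemize}

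What you call the ``main obstacle'' is not really one. The Spielman--Srivastava theorem is deterministic and gives explicitly $\lambda_{\min}(V_{T,:}V_{T,:}^T)\ge(1-\sqrt{k/(j+1)})^2(j+1)/n$. So the determinant fallback is unnecessary, and tracking constants shows that $c=1/8$ suffices.

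One small correction to your remarks. The hypothesis $j\ge4$ plays no role in your argument. Taking $k=\lceil j/2\rceil$ for all $j$ gives the bound $n^{\lceil j/2\rceil}\theta^{\lceil j/2\rceil^2}\le n^{j+1}\theta^{j^2/4}$ for every $1\le j<n$. It is the ceiling, not $j\ge 4$, that handles rounding. The floor would genuinely fail for odd $j$, because $\theta^{(j-1)^2/4}\le \theta^{j^2/4}$ cannot be repaired by adjusting constants as $\theta\to0$.

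As for what each approach buys: the citation is short and rests on a theorem that covers non-Gaussian entry distributions. There the rotation invariance you exploit, that $U_T^TG_{:,T}$ is again a standard Gaussian block, is unavailable, and the final small-ball step is harder. Your version is self-contained modulo restricted invertibility and has explicit constants. It uses Gaussianity only in the last volume estimate, and it yields a slightly stronger bound than the lemma states.
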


\begin{proof}
This follows immediately from \cite[(7) of Theorem 1.7]{nguyen2018random} by taking $\gamma = 1/2$, $k = j+1$, $\epsilon = c j \theta/(j+1)$, using $\sup_{a}\P[|X-a|\le \epsilon]\le \sqrt{2/\pi} \; \epsilon$ for $X \sim N(0,1)$, and choosing $c$ sufficiently small.
\end{proof}

\begin{lemma}\label{lm:gauss_convex}
Let $X \in \R^n$ be distributed as $N(\bm v,I_n)$ conditioned on lying in a convex set $\mathcal{K} \subset \R^n$ of positive Gaussian measure with $\mathcal{K}=-\mathcal{K}$. Then
\begin{equation}
\left\| \E X  \right\|_2 \le \| \bm v \|_2 \qquad \text{and} \qquad \E \left[ e^{\bm u^T (X - \E X)}\right] \le e^{\frac12 \|\bm u\|_2^2} \qquad \text{for all} \quad \bm u \in \R^n.
\end{equation}
For fixed $M \in \R^{n \times p}$ and $x >0$,
\begin{equation}
\P\left[\|X^T M\|_2 > \sqrt{ 2 \|M\|_F^2 + 4x \|M\|_2^2} + \| \bm v\|_2\|M\|_2 \right] \le e^{-x}.
\end{equation}
\end{lemma}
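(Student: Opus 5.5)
The plan is to prove the three claims of Lemma \ref{lm:gauss_convex} in order, using the log-concavity of the conditioned law and its symmetry. Write the density of $X$ as proportional to $e^{-\frac12\|\bm x-\bm v\|_2^2}\mathbf 1_{\mathcal K}(\bm x)$, which is log-concave with respect to $\frac12\|\cdot\|^2$; that is, $X$ is "more log-concave than" a standard Gaussian.

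\textbf{Mean bound.} Let $\mu_{\bm v}$ denote the law, and consider $m(\bm v)=\E X$ as a function of $\bm v$. Write $Z(\bm v)=\int_{\mathcal K}e^{-\frac12\|\bm x-\bm v\|^2}d\bm x$ and $\phi(\bm v)=\log Z(\bm v)+\frac12\|\bm v\|^2=\log\int_{\mathcal K}e^{\bm v^T\bm x-\frac12\|\bm x\|^2}d\bm x$. Then $\nabla\phi(\bm v)=m(\bm v)$ and $\nabla^2\phi=\mathrm{Cov}(X)$.

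By the Brascamp--Lieb inequality, applicable because the potential is $\frac12\|\bm x-\bm v\|^2$ plus a convex indicator, $\mathrm{Cov}(X)\preceq I_n$. By symmetry $\mathcal K=-\mathcal K$, $\phi$ is even, so $m(\bm 0)=\bm 0$. Hence
\[m(\bm v)=\int_0^1\nabla^2\phi(s\bm v)\bm v\,ds,\]
and since $0\preceq\nabla^2\phi\preceq I$, we get $\|m(\bm v)\|_2\le\|\bm v\|_2$.

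\textbf{Sub-Gaussian mgf.} Here $\log\E e^{\bm u^T X}=\phi(\bm v+\bm u)-\phi(\bm v)$. Taylor expansion with $\nabla^2\phi\preceq I$ gives
\[\phi(\bm v+\bm u)-\phi(\bm v)\le \bm u^T m(\bm v)+\tfrac12\|\bm u\|^2,\]
which is exactly the claim. (Alternatively, cite Caffarelli contraction or the Bakry--Émery log-Sobolev inequality for measures more log-concave than $N(0,I)$.) Verifying the hypotheses of Brascamp--Lieb for a density with a convex indicator needs a short approximation argument: replace $\mathbf 1_{\mathcal K}$ by $e^{-t\,\mathrm{dist}(\cdot,\mathcal K)^2}$ and let $t\to\infty$. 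This is the one technical point; the rest is routine.

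\textbf{Tail bound.} Split $X^TM=(X-\E X)^TM+(\E X)^TM$. The second term has norm at most $\|\bm v\|_2\|M\|_2$ by the mean bound. For the first, set $Y=X-\E X$, which is sub-Gaussian in the mgf sense with identity proxy. I would bound $\|M^TY\|_2$ using Hanson--Wright-type control obtained from the mgf bound via Gaussian decoupling. Let $\bm g\sim N(0,I_p)$ be independent, so that
\[\E e^{\lambda\|M^TY\|^2/2}=\E_{\bm g}\E_Y e^{\sqrt\lambda\,\bm g^TM^TY}\le\E_{\bm g}e^{\frac\lambda2\|M\bm g\|^2}=\det(I-\lambda M^TM)^{-1/2}\]
for $\lambda\|M\|_2^2<1$. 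Take $\lambda=1/(2\|M\|_2^2)$ and use $-\frac12\log(1-s)\le s$ for $s\le\frac12$, which gives
\[\E e^{\lambda\|M^TY\|^2/2}\le e^{\lambda\|M\|_F^2}.\]
Markov's inequality then yields
\[\P\bigl[\|M^TY\|^2>2\|M\|_F^2+4x\|M\|_2^2\bigr]\le e^{\lambda\|M\|_F^2-\frac\lambda2(2\|M\|_F^2+4x\|M\|_2^2)}=e^{-x}.\]
The triangle inequality then completes the proof, since $\|X^TM\|_2\le\|M^TY\|_2+\|\bm v\|_2\|M\|_2$.
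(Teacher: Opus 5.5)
Your proof is correct and follows the paper's argument. Both use the same log-partition function with $0 \preceq \nabla^2 f \preceq I_n$, evenness to get $\nabla f(0)=0$, integration along the segment for the mean bound, Taylor's theorem for the mgf, and a sub-Gaussian quadratic-form tail plus the triangle inequality. The only differences are in which results you invoke:
\begin{itemize}
\item For $\nabla^2 f \preceq I_n$, the paper uses Prékopa's theorem: $f(\bm y)-\frac12\|\bm y\|_2^2=\log(\bm 1_{\mathcal K} * \varphi_n)(\bm y)$ is concave. This handles the indicator directly and avoids your smoothing step for Brascamp--Lieb.
\item For the tail, the paper cites Hsu--Kakade--Zhang. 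Your Gaussian-decoupling computation reproves that bound in a self-contained way, with exactly the stated constants.
\end{itemize}
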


\begin{proof}
Let $\varphi_n(\bm x) = (2 \pi)^{-n/2}e^{-\|\bm x\|_2^2/2}$ be the density of a standard Gaussian vector and
\[f(\bm y) = \log \int_{\mathcal{K}} e^{\bm y^T \bm x} \varphi_n(\bm x) \, d \bm x.\]
Then $X_{\bm y}$, distributed as $N(\bm y,I_n)$ conditioned on lying in $\mathcal{K}$, has density $\bm{1}_{\mathcal{K}}(\bm x) e^{\bm y^T \bm x - f(\bm y)} \varphi_n(\bm x)$. In addition, $\nabla f(\bm y) = \E X_{\bm y}$ and $\nabla^2 f(\bm y) = \mathrm{Cov}(X_{\bm y})$. Therefore $\nabla^2 f(\bm y) \succeq 0$. Next, we aim to upper bound this quantity. By \cite[Theorem 6]{prekopa1973logarithmic},
\[\log \int_{\R^n} \bm{1}_{\mathcal{K}}(\bm x) \varphi_n(\bm x - \bm y) \, d \bm x = f(\bm y) - \frac12 \|\bm y\|_2^2\]
is concave, and so $\nabla^2 f(\bm y) \preceq I_n$. By the symmetry of $\mathcal{K}$, $\nabla f( 0) = 0$. We have
\[\E X_{\bm y} = \nabla f(\bm y) - \nabla f(0) = \int_0^1 \nabla^2 f(t \bm y) \bm y \, d t,\]
and, by $0 \preceq \nabla^2 f(t \bm y) \preceq I_n$,
\[\|\E X_{\bm y}\|_2 \le \int_0^1 \|\nabla^2 f(t \bm y) \bm y\|_2 \, dt \le \|\bm y\|_2.\]
Similarly,
\begin{align*}
\log \E \left[ e^{\bm u^T (X_{\bm y} - \E X_{\bm y})}\right] &= f(\bm y + \bm u) - f(\bm y) - \bm u^T \E X_{\bm y} \\ &= \int_0^1 (1-t) \, \bm u^T \nabla^2 f(\bm y +t \bm u) \bm u \, d t \\
&\le \int_0^1 (1-t) \|\bm u\|_2^2 \, dt \\
&= \frac12 \|\bm u\|_2^2.
\end{align*}
By \cite[Theorem 2.1]{hsu2012tail} applied to $X_{\bm y} - \E X_{\bm y}$ with $A = (M M^T)^{1/2}$ and $\mathrm{trace}((MM^T)^2) \le \|M\|_2^2 \|M\|_F^2$,
\[\|(X_{\bm y} - \E X_{\bm y})^T M\|_2^2 \le 2 \|M\|_F^2 + 4x \|M\|_2^2\]
with probability at least $1- e^{-x}$, which implies our final desired bound, by the triangle inequality.
\end{proof}

\begin{lemma}\label{lm:gauss_extension}
Let $k<n$ and the fixed matrix $M \in \R^{n \times n}$ satisfy $\sigma_{n-k}(M) \ge \mu$ and $\Sigma_k(M) \le 2 k \mu^{-2}$ for some $0<\mu \le \sqrt{k}$. Let $G \in \R^{n \times 4k}$ be Gaussian, $\Delta \in \R^{n \times 4k}$ be fixed with $\|\Delta\|_2 \le K$, and $B = [M \;(G + \Delta)]$. Then, for any $x \ge 1$,
\begin{align}
\|B^\dagger\|_2^2 &\le C \mu^{-2} (1+K^2/k)e^{\frac{Cx}{k}} \qquad \text{and}\\
\|B^\dagger\|_F^2 &\le C k \mu^{-2} (1+K^2/k)e^{\frac{Cx}{k}}
\end{align}
with probability at least $1 - 3 e^{-x}$, for some absolute constant $C$.
\end{lemma}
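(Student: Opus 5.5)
We need to bound $\|B^\dagger\|_2^2 = 1/\sigma_n(B)^2$, where $B = [M\;(G+\Delta)] \in \R^{n\times(n+4k)}$, together with $\|B^\dagger\|_F^2=\sum_i \sigma_i(B)^{-2}$. We have $BB^T = MM^T + (G+\Delta)(G+\Delta)^T$. The plan is to split $\R^n$ into the top part of $M$'s spectrum and the bottom $k$ directions, using the new Gaussian columns to handle the latter.

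\emph{Step 1 (decomposition).} Write the SVD $M = \sum_i \sigma_i(M)\bm u_i \bm v_i^T$. Let $P$ be the orthogonal projection onto $\mathrm{span}\{\bm u_{n-k+1},\ldots,\bm u_n\}$, a fixed $k$-dimensional subspace, and let $P^\perp = I-P$. On $\mathrm{range}(P^\perp)$ we have $MM^T \succeq \mu^2 P^\perp$, since $\sigma_{n-k}(M)\ge\mu$. We aim for a lower bound of the form
\[BB^T \succeq c\,\mu^2 (1+K^2/k)^{-1} e^{-Cx/k} I,\]
which bounds $\|B^\dagger\|_2^2$. For the Frobenius bound, we use that $\sigma_i(B)\ge\sigma_i(M)$ for $i\le n-k$, since adding columns only increases singular values. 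Hence
\[\|B^\dagger\|_F^2 \le \Sigma_k(M) + k\|B^\dagger\|_2^2 \le 2k\mu^{-2} + k\|B^\dagger\|_2^2,\]
and the Frobenius bound follows from the spectral one. It therefore suffices to prove the spectral bound.

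\emph{Step 2 (the small directions).} Let $W = U_k^T(G+\Delta) \in \R^{k\times 4k}$, where $U_k$ has columns $\bm u_{n-k+1},\ldots,\bm u_n$. Then $W$ is a $k\times 4k$ Gaussian matrix plus a fixed shift, so Lemma \ref{lm:wide_gauss} applies with $m=r=k$ and $n\ge 3r$. With probability $1-e^{-x}$ it gives
\[\sigma_k(W)^2 \ge \frac{k\, e^{-2x/k-2}}{2}.\]
This is at least $\mu^2 e^{-Cx/k}$, because $\mu\le\sqrt k$.

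\emph{Step 3 (cross terms).} For a unit vector $\bm z = \bm a+\bm b$ with $\bm a = P^\perp \bm z$ and $\bm b = P\bm z$,
\[\|B^T\bm z\|^2 \ge \|M^T\bm a\|^2 + \|(G+\Delta)^T\bm z\|^2 \ge \mu^2\|\bm a\|^2 + \big(\|W^T\bm b\| - \|(G+\Delta)^T\bm a\|\big)_+^2,\]
where we used $M^T\bm b\perp M^T\bm a$. We bound $\|(G+\Delta)^T P^\perp\|_2 \le \|G+\Delta\|_2 \lesssim \sqrt n + K + \sqrt x$ using Lemma \ref{lm:gauss_bounds} (probability $1-e^{-x}$). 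Then we run a two-case argument. If $\|\bm a\| \ge \eta\|\bm b\|$, the first term gives $\|B^T\bm z\|^2 \gtrsim \mu^2\eta^2$. Otherwise the second term gives $\gtrsim \sigma_k(W)^2$ whenever $\eta\|G+\Delta\|_2 \le \sigma_k(W)/2$.

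The \textbf{main obstacle} is that this naive choice $\eta \approx \sigma_k(W)/\|G+\Delta\|_2 \sim \sqrt{k/n}$ loses a factor $n/k$, which the statement does not allow. To avoid it, I will instead bound $\|(G+\Delta)^T\bm a\|$ against the large singular values of $M$. Concretely, I will work with the Schur complement of $BB^T$ onto $\mathrm{range}(P)$:
\[S = W(I - H)W^T, \qquad H = (G+\Delta)^TP^\perp\big(P^\perp(MM^T+(G+\Delta)(G+\Delta)^T)P^\perp\big)^{-1}P^\perp(G+\Delta),\]
where $H$ is a projection-like contraction. The goal is to show $I-H$ retains a constant fraction of $W$'s spectrum. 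This reduces to bounding $\|(MM^T)^{-1/2}P^\perp(G+\Delta)\|_F^2$, which is at most about $4k\,\Sigma_k(M)(1+K^2/k)+\ldots \lesssim k^2\mu^{-2}(1+K^2/k)$ by the first bound of Lemma \ref{lm:gauss_bounds} with $\|M'\|_F^2 \le \Sigma_k(M)$.

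Even then the $4k$ columns are only partially spent. My fallback is to split the $4k$ Gaussian columns into two independent blocks: one block of $2k$ columns reduces $W$ via Lemma \ref{lm:wide_gauss} with $n\ge 3r$ still holding after conditioning, and the other controls the cross term through Lemma \ref{lm:gauss_bounds}. The three failure events give the $3e^{-x}$ in the statement, with all constants absorbed into $C$.
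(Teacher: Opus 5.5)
\textbf{There is a genuine gap in Step 3, the decisive step.} Your Steps 1 and 2 are fine. The reduction $\|B^\dagger\|_F^2\le \Sigma_k(M)+k\|B^\dagger\|_2^2$ is in fact a cleaner route to the Frobenius bound than the paper's blockwise estimate. Step 3, however, is not established, and as formulated it fails.

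Let $D$ be the diagonal matrix of the $n-k$ largest singular values of $M$, $U_\perp$ the corresponding left singular vectors, $V=U_\perp^T(G+\Delta)$, and $N=D^{-1}V\in\R^{(n-k)\times 4k}$; this is your $(MM^T)^{-1/2}P^\perp(G+\Delta)$. By Woodbury, $I-H=(I+N^TN)^{-1}$ exactly.

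\emph{The stated target is false.} The claim that $I-H$ retains a constant fraction of $W$'s spectrum fails in general. Suppose $\mu\ll\sqrt k$ and $M$ has about $4k$ singular values of order $\mu$, with the rest much larger. Then most eigenvalues of $N^TN$ are of order $k/\mu^2$, and $\lambda_{\min}(W(I-H)W^T)$ is of order $\mu^2$, not of order $\sigma_k(W)^2\sim k$.

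\emph{The proposed reduction loses a factor $k$.} Plugging $\|N\|_F^2\lesssim k^2\mu^{-2}(1+K^2/k)$ into $\lambda_{\min}(S)\ge \sigma_k(W)^2/(1+\|N\|^2)$ gives only $\lambda_{\min}(S)\gtrsim \mu^2/k$. What works is the \emph{spectral} bound $\|N\|_2^2\lesssim \mu^{-2}(k+K^2+x)$. It follows from the second inequality of Lemma \ref{lm:gauss_bounds} with $\|D^{-1}\|_F^2=\Sigma_k(M)\le 2k\mu^{-2}$. Then $\sigma_k(W)^2\gtrsim k e^{-2x/k}$ exactly cancels the factor $k/\mu^2$, using $\mu^2\le k$. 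So the retained fraction is $\Theta(\mu^2/k)$, which is all the statement needs.

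\emph{The fallback does not help.} A $k\times 2k$ block violates the hypothesis $n\ge 3r$ of Lemma \ref{lm:wide_gauss}. In any case, the cross term lives in the same columns that repair $W$, so splitting the columns does not separate the two issues.

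\textbf{The return from $S$ to $\|B^\dagger\|_2$ is also missing.} You never pass from the Schur complement back to $\|B^\dagger\|_2^2=\|(BB^T)^{-1}\|_2$, and this is not automatic. The inverse also carries the off-diagonal term $\|S^{-1/2}WV^T(D^2+VV^T)^{-1}\|_2^2$. Bounding it by $\|S^{-1}\|_2\|W\|_2^2\|V^T(D^2+VV^T)^{-1}\|_2^2$ loses a factor $k/\mu^2$. The term is in fact at most $\mu^{-2}$, but showing this requires $S\succeq W(I+N^TN)^{-1}W^T$ again.

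The paper sidesteps this with an explicit right inverse. It passes to $\hat B$ with $\hat B\hat B^T\preceq BB^T$, rotating the Gaussian columns by $Q=X^T(XX^T)^{-1/2}$. The block hitting the small directions of $M$ then becomes the invertible $XQ$, and one reads off $\|\hat B^{-1}\|_2^2\le 3[(1+\|\hat M^{-1}YQ\|_2^2)\|X^\dagger\|_2^2+\mu^{-2}]$, with $\hat M^{-1}YQ$ controlled in spectral norm.

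In your notation the same idea is: to solve $Bw=y$, take Gaussian coefficients $W^\dagger U_k^Ty$ and absorb the remainder with $M$. This gives $\|B^\dagger\|_2\le \mu^{-1}+(1+\|N\|_2)/\sigma_k(W)$. Combined with your Steps 1--2 and the spectral bound on $N$, that completes the proof.
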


\begin{proof}
Without loss of generality, let $M$ be diagonal with $M_{ii} = \sigma_{n-i+1}(M)$. Let $\hat M = M_{[n]\setminus[k],[n]\setminus[k]}$, partition $G + \Delta = \begin{bmatrix} X \\ Y \end{bmatrix}$, where $X \in \R^{k \times 4k}$ and $Y \in \R^{(n-k) \times 4k}$, and let $Q = X^T(XX^T)^{-1/2} \in \R^{4k \times k}$. Now consider the matrix
\[\hat B = \begin{bmatrix} XQ & 0 \\ Y Q & \hat M \end{bmatrix} \qquad \text{with} \qquad \hat B^{-1} = \begin{bmatrix} (XQ)^{-1} & 0 \\ - \hat M^{-1}  Y Q (XQ)^{-1} & \hat M^{-1}\end{bmatrix}.\]
We have
\[B B^T - \hat B \hat B^T = \begin{bmatrix} M_{[k],[k]}M_{[k],[k]}^T & 0 \\  0 & 0  \end{bmatrix} + (G+\Delta)(I-Q Q^T)(G + \Delta)^T \succeq 0,\]
and so we may instead work with $\hat B$ and bound the blocks of $\hat B^{-1}$. By Lemma \ref{lm:gauss_bounds} applied to $(YQ)^T \hat M^{-1}$, conditional on $X$, and Lemma \ref{lm:wide_gauss} applied to $X$,
\begin{align*}
\|\hat M^{-1}YQ\|_F^2 &\le 2(\sqrt{k}+K)^2 \|\hat M^{-1}\|_F^2 + 4x \|\hat M^{-1}\|_2^2 \le C \mu^{-2} (k^2 + kK^2 + x), \\
\|\hat M^{-1}YQ\|_2^2 &\le \left(\|\hat M^{-1}\|_F +(\sqrt{k}+K+\sqrt{2x})\|\hat M^{-1}\|_2\right)^2 \le C \mu^{-2}(k+K^2 + x), \quad \text{and} \\
     \|(XQ)^{-1}\|_2^2 &= \|X^\dagger\|_2^2 \le \frac{2e^2}{k} e^{2x/k}
\end{align*} 
with probability at least $1-3 e^{-x}$. Combining this with the bounds
\begin{align*}
\|\hat B^{-1}\|_F^2 &\le (k +  \|\hat M^{-1}YQ\|_F^2 )\|(XQ)^{-1}\|_2^2+ 2 k \mu^{-2} \quad \text{and} \\
\|\hat B^{-1}\|_2^2 &\le 3\big[(1+\|\hat M^{-1}YQ\|_2^2)\|(XQ)^{-1}\|_2^2+\mu^{-2}\big],
\end{align*}
and the inequality $1 + x/k + K^2/k \le (1+K^2/k)e^{x/k}$ completes the proof.
\end{proof}

\begin{lemma}\label{lm:adding_rows}
Let $M \in  \R^{n \times (n+k)}$ have rank $n$, the columns of $Q \in \R^{(n+k)\times k}$ be an orthonormal basis for $\mathrm{null}(M)$, $B \in \R^{k \times (n+k)}$, and $j<k/2$. Then
\begin{equation}
\sigma_{n+k-2j}\left(\begin{bmatrix} M \\ B \end{bmatrix}\right) \ge \left(\|M^\dagger\|_2 + \frac{1 + \sigma_{j+1}(BM^\dagger)}{\sigma_{k-j}(BQ)} \right)^{-1}.
\end{equation}
\end{lemma}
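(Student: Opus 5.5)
Write $A = \begin{bmatrix} M \\ B\end{bmatrix} \in \R^{(n+k)\times(n+k)}$. We will use the Courant--Fischer characterization
\[
\sigma_{n+k-2j}(A) \ge \sigma \iff \text{there is a subspace } V,\ \dim V \ge n+k-2j, \text{ with } \|A\bm z\|_2 \ge \sigma\|\bm z\|_2 \text{ for all } \bm z\in V.
\]
So the task is to exhibit a subspace of codimension at most $2j$ on which $A$ is bounded below by the claimed quantity. Decompose $\R^{n+k} = \mathrm{row}(M)\oplus \mathrm{null}(M)$, and write each vector as
\[
\bm z = M^\dagger \bm a + Q\bm b, \qquad \bm a\in\R^n,\ \bm b\in\R^k,
\]
using that $M$ has rank $n$, so $\mathrm{row}(M)=\mathrm{range}(M^\dagger)$ and $MM^\dagger = I_n$. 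Then
\[
M\bm z = \bm a \qquad \text{and} \qquad B\bm z = BM^\dagger\bm a + BQ\bm b .
\]

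Next we choose the codimension-$2j$ subspace. Impose $j$ linear constraints on $\bm a$: require $\bm a \perp$ the top $j$ right singular vectors of $BM^\dagger$. On that set, $\|BM^\dagger \bm a\|_2 \le \sigma_{j+1}(BM^\dagger)\|\bm a\|_2$. Impose $j$ constraints on $\bm b$: require $\bm b \perp$ the bottom $j$ right singular vectors of the $k\times k$ matrix $BQ$. On that set, $\|BQ\bm b\|_2 \ge \sigma_{k-j}(BQ)\|\bm b\|_2$. The set $V$ of $\bm z$ satisfying both is a subspace of dimension $n+k-2j$. The hypothesis $j<k/2$ is only needed so that these indices make sense; if $\sigma_{k-j}(BQ)=0$, the bound is trivial.

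On $V$ we will lower-bound $\|A\bm z\|_2$ by setting $s=\|\bm a\|_2 = \|M\bm z\|_2$ and $\beta=\|\bm b\|_2$. Since $\|\bm z\|_2 \le \|M^\dagger\|_2 s + \beta$, it suffices to show
\[
\|A\bm z\|_2 \ge \frac{\|M^\dagger\|_2 s + \beta}{\|M^\dagger\|_2 + \frac{1+\sigma_{j+1}(BM^\dagger)}{\sigma_{k-j}(BQ)}} .
\]
Write $\tau = \sigma_{k-j}(BQ)$ and $\rho=\sigma_{j+1}(BM^\dagger)$. We have $\|A\bm z\|_2 \ge \max\{s,\ \tau\beta - \rho s\}$, since $\|A\bm z\|_2 \ge \|M\bm z\|_2 = s$ and $\|A\bm z\|_2 \ge \|B\bm z\|_2 \ge \tau\beta-\rho s$. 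Set $\lambda = \|A\bm z\|_2$. Then $s\le\lambda$ and $\beta \le (\lambda+\rho s)/\tau \le \lambda(1+\rho)/\tau$, so
\[
\|M^\dagger\|_2 s+\beta \le \lambda\left(\|M^\dagger\|_2 + \frac{1+\rho}{\tau}\right),
\]
which is exactly the required inequality.

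The only delicate point is the dimension count: the two sets of $j$ constraints act on independent coordinates $(\bm a,\bm b)$ of a direct-sum parametrization, so $V$ genuinely has codimension at most $2j$. This works because the map $(\bm a,\bm b)\mapsto M^\dagger\bm a+Q\bm b$ is an isomorphism $\R^{n+k}\to\R^{n+k}$; that holds since $\mathrm{range}(M^\dagger)=\mathrm{row}(M)$ is complementary to $\mathrm{null}(M)=\mathrm{range}(Q)$. The rest of the argument is the elementary max-bound above.
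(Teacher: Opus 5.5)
Your proof is correct. It is the input-side dual of the paper's argument, rather than the same argument.

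Both proofs rest on the parametrization $\bm z = M^\dagger\bm a + Q\bm b$ and end with the same two-term estimate. The difference is where the $2j$ bad directions are removed. The paper removes them on the output side. It takes $W\in\R^{k\times(k-2j)}$ with orthonormal columns orthogonal to the bottom $j$ \emph{left} singular directions of $BQ$ and the top $j$ left singular directions of $BM^\dagger$. It then uses $\sigma_{n+k-2j}\left(\begin{bmatrix} M \\ B\end{bmatrix}\right)\ge\sigma_{\min}\left(\begin{bmatrix} M \\ W^TB\end{bmatrix}\right)$ and bounds the norm of the explicit solution $\bm x = M^\dagger\bm a + Q(W^TBQ)^\dagger(\bm b - W^TBM^\dagger\bm a)$ of the compressed wide system. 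In effect, that is a pseudoinverse bound.

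You remove the directions on the input side instead. You cut the domain down with \emph{right} singular vectors and apply Courant--Fischer directly. This spares you the compressed system, its solvability, and the pseudoinverse of $W^TBQ$.

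A structural consequence follows from this. Your two sets of $j$ constraints live in different blocks ($\bm a\in\R^n$, $\bm b\in\R^k$). The paper removes both sets from the same $\R^k$, and that is exactly where $j<k/2$ is needed to leave room for $W$. In your version, as you note, the hypothesis only keeps the indices meaningful.

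In the other direction, the paper's choice always has enough singular directions available. Your dimension count $n+k-2j$ tacitly assumes $j\le n$, so that $BM^\dagger$ has $j$ right singular vectors. If $j>n$, take $\bm a=0$; then $V$ has dimension $k-j\ge n+k-2j$, so the conclusion is unaffected.
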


\begin{proof}
Let $W \in \R^{k \times (k-2j)}$ have orthonormal columns orthogonal to the bottom $j$ left singular directions of $BQ$ and top $j$ left singular directions of $BM^\dagger$, so that 
\[\sigma_{k-2j}(W^T BQ)\ge \sigma_{k-j}(BQ) \qquad  \text{and} \qquad  \|W^TBM^\dagger\|_2 \le \sigma_{j+1}(BM^{\dagger}).\]
We have
\[\sigma_{n+k-2j}\left(\begin{bmatrix} M \\ B \end{bmatrix}\right) \ge \sigma_{n+k-2j}\left(\begin{bmatrix} M \\ W^T B \end{bmatrix}\right) = \sigma_{\min}\left(\begin{bmatrix} M \\ W^T B \end{bmatrix}\right),\]
and so it suffices to consider the system $M \bm x = \bm a $, $W^T B \bm x = \bm b$ and control the size of the solution $\bm x = M^\dagger \bm a + Q (W^TBQ)^\dagger (\bm b - (W^TBM^\dagger) \bm a)$. We have
\begin{align*}
\|\bm x\|_2 &\le \|M^\dagger\|_2 \| \bm a \|_2 + \frac{\|\bm b\|_2 + \|W^TBM^\dagger\|_2 \|\bm a\|_2}{\sigma_{k-2j}(W^T BQ)} \\
&\le \left(\|M^\dagger\|_2 + \frac{1+\|W^TBM^\dagger\|_2}{\sigma_{k-2j}(W^T BQ)} \right) \sqrt{\|\bm a\|_2^2 + \|\bm b\|_2^2}\\
&\le \left(\|M^\dagger\|_2 + \frac{1+\sigma_{j+1}(BM^{\dagger})}{\sigma_{k-j}(BQ)} \right) \sqrt{\|\bm a\|_2^2 + \|\bm b\|_2^2}.
\end{align*}
\end{proof}

\section{Growth of Gaussian Matrices under Partial Pivoting}\label{sec:partial}

Here, we prove that the growth factor of an $n \times n$ Gaussian matrix $G$ under partial pivoting is rarely much more than $\sqrt{n}$. The proof technique is based on control of most of the singular values of the leading submatrices $\Pi(G)_{[k],[k]}$. Our technique, at a high level, is heavily inspired by that of Huang and Tikhomirov  \cite{huang2024average}, with a number of key refinements and differences in our spectral recursion and handling of extreme singular values. To obtain the correct $\sqrt{n}$ behavior, we exploit the only guaranteed benefit of partial pivoting, that is, that $\|L\|_{\max} =1$. Other than exploiting this key fact, the majority of our analysis is devoted to showing that the greedy nature of partial pivoting does not create a situation which is too much worse than no pivoting at all. While simulations suggest that partial pivoting is in fact better than no pivoting for many of our desired estimates, such as $\Sigma_r(\Pi(G)_{[k],[k]})$, proving this seems significantly more challenging than the growth conjecture itself.

Our argument involves revealing pivot choices or columns of $G$ iteratively and conditioning. Let $\pi_{t}$ be the ordered list of the first $t$ pivot indices, $\pi_t^c = [n]\setminus \{\pi_{t}(1),\ldots,\pi_t(t)\}$ ordered in increasing order, $T_t=G_{\pi_{t},[t]}$, $\mathcal{H}_t$ record the matrix $T_t$ and the pivot indices $\pi_{t}$, and $\mathcal{F}_t$ record the matrix $G_{[n],[t]}$. We say a quantity is measurable based on some information if it can be determined from it. In the coming analysis, we use $\mathcal{H}_t$ to argue about non-pivot rows and $\mathcal{F}_t$ to reveal new columns. Note that many of the results presented are stated in slightly greater generality than needed here, to aid in the smoothed analysis in the following section. Throughout this section, the notation $C_x$, for some variable $x$, denotes a positive constant depending only on $x$, whose value may increase from one occurrence to the next. The exception is a constant used to define a threshold hold, which is fixed once chosen, i.e., $D_\gamma$ below is fixed. For brevity, let $\lambda = \log n$.

\subsection{A High-Level Outline of the Proof} Before turning to the proof itself, we present a brief high-level sketch of the argument. The partial pivoting rule \eqref{eq:partial} implies that $\|L\|_{\max} = 1$, and $\|G\|_{\max}$ is at least one with exponentially high probability. Therefore, it suffices to show $\|U\|_{\max}$ is rarely much bigger than $n^{\frac12}$. We do so by proving an even stronger statement, that is, that $\max_{t} \|G / T_t\|_{\max}$, the largest entry of any of the Schur complements computed during elimination, is rarely much bigger than $n^{\frac12}$. By \eqref{eq:landu}, the entries of $U$ are a subset of the entries of the Schur complements. Let $P_t$ be the permutation matrix that reorders the original row indices as $(\pi_t,\pi_t^c)$, with the pivot indices in pivot order and the remaining indices in increasing order. Let
\[E_t = [ - G_{\pi^c_t,[t]} T_t^{-1} \;  I]P_t \in \R^{(n-t)\times n} \]
be the elimination matrix after $t$ steps of Gaussian elimination, and consider an arbitrary entry of $G/T_t$, corresponding to row $\bm e^T$ of $E_t$ and column index $j>t$ of $G$. Then, conditional on $\mathcal{F}_t$, this entry is normally distributed
\[\bm e^T G_{[n],j} \, | \, \mathcal{F}_t \sim N\big(0,\|\bm e\|_2^2\big),\]
as the $j^{th}$ column of $G$ is independent of $\mathcal{F}_t$. Therefore, controlling the norms of the rows of $E_t$ controls the entries of $G/T_t$ through a union bound over Gaussian variables. Simply put, to prove Conjecture \ref{conj:trefethen}, it suffices to show that, with overwhelming probability, $\|\bm{e}\|_2^2$ is not much bigger than $n$.

Ideally, one could control the row norms of $E_t$ by controlling the singular values of $T_t$. Indeed, control of $\|T_t^{-1}\|_F^2$ would be sufficient, as, conditional on $\mathcal{H}_t$, each row of $G_{\pi_t^c,[t]}$ is distributed as a Gaussian conditioned to lie in the convex set
\begin{equation}\label{eq:convex}
\mathcal{K}_t = \left\{\bm x \, \Big| \, \| \bm x \hat U^{-1}\|_{\infty} \le 1, \; \text{where} \; T_t = \hat L \hat U\right\},
\end{equation}
which, by Lemma \ref{lm:gauss_convex}, retains enough ``Gaussianity'' for our purposes. One might hope that $\|T_t^{-1}\|_F^2 \lesssim n$, which would follow from, say, $\sigma_{t-j}(T_t) \gtrsim (j+1)/\sqrt{n}$ for all $j$. Unfortunately, $\frac1n \|G^{-1}\|_{F}^2$ has a heavy tail and an infinite mean. It is true, however, that, by Lemma \ref{lm:gauss_sing_val}, $\Sigma_r(G_{[t],[t]}) \lesssim n^{1+o(1)}$ for $r = \lceil \sqrt{\lambda} \rceil$ with all but superpolynomially small probability. Unsurprisingly, numerical results suggest that partial pivoting only helps matters, that is, $\Sigma_r(T_t)$ behaves better than $\Sigma_r(G_{[t],[t]})$; see Figure \ref{fig:inverse}. We prove, in Proposition \ref{prop:truncated_bound}, that it is not too much worse. The proof of Proposition \ref{prop:truncated_bound} uses only two properties of partial pivoting: that the $k^{th}$ pivot is chosen after seeing only the first $k$ columns of the matrix, and removing some number of rows not selected in the first $k$ steps and re-running Gaussian elimination without them does not change the first $k$ pivots chosen. In particular, the result is equally valid for any adversarial pivoting scheme with these two properties.

Armed with the information that $\Sigma_r(T_t)$ is not too much more than $n$, we can obtain a bound on the row norms of $E_{t+4r}$ via a rank argument for $E_{t}$ combined with an argument that $4r$ additional steps of Gaussian elimination smooth the $r$ bad directions of $E_t$ without worsening the situation too much in some other way. In Lemma \ref{lm:control}, we show that $E_t$ can be written as the sum of two parts, one whose row norms are controlled by $\Sigma_r(T_t)$ and one of rank at most $r$. In Lemma \ref{lm:fix}, we prove that the randomness introduced by revealing $4r$ additional Gaussian columns smooths out the bad low rank part of $E_t$ and that, by the greedy property of partial pivoting, the elimination matrix for these additional $4r$ steps has row one-norms of size at most $2^{4r}$, which, for $r = \lceil\sqrt{\lambda}\rceil$, are $n^{o(1)}$. This bounds the row norms of $E_{t+4r}$. Using this observation, in Proposition \ref{prop:truncated}, we prove that control on $\Sigma_r(T_t)$ for $t \in [n]$ implies control on the growth factor under partial pivoting.

Combining Proposition \ref{prop:truncated} (control on $\Sigma_r(T_t)$, $t \in [n]$, implies control on growth) with Proposition \ref{prop:truncated_bound} (control on $\Sigma_r(T_t)$, $t \in [n]$) completes the proof.

\begin{figure}[t!]
\centering
  \begin{subfigure}[t]{0.49\textwidth}
    \centering
    \includegraphics[width =.9\textwidth]{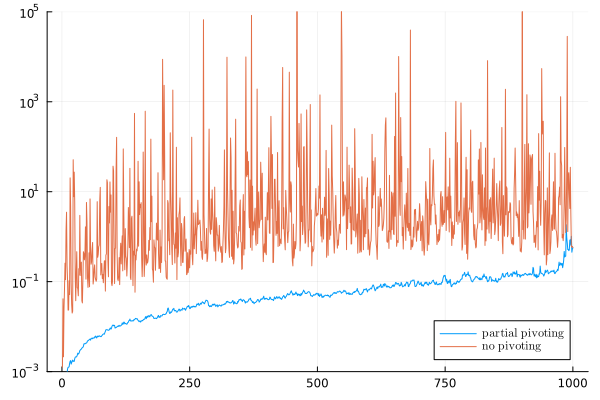}
    \caption{$\frac1n \|T_t^{-1}\|_F^2$ vs $\frac1n\|(G_{[t],[t]})^{-1}\|_F^2$}
  \end{subfigure}\hfill  
  \begin{subfigure}[t]{0.49\textwidth}
    \centering
\includegraphics[width =.9\textwidth]{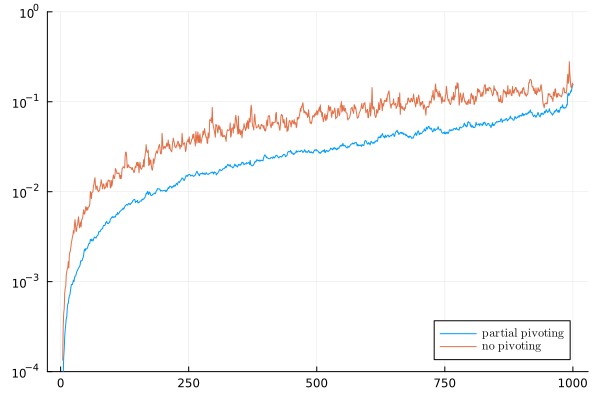}
    \caption{$\frac1n\Sigma_3(T_t)$ vs $\frac1n\Sigma_3(G_{[t],[t]})$}
  \end{subfigure}
 \caption{We compute the singular values of the inverses of the leading submatrices of a single $1000 \times 1000$ Gaussian matrix, both before and after partial pivoting. In Figure (A), the squared Frobenius norms of the inverses without pivoting are poorly controlled. With partial pivoting, the situation is much better, but still this quantity can get quite large near $t = n$. In Figure (B), we consider the Frobenius norms of the truncated inverses, with their $ \lceil \sqrt{\lambda} \rceil = 3$ larger singular values removed. These quantities are much better controlled, both with and without pivoting, though partial pivoting still performs better than no pivoting at all for $t$ not too close to $n$.}
\label{fig:inverse}
\end{figure}

\subsection{Controlling the Truncated Inverse Bounds Growth} To bound the growth factor, it suffices to control the squared Frobenius norms of the inverses of the leading pivot blocks after removing their $r$ largest singular values.

\begin{proposition}\label{prop:truncated} Let $p >0$ and $r< n/8$, and suppose that, for some deterministic $\tau,\eta >0$,
\[\P\left[ \Sigma_r(T_t)\le \tau \text{ for all } t<n\right] \ge 1 - \eta.\]
Then, for all sufficiently large $n$, depending only on $p$,
\[\P\left[ \growth(\Pi(G)) > \sqrt{1 + \tau} \, e^{ C_p \left(r + \frac{\lambda}r\right)}\right] \le \eta + n^{-p}.\]
\end{proposition}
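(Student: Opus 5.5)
Since $\|L\|_{\max}=1$ under partial pivoting and $\|G\|_{\max}\ge 1$ with probability $1-e^{-cn}$, it suffices to show that, on the event $\{\Sigma_r(T_t)\le\tau\ \forall t\}$ and outside a further event of probability $n^{-p}$, every entry of every Schur complement $G/T_t$ is at most $\sqrt{1+\tau}\,e^{C_p(r+\lambda/r)}$. Each such entry is $\bm e^T G_{[n],j}$ for a row $\bm e$ of $E_t$ and a column $j>t$. Conditional on $\mathcal F_t$ this is $N(0,\|\bm e\|_2^2)$. A union bound over the at most $n^3$ triples $(t,\bm e,j)$ then reduces matters to showing that, with probability $1-\eta-n^{-p-1}$, every row of every $E_t$ satisfies $\|\bm e\|_2^2\le (1+\tau)e^{C_p(r+\lambda/r)}$; the extra $\sqrt{\lambda}$ factor from the Gaussian union bound is absorbed into $e^{C_p\lambda/r}$.

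\textbf{Step 1: rank decomposition (the content of Lemma \ref{lm:control}).} Write $T_t^{-1}=A_t+R_t$, where $R_t$ is the part of the SVD carrying the $r$ largest singular values of $T_t^{-1}$, so $\|A_t\|_F^2=\Sigma_r(T_t)\le\tau$ and $\operatorname{rank}R_t\le r$. A non-pivot row $\bm x=G_{i,[t]}$, conditional on $\mathcal H_t$, is a Gaussian restricted to the symmetric convex set $\mathcal K_t$ in \eqref{eq:convex}. Lemma \ref{lm:gauss_convex} with $\bm v=0$ and $M=A_t$ gives $\|\bm x A_t\|_2^2\le 2\tau+4x\|A_t\|_2^2\lesssim \tau(1+\lambda)$ with failure probability $n^{-p-3}$. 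After a union bound over rows and $t$, the row $\bm e=(-\bm xT_t^{-1},\bm e_i^T)$ of $E_t$ splits into a good part of squared norm $\lesssim 1+\tau\lambda$ plus $-\bm xR_t$, which lies in a fixed $r$-dimensional subspace $V_t$ measurable in $\mathcal H_t$. (The loss $\lambda$ here should be sharpened to $e^{C\lambda/r}$ or better by applying Lemma \ref{lm:gauss_convex} blockwise; in any case it is absorbed into the final exponent.)

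\textbf{Step 2: smoothing by $4r$ further columns (Lemma \ref{lm:fix}).} This is the main obstacle. For $s=t+4r$, express $E_s$ as $E'\,E_t$, where $E'$ is the $(n-s)\times(n-t)$ elimination matrix of the Schur complement $G/T_t$ over the next $4r$ pivot steps. Partial pivoting forces every multiplier to have modulus at most $1$, so the rows of $E'$ have $\ell_1$-norm at most $2^{4r}$. Hence each row of $E_s$ is a combination, with $\ell_1$ weight at most $2^{4r}$, of rows of $E_t$ that are all "good" except for their components in the low-rank directions. The point is that after elimination, the rows of $E_s$ annihilate the $4r$ new columns $G_{[n],[t+1,t+4r]}$, which are Gaussian and independent of $\mathcal F_t$. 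The bad parts of the rows are therefore combinations $\bm y^TW$, with $W$ an $r$-dimensional basis image of $V_t$ in $\mathbb R^n$, subject to linear constraints coming from the fresh Gaussian block.

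I would apply Lemma \ref{lm:gauss_extension} or Lemma \ref{lm:adding_rows}, taking $M$ to be the controlled part and $B$ the fresh Gaussian columns. This should show that the constraint forces the coefficient $\bm y$ to be bounded by $e^{C(r+\lambda/r)}$ times the good norm, with failure probability $e^{-x}$, $x\approx (p+3)\lambda$; the lemmas give $e^{Cx/k}$ with $k\asymp r$, which is exactly where $e^{C_p\lambda/r}$ arises. The delicate point is that the pivots chosen in steps $t+1,\dots,t+4r$ depend on the new columns. I expect to handle this by bounding uniformly over all $\binom{n}{4r}$ choices of the $4r$ pivot rows only through the $\ell_1$ bound $2^{4r}$, with the smoothing estimate itself holding for the subspace independently of which rows are selected.

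\textbf{Step 3: assembly.} Steps 1 and 2 give $\|\bm e\|_2^2\le(1+\tau)e^{C_p(r+\lambda/r)}$ for every $t\ge 4r$ on the good event. For $t<4r$, use the $\ell_1$ bound directly: $E_t$ has rows of $\ell_1$-norm at most $2^{t}\le 2^{4r}$, which already suffices. The hypothesis $r<n/8$ guarantees that enough columns remain to apply Lemma \ref{lm:gauss_extension} near the end; for $t>n-4r$, apply the same argument with the last available columns. Finally, union bound all failure probabilities to total $\eta+n^{-p}$ for $n$ large depending on $p$.
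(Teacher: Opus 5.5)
Your outline follows the paper's architecture: the reduction to row norms of $E_t$ via conditional Gaussianity of the fresh columns, the decomposition of Lemma \ref{lm:control}, the $\ell_1$ bound $2^{4r}$ on the rows of the $4r$-step elimination matrix, and the deterministic bound for stages at most $4r$. The gap is Step 2. You correctly flag it as the crux, but you only assert that the constraint ``should'' force the bad coefficient $\bm y$ to be bounded, and the tools you name would not show it. Lemma \ref{lm:gauss_extension} bounds the pseudoinverse of a column extension $[M\;(G+\Delta)]$ of a \emph{square} $M$ with $\sigma_{n-k}(M)\ge\mu$ and $\Sigma_k(M)\le 2k\mu^{-2}$, and no such $M$ is present here. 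Lemma \ref{lm:adding_rows} lower bounds singular values of a row extension. Neither controls $\bm y$. For the same reason, your end-of-range discussion (using $r<n/8$ and Lemma \ref{lm:gauss_extension} ``near the end'') is unnecessary: every target stage $u\in(4r,n)$ comes from source stage $u-4r$, whose $4r$ fresh columns exist.

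The missing step is a short exact identity. Let $G_\star$ be the next $4r$ columns and $J$ your $E'$. Elimination gives $JE_tG_\star=0$, i.e. $JY_t\,(Q_t^TG_\star)=-JX_tG_\star$. Conditional on $\mathcal F_t$, $Q_t^TG_\star$ is a $\min(t,r)\times 4r$ Gaussian matrix, so it has full row rank almost surely. Hence $JY_t=-JX_tG_\star(Q_t^TG_\star)^\dagger$, and
\[E_{t+4r}=J\bigl(X_t-X_tG_\star(Q_t^TG_\star)^\dagger Q_t^T\bigr).\]
This eliminates $Y_t$, whose size is uncontrolled, altogether. Every row of $E_{t+4r}$ then has norm at most $2^{4r}\bigl(\zeta+\max_i\|(X_tG_\star)_{i,[4r]}\|_2\,\|(Q_t^TG_\star)^\dagger\|_2\bigr)$.

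The two random factors are handled as follows:
\begin{itemize}
\item Given $\mathcal F_t$, each row of $X_tG_\star$ is Gaussian with variance at most $\zeta^2$. Lemma \ref{lm:gauss_bounds} with a union bound over $n$ rows gives $\max_i\|(X_tG_\star)_{i,[4r]}\|_2^2\le C_q\zeta^2(r+\lambda)$.
\item Lemma \ref{lm:wide_gauss} (not Lemma \ref{lm:gauss_extension}) with $x=(q+1)\lambda$ gives $\|(Q_t^TG_\star)^\dagger\|_2^2\le C_q r^{-1}e^{C_q\lambda/r}$. This is the actual source of the $e^{C\lambda/r}$ factor.
\end{itemize}
Both quantities depend only on $\mathcal F_t$ and $G_\star$, so the data-dependent pivot choices enter solely through $\|J_{i,[n-t]}\|_1\le 2^{4r}$. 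This confirms your intuition that no union bound over pivot sets is needed.
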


In order to prove our desired proposition, we require the following two lemmas. The first decomposes $E_t$ into a controlled part and a part of rank at most $r$. The second shows that adding $4r$ columns successfully treats the uncontrolled portion, and, for $r$ small enough, does not have enough time to do additional harm during those $4r$ steps.

\begin{lemma}\label{lm:control}
For every $t<n$ and $r\ge 1$, there is a $\mathcal{F}_t$-measurable decomposition
\[E_t = X_t + Y_t Q_t^T,\]
where $Q_t \in \R^{ n \times \min(t,r)}$ satisfies $Q_t^T Q_t = I$. If $t \le r$, every row of $X_t$ has norm one. If $t>r$ and the $\mathcal{H}_t$-measurable condition $\Sigma_r(T_t) \le \tau$ holds, then
\[\P\left[  \max_{i} \|(X_t)_{i,[n]}\|_2^2 > 1 + (2+4x)\tau \, \Big| \, \mathcal{H}_t \right] \le n e^{-x} \qquad \text{for all} \quad x >0.\]
\end{lemma}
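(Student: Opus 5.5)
Write $E_t = [-G_{\pi_t^c,[t]}T_t^{-1}\;\; I]P_t$. The identity block has orthonormal rows supported on the non-pivot coordinates. The other block is $-G_{\pi_t^c,[t]}T_t^{-1}$, supported on the pivot coordinates. The plan is to split $T_t^{-1}$ by its SVD into a bad part, carrying the $r$ largest singular values of $T_t^{-1}$, and a controlled remainder.

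\emph{Case $t\le r$.} Take $Q_t = P_t^T\begin{bmatrix} I_t \\ 0\end{bmatrix}\in\R^{n\times t}$, whose columns are the standard basis vectors of the pivot coordinates. Set $Y_t = -G_{\pi_t^c,[t]}T_t^{-1}$ and $X_t = [0\;\; I]P_t$. Then $E_t = X_t + Y_tQ_t^T$, each row of $X_t$ is a standard basis vector of norm one, $Q_t^TQ_t = I$, and everything is $\mathcal{F}_t$-measurable.

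\emph{Case $t>r$.} Write $T_t^{-1} = \sum_{i=1}^t \sigma_i(T_t)^{-1}\bm v_i\bm u_i^T$, where $\bm v_i,\bm u_i$ are the right and left singular vectors of $T_t$. Order the terms so that the last $r$ are those with the largest values $\sigma_i(T_t)^{-1}$, i.e.\ $i=t-r+1,\ldots,t$. Let $V\in\R^{t\times r}$ collect the corresponding $\bm v_i$. Define the truncated inverse $R = T_t^{-1}(I - VV^T)$, so that $\|R\|_F^2 = \Sigma_r(T_t)$ and $\|R\|_2^2\le \Sigma_r(T_t)$. Set
\[Q_t = P_t^T\begin{bmatrix} V\\ 0\end{bmatrix},\qquad Y_t = -G_{\pi_t^c,[t]}T_t^{-1}V,\qquad X_t = [-G_{\pi_t^c,[t]}R\;\; I]P_t .\]
Then $X_t + Y_tQ_t^T = E_t$, since $RV V^T$ is absent from $R$ and $T_t^{-1}VV^T$ restores it. We have $Q_t^TQ_t = V^TV = I$, and all pieces are determined by $G_{[n],[t]}$, since the pivots $\pi_t$ are determined by the first $t$ columns. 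Hence the decomposition is $\mathcal{F}_t$-measurable. The map $T_t\mapsto R$ is $\mathcal{H}_t$-measurable, so $R$ is fixed once we condition on $\mathcal{H}_t$.

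\emph{Row norms.} The $i$-th row of $X_t$ has squared norm $1 + \|\bm g_i^T R\|_2^2$, where $\bm g_i^T$ is the corresponding row of $G_{\pi_t^c,[t]}$. Conditional on $\mathcal{H}_t$, the rows $\bm g_i$ are (independently) standard Gaussian conditioned to lie in the set $\mathcal{K}_t$ of \eqref{eq:convex}. This set is convex and symmetric and has positive Gaussian measure, because partial pivoting only requires that the non-pivot rows have entries of $\bm x\hat U^{-1}$ bounded by one in magnitude. Apply Lemma \ref{lm:gauss_convex} with $\bm v = 0$ and $M = R$ to get
\[\|\bm g_i^TR\|_2^2 \le 2\|R\|_F^2 + 4x\|R\|_2^2 \le (2+4x)\tau\]
with probability at least $1-e^{-x}$. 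A union bound over the at most $n$ rows gives the claim.

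The main obstacle is justifying the conditional law. One must verify that, given $\mathcal{H}_t$, the non-pivot rows are exactly i.i.d.\ Gaussians restricted to $\mathcal{K}_t$. The reason is that the event that the pivots are $\pi_t$ given $T_t$ factorizes as the condition, for each non-pivot row separately, that its multipliers $\bm x\hat U^{-1}$ have sup norm at most one. Tie cases have probability zero. The remaining steps are direct.
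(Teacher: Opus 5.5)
Your argument follows the paper's proof. You split $T_t^{-1}$ along its SVD into a rank-$r$ piece plus a remainder, set $X_t=[-G_{\pi_t^c,[t]}R\;\,I]P_t$, and note that, given $\mathcal{H}_t$, the non-pivot rows are Gaussians conditioned on the symmetric convex set $\mathcal{K}_t$. You then apply Lemma \ref{lm:gauss_convex} with zero mean and $M=R$ fixed, and finish with a union bound. Your $t\le r$ case is also the paper's.

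There is one step that is false as written: you build $V$ from the $\bm v_i$. With your own convention $T_t^{-1}=\sum_i\sigma_i(T_t)^{-1}\bm v_i\bm u_i^T$, the vectors to project out on the \emph{right} of $T_t^{-1}$ are the $\bm u_i$. These are the left singular vectors of $T_t$, i.e.\ the right singular vectors of $T_t^{-1}$. For a general orthonormal $V$ one has $\|T_t^{-1}(I-VV^T)\|_F^2=\sum_i\sigma_i(T_t)^{-2}\|(I-VV^T)\bm u_i\|_2^2$, which need not equal $\Sigma_r(T_t)$.

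A concrete counterexample: take $T_t=\begin{bmatrix}0&a\\1&0\end{bmatrix}$ with $0<a<1$ and $r=1$. Then $\sigma_2=a$, $\bm v_2=\bm e_2$, $\bm u_2=\bm e_1$, and $T_t^{-1}=\begin{bmatrix}0&1\\a^{-1}&0\end{bmatrix}$. Your choice $V=\bm e_2$ gives $R=T_t^{-1}\bm e_1\bm e_1^T=a^{-1}\bm e_2\bm e_1^T$. So $\|R\|_F^2=a^{-2}$, whereas $\Sigma_1(T_t)=1$.

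The identity $E_t=X_t+Y_tQ_t^T$ still holds, since it is true for any $V$ with orthonormal columns. What fails is the bound $\|R\|_F^2,\|R\|_2^2\le\tau$ that you feed into Lemma \ref{lm:gauss_convex}, so the tail estimate does not follow.

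The fix is to take $V=[\bm u_{t-r+1}\,\cdots\,\bm u_t]$. Then $R=\sum_{i\le t-r}\sigma_i(T_t)^{-1}\bm v_i\bm u_i^T$, which is exactly the paper's $Z_t$, and $T_t^{-1}V$ is the paper's $H$. With that correction the rest of your argument goes through verbatim.
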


\begin{proof}
When $t \le r$, we may simply take $X_t = [0 \; I]P_t$ and put the rest in the rank $r$ part. When $t > r$, consider a singular value decomposition
\[T_t^{-1} = Z_t + H V^T,\]
where $V^TV = I_r$ and $Z_t$ contains the $t-r$ smallest inverse singular values. Now, let
\[ X_t = [ - G_{\pi^c_t,[t]}Z_t \; I ]P_t, \qquad Y_t = - G_{\pi^c_t,[t]} H, \qquad \text{and} \qquad Q_t = P_t^T \begin{bmatrix} V \\ 0 \end{bmatrix}.  \]
We have $\|Z_t\|_2^2 \le \|Z_t\|_F^2 = \Sigma_r(T_t) \le \tau$, and each row of $X_t$ has squared two-norm $1 + \|G_{i,[t]} Z_t\|_2^2$. Note that, conditional on $\mathcal{H}_t$, the unpivoted rows at step $t$ are distributed as a standard Gaussian conditioned to lie in $\mathcal{K}_t$. Applying Lemma \ref{lm:gauss_convex} with zero mean and taking a union bound provides our desired result.
\end{proof}

\begin{lemma}\label{lm:fix}
Let $n - t> 4r$, condition on $\mathcal{F}_t$, and let $E_t = X_t + Y_t Q_t^T$ be as in Lemma \ref{lm:control}. If every row of $X_t$ has two-norm at most $\zeta$, then, for every $q \ge 1$,
\[\max_i \|(E_{t+4r})_{i,[n]}\|_2 \le \zeta \, e^{C_q\left(r+\frac{\lambda}{r}\right)},\]
except with conditional probability at most $C n^{-q}$, where $C$ and $C_q$ do not depend on $\|Y_t\|_2$.
\end{lemma}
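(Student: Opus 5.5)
The idea is to write $E_{t+4r}$ as a short combination of rows of $E_t$, and then use the $4r$ freshly revealed Gaussian columns to show that the rank-$\min(t,r)$ part $Y_tQ_t^T$ must nearly cancel in every row of $E_{t+4r}$. No estimate on $Y_t$ itself will be needed.

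\textbf{Step 1: structure of $E_{t+4r}$.} Let $N = G_{[n],(t,t+4r]}$ be the next $4r$ columns; they are independent of $\mathcal{F}_t$. The Schur complement after step $t$ restricted to these columns is $S = E_t N \in \R^{(n-t)\times 4r}$. Steps $t+1,\dots,t+4r$ of partial pivoting on $G$ are exactly partial pivoting on $S$. Writing $S_{\mathrm{piv}}$ for the $4r$ pivot rows and $S_{\mathrm{rest}}$ for the rest, we get, up to row ordering,
\[E_{t+4r} = \bigl[-S_{\mathrm{rest}}S_{\mathrm{piv}}^{-1}\;\; I\bigr]\,\tilde P\, E_t ,\]
with $\tilde P$ a permutation. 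Hence every row of $E_{t+4r}$ has the form $\bm w^T = \bm a^T E_t$, where $\bm a = \bm e_i - \sum_{\ell} c_\ell \bm e_{p_\ell}$ and $\bm c^T$ is a row of $S_{\mathrm{rest}}S_{\mathrm{piv}}^{-1}$. For this elimination, $\bm c^T = \bm\ell^T \hat L^{-1}$, where $\hat L$ is the unit lower triangular factor of $S_{\mathrm{piv}}$ and $\bm\ell^T$ is the corresponding row of the multipliers. Both have entries bounded by $1$ by the partial pivoting rule. The standard doubling argument then gives $\|\bm c\|_1 \le 2^{4r}-1$, and so $\|\bm a\|_1 \le 2^{4r}$. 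This is the only place the greedy property is used, and it is the same mechanism as the $2^{n-1}$ worst-case bound.

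\textbf{Step 2: the controlled part.} Decompose $\bm w^T = \bm a^T X_t + \bm u^T Q_t^T$ with $\bm u^T = \bm a^T Y_t$. By the triangle inequality, $\|\bm a^T X_t\|_2 \le \|\bm a\|_1 \zeta \le 2^{4r}\zeta$. The difficulty is that $\bm a$ depends on $N$ through the pivot choices, so every bound involving $\bm a$ must be uniform over rows.

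\textbf{Step 3: killing the low-rank part.} Since $E_{t+4r}$ annihilates the first $t+4r$ columns of $G$, we have $\bm w^T N = 0$, that is,
\[\bm u^T (Q_t^T N) = -\bm a^T X_t N.\]
Conditional on $\mathcal{F}_t$, $Q_t$ is fixed with orthonormal columns, so $Q_t^TN$ is a Gaussian matrix of size $\min(t,r)\times 4r$. Lemma \ref{lm:wide_gauss} with $x = q\lambda$ gives $\|(Q_t^TN)^\dagger\|_2^2 \le 2e^{2q\lambda/r+2}/r$ except with probability $n^{-q}$. Since this matrix has full row rank, $\|\bm u\|_2 \le \|(Q_t^TN)^\dagger\|_2\,\|\bm a^T X_t N\|_2$.

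For the right-hand side, use $\|\bm a^T X_tN\|_2 \le \|\bm a\|_1 \max_j \|(X_t)_{j,[n]}N\|_2$. Each $(X_t)_{j,[n]}N$ is a Gaussian $4r$-vector with covariance at most $\zeta^2 I$. A $\chi^2$ tail bound plus a union bound over the $n$ rows then gives $\max_j\|(X_t)_{j,[n]}N\|_2 \le C\zeta\sqrt{r + q\lambda}$, except with probability $n^{-q}$.

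\textbf{Step 4: combine.} Putting the pieces together,
\[\|\bm w\|_2 \le 2^{4r}\zeta\Bigl(1 + C\sqrt{(r+q\lambda)/r}\;e^{q\lambda/r+1}\Bigr) \le \zeta\, e^{C_q(r+\lambda/r)}.\]
This holds for all rows simultaneously, with total failure probability at most $2n^{-q}$, and nothing in it depends on $Y_t$. The main care point is Step 1: one must check that the multiplier bound $\|\bm c\|_1\le 2^{4r}$ holds for every non-pivot row, not only for the rows of $U$. One must also make sure the union bound in Step 3 runs over all $n$ rows of $X_t$ before pivots are known, since the pivots depend on $N$. The case $t \le r$ is identical with $Q_t$ having $t$ columns.
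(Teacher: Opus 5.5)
Your proposal is correct and essentially the paper's argument. You use the same annihilation identity $E_{t+4r}G_\star=0$ to eliminate the $Y_t$ term through $(Q_t^TG_\star)^\dagger$, the same $2^{4r}$ row $1$-norm bound on the $4r$-step elimination matrix, and the same tail bounds from Lemmas \ref{lm:gauss_bounds} and \ref{lm:wide_gauss}. The only difference is that you work row by row with $\bm u^T=\bm a^TY_t$, where the paper writes the matrix identity $E_{t+4r}=J\bigl(X_t-X_tG_\star(Q_t^TG_\star)^\dagger Q_t^T\bigr)$.
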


\begin{proof}
Let $G_\star = G_{[n],[t+4r]\setminus[t]}$ contain the next $4r$ columns of the matrix, ordered according to $E_t$, and consider the result of $4r$ further steps of elimination applied to
\[E_t G_\star = X_t G_\star + Y_t (Q_t^T G_\star) \in \R^{(n-t)\times 4r}.\]
Note that, conditional on $\mathcal{F}_t$, $Q_t^T G_\star \in \R^{\min(t,r) \times 4r}$ is Gaussian. Let $P$ permute the rows indexed by $\pi_t^c$ so that the next $4r$ pivot rows come first, in pivot order, and the remaining rows follow, in the order of $\pi_{t+4r}^c$. Let the matrix $J \in \R^{(n-t-4r)\times(n-t)}$ encode the row operations performed during these additional $4r$ elimination steps, with its remaining rows returned to the order of $\pi^c_{t+4r}$. In particular, 
\[J = [- (PE_t G_\star)_{[n-t]\setminus[4r],[4r]} ((PE_t G_\star)_{[4r],[4r]})^{-1} \;\, I_{n-t-4r}]P.\]
Notably, $J E_t = E_{t+4r}$ and $J E_t G_\star = 0$, the result of elimination. Therefore,
\[0 =J E_t G_\star = J X_t G_\star + J Y_t (Q_t^T G_\star) = (J X_t G_\star (Q_t^T G_\star)^\dagger + J Y_t) (Q_t^T G_\star),\]
and so 
\[E_{t+4r} = J X_t  +J Y_t Q_t^T = J(X_t -  (X_t G_\star) (Q_t^T G_\star)^\dagger Q_t^T).  \]
Note that we have removed the dependence on $Y_t$ and its size, replacing it by the controlled portion $(X_t G_\star)$ and a Gaussian pseudoinverse $(Q_t^T G_\star)^\dagger$. Because $J$ encodes $4r$ steps of partial pivoting, it is the product of $4r$ matrices, each of whose rows are of the form $\bm e_i - \ell_i \bm e_j$ for $|\ell_i| \le 1$, and so $\|J_{i,[n-t]}\|_1 \le 2^{4r}$ for all rows. Therefore, by the triangle inequality,
\begin{align*}
 \|(E_{t+4r})_{i,[n]}\|_2 &\le \sum_{j=1}^{n-t}|J_{ij}|\|(X_t -  X_t G_\star (Q_t^T G_\star)^\dagger Q_t^T)_{j,[n]}\|_2 \\
&\le 2^{4r}\left( \zeta + \max_{i} \|(X_t G_\star)_{i,[4r]}\|_2 \|(Q_t^T G_\star)^\dagger\|_2 \right)
\end{align*}
for all $i$. Each row of $X_t G_\star$ has independent Gaussian entries with variance at most $\zeta^2$. By Lemmas \ref{lm:gauss_bounds} and \ref{lm:wide_gauss}, with $x = (q+1) \lambda$, we have
\[ \max_{i} \|(X_t G_\star)_{i,[4r]}\|_2^2 \le C_q \zeta^2(4r + \lambda) \qquad \text{and} \qquad \|(Q_t^T G_\star)^\dagger\|_2^2 \le \frac{C_q}{r} e^{C_q \lambda/r}\]
with conditional probability at least $1 - C n^{-q}$. Combining these two bounds proves our desired result.
\end{proof}

Using Lemmas \ref{lm:control} and \ref{lm:fix}, the proof of our desired proposition follows fairly quickly.

\begin{proof}[Proof of Proposition \ref{prop:truncated}]
Let $q = p + 4$ and apply Lemma \ref{lm:control} at every starting stage $t < n - 4r$ with $x = q \lambda$. Then
\[\max_i \|(X_t)_{i,[n]}\|_2^2 \le C_q (1 + \lambda)(1 + \tau)\]
for all such $t$, with probability at least $1 - \eta - n^{2-q}$, where $\eta$ accounts for the truncated inverse bound failing, and $n^{1-q} \times n$ accounts for each of the desired bounds failing. Now, apply Lemma \ref{lm:fix} at each of these starting stages and take a union bound. For $u \le 4r$, the deterministic bound $\|(E_{u})_{i,[n]}\|_2 \le 2^u$ suffices. We have
\[\max_{u<n} \max_i \|(E_{u})_{i,[n]}\|_2  \le \sqrt{1 + \tau} \, e^{C_q\left(r + \frac{\lambda}{r}\right)}\]
with probability at least $1 - \eta - C n^{2-q}$, after absorbing the $\sqrt{1+\lambda}$ into the exponential.
At a fixed stage $t$, the row bound is $\mathcal{F}_t$-measurable and each Schur complement entry divided by its elimination row norm is a standard Gaussian. Applying the standard bound $\P[|X|>h] \le 2 e^{-\frac12 h^2}$ for $X \sim N(0,1)$ for $h = \sqrt{2 q \lambda}$ and union bounding over all the Schur complement entries, which contain all the entries of $U$, we get that
\[ \|U\|_{\max} \le \sqrt{ 1 + \tau } \, e^{C_p\left(r + \frac{\lambda}{r}\right)}\]
with probability at least $1-\eta - C n^{3-q}$. Note that throughout this argument, we take union bounds over stage-wise conditional estimates, and do not use estimates conditioned on success at all stages. 

Finally, by partial pivoting, $\|L\|_{\max} =1$, and $\|G\|_{\max} \ge 1$ with all but an exponentially small probability, completing the proof.
\end{proof}

\subsection{Extending a Selected Block} In order to bound $\Sigma_r(T_t)$, we require a lemma that bounds the event that a leading pivot block has a good truncated inverse but its extension has too many small singular values, which we apply inductively in the following subsection.
\begin{lemma}\label{lm:extension}
Let $m + 4k \le n$, $k<m$, $d\ge 16$, and $k \ge 100d$. For every $\beta \ge 1$, there is a constant $C_\beta$ such that, for every $0<\mu < \sqrt{k}$,
\[\P\left[\sigma_{m-k}(T_m)\ge \mu, \quad  \Sigma_k(T_m) \le 2 k \mu^{-2}, \quad \text{and} \quad 
\sigma_{m+4k-d}(T_{m+4k}) \le \frac{\mu d}{k} e^{-C_\beta \left(1 + \frac{k \lambda}{d^2}\right)} \right] \le n^{-\beta}.\]
\end{lemma}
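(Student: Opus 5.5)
The plan is to build $T_{m+4k}$ from $T_m$ in two moves: first append the $4k$ new columns to the $m$ old pivot rows, then append the $4k$ new pivot rows. Order the rows of $T_{m+4k}$ so that the first $m$ are the pivot rows $\pi_m$, and write
\[
T_{m+4k} = \begin{bmatrix} M' \\ B \end{bmatrix}, \qquad M' = [\,T_m \;\; W\,] \in \R^{m \times (m+4k)},
\]
where $W = G_{\pi_m,[m+4k]\setminus[m]}$ and $B$ collects the next $4k$ pivot rows. Conditional on $\mathcal{F}_m$, the block $W$ is Gaussian. On the event $\sigma_{m-k}(T_m)\ge\mu$, $\Sigma_k(T_m)\le 2k\mu^{-2}$, Lemma \ref{lm:gauss_extension} (with $\Delta=0$, $M=T_m$, $x=C_\beta\lambda$) gives, except with probability $3n^{-C_\beta}$,
\[
\|M'^\dagger\|_2^2 \le C\mu^{-2}e^{C\lambda/k}, \qquad \|M'^\dagger\|_F^2 \le Ck\mu^{-2}e^{C\lambda/k}.
\]
Since $k\ge 100d$ and $d\ge16$, we have $\lambda/k \le k\lambda/d^2$, so these factors are absorbed into the allowed loss. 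I will then apply Lemma \ref{lm:adding_rows} with $n\to m$, $k\to 4k$ and $j=\lceil d/2\rceil<2k$, which bounds $\sigma_{m+4k-d}(T_{m+4k})$ from below. What remains is a lower bound on $\sigma_{4k-j}(BQ)$, where $Q$ spans $\mathrm{null}(M')$, and an upper bound on $\sigma_{j+1}(BM'^\dagger)$.

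The difficulty is that the rows of $B$ are chosen adaptively by partial pivoting, using the new columns. I will remove this by a union bound over all candidate sets $S\subset\pi_m^c$ of $4k$ rows, of which there are at most $n^{4k}=e^{4k\lambda}$. For a fixed $S$, condition on $\mathcal{H}_m$ and on $W$; both $Q$ and $M'^\dagger$ are measurable with respect to this information.
\begin{itemize}
\item The first $m$ coordinates of the rows of $B_S$ are independent Gaussians conditioned on $\mathcal{K}_m$.
\item The last $4k$ coordinates form a $4k\times 4k$ Gaussian block $B_S^{(2)}$, independent of everything else.
\end{itemize}

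For the upper bound, Lemma \ref{lm:gauss_convex} (zero mean) controls $\|B_S M'^\dagger\|_F^2 \lesssim (k+k\lambda)\,\|M'^\dagger\|_F^2$ row by row, and $\sigma_{j+1}\le\|\cdot\|_F/\sqrt{j+1}$ then gives $\sigma_{j+1}(B_SM'^\dagger)\lesssim (k/\sqrt d)\,\mu^{-1}e^{C\lambda/k}\sqrt\lambda$. Its failure probability is $e^{-Ck\lambda}$ with a large constant, which survives the $n^{4k}$ union bound.

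The main obstacle is the small-ball estimate for $\sigma_{4k-j}(B_SQ)$. Write $Q=\begin{bmatrix}Q_1\\ Q_2\end{bmatrix}$, so that $B_SQ = B_S^{(1)}Q_1 + B_S^{(2)}Q_2$, where $B_S^{(2)}$ is Gaussian and independent of the shift $B_S^{(1)}Q_1$.
\begin{itemize}
\item \textbf{Controlling $Q_2$.} The null vectors are $(x,y)$ with $x=-T_m^{-1}Wy$. Split $T_m^{-1}=Z+HV^T$ as in Lemma \ref{lm:control}. On the codimension-$k$ subspace $\{y: V^TWy=0\}$ we get $\|x\|\le \|Z\|_F\|W\|_2\|y\|\lesssim \sqrt{k}\mu^{-1}\sqrt{m}\,\|y\|$. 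Hence $Q_2$ has at least $3k$ singular values bounded below by roughly $\mu/\sqrt{km}$. I may need to use $\|WV\|$-type Gaussian bounds instead of $\|W\|_2$ to avoid losing $\sqrt m$. If the factor really matters, the fix is to work with $M'$ rescaled, or to compare with $\|M'^\dagger\|$, which already encodes this.
\item \textbf{Small-ball step.} Restricting to those $3k$ directions, I will invoke Lemma \ref{lm:gauss_sing_val} with a shift $\Delta$, or its rectangular analogue obtained by adding Gaussian columns. This gives $\P[\sigma_{4k-j}\le cj\theta\cdot(\text{scale})/\sqrt{k}]\le n^{j+1}\theta^{j^2/4}$.
\end{itemize}
Choosing $\theta=e^{-C_\beta(1+k\lambda/d^2)}$ makes $\theta^{d^2/16}\le e^{-C_\beta k\lambda}$, which beats the $e^{4k\lambda}$ union bound together with $n^{j+1}$. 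This is exactly the source of the $k\lambda/d^2$ exponent.

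Finally, I plug the three bounds into Lemma \ref{lm:adding_rows}:
\[
\sigma_{m+4k-d}(T_{m+4k}) \gtrsim \left(\mu^{-1}e^{C\lambda/k}+\frac{(k/\sqrt d)\,\mu^{-1}\sqrt\lambda}{d\theta\mu/k\cdot(\cdots)}\right)^{-1}.
\]
Tracking the powers of $k$ and $d$, the target $\mu d/k$ should appear up to factors absorbed into $e^{-C_\beta(1+k\lambda/d^2)}$. Polylogarithmic losses such as $\sqrt\lambda$ are absorbed using $k\lambda/d^2\ge\lambda/k$, or by enlarging $C_\beta$. I expect the bookkeeping of the $k$ and $d$ powers in the $Q_2$ step to be the delicate part.
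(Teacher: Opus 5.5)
There is a genuine gap, and it sits in the step you flagged as the main obstacle. You condition on $\mathcal{H}_m$ (and $W$) and union bound over $S\subset\pi_m^c$. Under that conditioning, the first $m$ coordinates of the candidate rows are Gaussians restricted to $\mathcal{K}_m$, and you have no anti-concentration for them (Lemma \ref{lm:gauss_convex} gives only upper tails). So the only usable randomness in $B_SQ$ is $B_S^{(2)}Q_2$.

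That randomness is degenerate in up to $k$ directions. Indeed $Q_2=(I+W^TT_m^{-T}T_m^{-1}W)^{-1/2}$ has up to $k$ arbitrarily small singular values, coming from the bottom $k$ singular values of $T_m$, which the hypothesis leaves unconstrained. Restricting to the remaining $3k$ directions controls at most $\sigma_{3k-j}(B_SQ)$. Lemma \ref{lm:adding_rows}, however, needs $\sigma_{4k-j}(B_SQ)$ to reach the index $m+4k-d$, and since $k\ge 100d$ the $k$ lost directions cannot be hidden in the allowance $d$.

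This cannot be repaired inside your conditioning. Suppose $T_m=\hat L\hat U$ has $k$ singular values equal to $\epsilon$, with right singular vectors $\bm b_i$, and $\hat L$ is well conditioned. Then every $\bm x\in\mathcal{K}_m$ satisfies $|\bm x\bm b_i|\le\|\bm x\hat U^{-1}\|_2\|\hat U\bm b_i\|_2\le\sqrt{m}\,\epsilon\,\|\hat L^{-1}\|_2$. Hence $T_{m+4k}$ is of size $O(\epsilon)$ on the $k$-dimensional span of the vectors $(\bm b_i,0)$. Because $d<k$, the bad event then holds with conditional probability one once $\epsilon$ is small, while the other two conditions of the lemma can still hold. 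The lemma is true only because such $\mathcal{H}_m$ are themselves rare, and no bound conditional on $\mathcal{H}_m$ can see this.

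Even on the good directions, the hypothesis only guarantees $\sigma(Q_2)\gtrsim\mu/\sqrt{k}$. That shrinks the small-ball scale by a polynomial factor in the regime $\mu\approx k/\sqrt n$ where the lemma is applied.

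The missing idea is the deletion trick used in the paper. For each fixed $S$, run the first $m$ steps of partial pivoting with the rows $S$ removed, giving $\pi_m^S$. Conditional on all rows outside $S$, the block $G_{\pi^S_m,[m+4k]}$ is fixed and $G_{S,[m+4k]}$ is an \emph{unconditioned} Gaussian matrix. So $G_{S,[m+4k]}Q_S$ is an exact $4k\times 4k$ Gaussian matrix, and Lemma \ref{lm:gauss_sing_val} bounds $\sigma_{4k-\lfloor d/2\rfloor}$ below by $\frac{d}{\sqrt k}e^{-C_\beta(1+k\lambda/d^2)}$ with no bad directions at all. Whether \eqref{eq:extend_estimates} holds for the candidate block is determined by the rows outside $S$, so it is simply imposed as a condition. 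For the true next pivot set, $\pi^S_m=\pi_m$, because deleting rows not chosen in the first $m$ steps does not change those pivots.

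Full Gaussianity also repairs a second, quantitative problem. Your bound $\sigma_{j+1}(B_SM'^\dagger)\lesssim(k/\sqrt d)\,\mu^{-1}\sqrt\lambda$, obtained from row-wise tails and $\|\cdot\|_F/\sqrt{j+1}$, is off by a factor $\sqrt{k\lambda/d}$ from the needed $\sqrt{k}\,\mu^{-1}e^{C_\beta(1+\lambda/d)}$. That factor cannot be absorbed into $e^{C_\beta(1+k\lambda/d^2)}$ when $k\lambda/d^2=O(1)$. For example, with $k\approx d^2/\lambda$, as in Proposition \ref{prop:truncated_bound}, it equals $\sqrt d$. The paper instead applies the second bound of Lemma \ref{lm:gauss_bounds} to the whole $4k$-row Gaussian block with $x=(4k+\beta+1)\lambda$.
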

\begin{proof}
Let $s = 4k$ and suppose the $\mathcal{F}_m$-measurable event 
\[\sigma_{m-k}(T_m)\ge \mu \qquad \text{and} \qquad  \Sigma_k(T_m) \le 2 k \mu^{-2}\]
holds. Conditional on $\mathcal{F}_m$, the pivots $\pi_m$ are fixed and $G_{\pi_m,[m+s]}$ is the fixed block $T_m$ and $s$ additional Gaussian columns. By Lemma \ref{lm:gauss_extension} with $x = (\beta+1) \lambda$,
\begin{equation}\label{eq:extend_estimates}
    \|(G_{\pi_m,[m+s]})^\dagger\|_2^2 \le \mu^{-2} e^{C_\beta\left(1+ \frac{\lambda}{k}\right)} \qquad \text{and} \qquad \|(G_{\pi_m,[m+s]})^\dagger\|_F^2 \le k \mu^{-2} e^{C_\beta\left(1+ \frac{\lambda}{k}\right)}
\end{equation}
with probability at least $1-3n^{-(\beta+1)}$. Note that this concerns the extension of the rows specifically chosen by $\pi_m$, and so this cost is only paid once.

Now, to treat the next $s$ pivot rows, we fix a set $S$, $|S|=s$, and consider the result of $m$ steps of elimination with partial pivoting without the rows indexed by $S$. Let $\pi^{S}_{m}$ be the first $m$ pivots in that situation. Conditional on all the rows outside of $S$, $G_{\pi^S_m,[m+s]}$ is fixed and $G_{S,[m+s]}$ is Gaussian. In particular, whether \eqref{eq:extend_estimates} holds for $G_{\pi^S_m,[m+s]}$ is already determined. We do not condition on $S$ being the next pivot set. Now, on the event that  \eqref{eq:extend_estimates} does hold for $G_{\pi^S_m,[m+s]}$, let the columns of $Q_S$ be an orthonormal basis for $\mathrm{null}(G_{\pi^S_m,[m+s]})$. Using Lemma \ref{lm:gauss_bounds} with $x = (s+\beta+1)\lambda$ and \eqref{eq:extend_estimates},
\[\sigma_{\lfloor d/2 \rfloor +1}(G_{S,[m+s]} (G_{\pi^S_m,[m+s]})^\dagger) \le \sqrt{k} \mu^{-1} e^{C_\beta\left(1+ \frac{\lambda}{d}\right)}\]
with probability at least $1-n^{-s-\beta-1}$. By Lemma \ref{lm:gauss_sing_val} with $\theta = e^{-C_\beta\left(1+\frac{k\lambda}{d^2}\right)}$, $j = \lfloor d/2 \rfloor$ and $C_\beta$ a large enough constant so that $s^{j+1} \theta^{\frac14 j^2} \le n^{-s-\beta-1}$,
\[\sigma_{s-\lfloor d/2 \rfloor}(G_{S,[m+s]}Q_S)\ge \frac{d}{\sqrt{k}}e^{-C_\beta\left(1+\frac{k\lambda}{d^2}\right)}\]
with probability at least $1-n^{-s-\beta-1}$.

Because $2 \lfloor d/2 \rfloor \le d$, $\mu \le \sqrt{k}$, and $k \ge 100 d$, by Lemma \ref{lm:adding_rows},
\begin{equation}\label{eq:singval_rowextend}
\sigma_{m+s-d}\left( \begin{bmatrix} G_{\pi^S_m,[m+s]} \\ G_{S,[m+s]} \end{bmatrix} \right) \ge \frac{\mu d}{k} e^{-C_\beta\left(1+\frac{k\lambda}{d^2}\right)}
\end{equation}
for a sufficiently large constant $C_\beta$, where we have absorbed the factor $e^{C_\beta \frac{\lambda}{d}}$ in the numerator because $\lambda/d \le k \lambda/d^2$. There are at most $n^s$ choices for $S$. Therefore, the probability that some candidate satisfies \eqref{eq:extend_estimates} but fails \eqref{eq:singval_rowextend} is at most $2 n^{-\beta-1}$. Applying this to the actual next $s$ pivot rows $S$, for which $\pi^S_m = \pi_m$, completes the proof.
\end{proof}

\subsection{A Bound for the Truncated Inverse} Ideally, $\sigma_{t-k}(T_t) \gtrsim k/\sqrt{n}$ for $k\ge r$, which would imply $\Sigma_r(T_t) \lesssim n \sum_{k = r}^{\infty} 1/k^2 \lesssim n/r$. The following proposition proves such a bound, with an additional exponential term, due to the additional exponential term in Lemma \ref{lm:extension}.

\begin{proposition}\label{prop:truncated_bound}
For every $\gamma >0$, there exist constants $C_\gamma,n_\gamma$ such that, for $n \ge n_\gamma$, 
\[\max_{k<n} \Sigma_r(T_k) \le \frac{C_\gamma n}{r} e^{C_\gamma \left(\frac{\lambda}{r} +\log \log n \right)}.\]
with probability at least $1-n^{-\gamma}$, simultaneously for all $r \ge \lceil \sqrt{\lambda} \rceil$.
\end{proposition}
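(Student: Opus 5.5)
Since $\Sigma_r(T_k)$ is decreasing in $r$, it is enough to show that, with probability at least $1-n^{-\gamma}$, every $t<n$ and every scale $k$ in a dyadic family $k_\ell = 2^\ell \lceil\sqrt\lambda\rceil$ satisfy
\[\sigma_{t-k}(T_t) \ge \frac{k}{\sqrt n}\, e^{-C_\gamma(\frac{\lambda}{k} + \log\log n)}.\]
The statement then follows by summing. We have $\Sigma_r(T_t) \le \sum_{\ell:\,k_\ell \ge r/2} k_\ell\, \sigma_{t-k_\ell}(T_t)^{-2}$, and each term of this sum is at most $(n/k_\ell)\, e^{C(\lambda/k_\ell + \log\log n)}$. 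For $k_\ell \ge r/2$ the factor $e^{C\lambda/k_\ell}$ is at most $e^{2C\lambda/r}$, and the geometric sum of $n/k_\ell$ is $O(n/r)$. Because the bound is produced for all dyadic scales at once, it holds simultaneously for every $r \ge \lceil\sqrt\lambda\rceil$.

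The lower bound will come from an induction that propagates singular value information along a chain of extensions $T_m \to T_{m+4k}$, driven by Lemma \ref{lm:extension}. Fix a target $t$ and scale $k$. I would work with a geometrically decreasing chain of scales $k=k_0<k_1<\dots<k_J$, with $k_{i+1}\approx (4/3)k_i$, or a doubling variant, and with $J = O(\log n)$, so that $k_J \sim n$ or $k_J \ge t$. At the top of the chain I use a crude starting bound: for $m \le 4k_J$ or $k_J \gtrsim t$, the trivial fact $\sigma_{m-k}(T_m) \ge \sigma_{\min}$-type estimates are not needed, and it suffices to observe that $\Sigma_k(T_m)=0$ once $k \ge m$. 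For intermediate $m$, I would instead start from a fixed-size block, where Lemma \ref{lm:gauss_extension} alone applies.

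For the inductive step, suppose $\sigma_{m-k_{i+1}}(T_m) \ge \mu_{i+1}$ and $\Sigma_{k_{i+1}}(T_m) \le 2k_{i+1}\mu_{i+1}^{-2}$ hold at $m = t-4k_{i+1}$. Apply Lemma \ref{lm:extension} with $k=k_{i+1}$ and $d = k_i$; the lemma requires $k \ge 100d$, so the chain must actually decrease by a factor of $100$ at each step. This gives
\[\sigma_{t-k_i}(T_t) \ge \mu_{i+1}\frac{k_i}{k_{i+1}} e^{-C(1+\lambda k_{i+1}/k_i^2)}.\]
The difficulty is that $T_t$ is reached from a different starting block at each scale. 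To handle this, each scale is applied along its own arithmetic progression of times $m, m+4k, m+8k,\dots$, and a union bound is taken over the $O(n\log n)$ pairs $(t,k)$.

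The key bookkeeping is to set $\mu_i \approx k_i/\sqrt n$ times an error factor, and to verify the hypothesis $\Sigma_{k}(T_m)\le 2k\mu^{-2}$ recursively. That hypothesis itself follows from the lower bounds at all larger scales $k' \ge k$ at the same time $m$, by the same dyadic summation as in the first paragraph. The induction therefore runs downward in scale, over all times at once. Each step loses a multiplicative factor $e^{C(1+\lambda/k_i)}$, because $k_{i+1}/k_i$ is a constant. Over the $O(\log n)$ steps, the constant parts accumulate to $e^{C\log n}$, which is too much. This is the main obstacle. I expect the fix to be to stop the chain once $k_i \gtrsim n/\text{polylog}$ or once $\lambda/k_i$ is small, so that only $O(\log\log n)$ steps incur a genuine constant loss. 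The terms $e^{C\lambda/k_i}$ form a geometric series dominated by the smallest scale, which yields the total error $e^{C(\lambda/k+\log\log n)}$. The top of the chain, at scale about $n/\log^{C} n$, is handled directly: apply Lemma \ref{lm:gauss_sing_val} to the Gaussian block together with a union bound over the row sets selected by the pivots, as in the proof of Lemma \ref{lm:extension}. This gives $\sigma_{t-k}(T_t)\gtrsim k/\sqrt n$ up to polylog factors. Finally, all failure probabilities from Lemma \ref{lm:extension} with $\beta=\gamma+3$ are union bounded over the $O(n\log n)$ pairs $(t,k)$, which gives the claimed probability $1-n^{-\gamma}$.
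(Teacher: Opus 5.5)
Your architecture matches the paper's. You chain applications of Lemma \ref{lm:extension} toward larger scales at earlier times, recover the hypothesis $\Sigma_k(T_m)\le 2k\mu^{-2}$ by summing lower bounds at larger scales (your dyadic summation is fine), and use a crude union bound over candidate row sets via Lemma \ref{lm:gauss_sing_val} at the top. But the step you flag as the main obstacle is not resolved, and your proposed fix does not work.

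Each application of Lemma \ref{lm:extension} costs a factor $e^{-C_\beta(1+k\lambda/d^2)}$. The $e^{-C_\beta}$ part is paid at every step, however small $\lambda/d$ is, so what matters is the number of steps. With $k/d$ fixed at $100$, going from $d\approx\sqrt\lambda$ up to scale $n/\log^{C}n$ still takes $\Theta(\log n)$ steps. Stopping the chain there, or ``once $\lambda/k_i$ is small,'' does not make the intermediate steps free.

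Nor can you hand off to the direct bound much earlier. The union bound over row sets must beat the entropy $\log\binom{n}{t}$, which is of order $n$ for $t=n/2$. With $\theta=n^{-o(1)}$ (needed so the threshold loses only $n^{o(1)}$), this forces $d\gtrsim\sqrt{n/\log n}$, which again lies $\Theta(\log n)$ factor-$100$ steps above $\sqrt\lambda$. As written, your scheme gives only $\sigma_{t-d}(T_t)\ge \frac{d}{\sqrt n}n^{-c}$. That yields $\Sigma_r(T_t)\lesssim n^{1+2c}/r$ and an $n^{1/2+c}$ growth bound, not the stated $e^{C_\gamma(\lambda/r+\log\log n)}$ loss.

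The missing idea is to let the jump ratio grow with the scale; the paper uses $k(d)=\max(100d,\lceil d^2/\lambda\rceil)$.
\begin{itemize}
\item The cost in Lemma \ref{lm:extension} is governed by $k\lambda/d^2$, which for this choice is at most $\max(100\lambda/d,2)$. So each step still costs only $e^{C(1+\lambda/d)}$.
\item The factor $d/k$ in the lemma's threshold is harmless, because normalizing $\mu=g(k)\propto k/\sqrt n$ cancels it exactly.
\item Once $d_i>\lambda$, the chain squares in units of $\lambda$, since $d_{i+1}/\lambda\ge (d_i/\lambda)^2$. It therefore passes $n$ within $O(\log\log n)$ steps, and reaching $\lambda$ from $\sqrt\lambda$ at ratio $100$ also takes $O(\log\log n)$ steps.
\item The total exponent $h(d)=\sum_i(1+\lambda/d_i)$ is then $O(\lambda/d+\log\log n)$.
\end{itemize}
The base case matches this recursion. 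When $k(d)\ge t/8$, one has $t\lambda\lesssim d\lambda+d^2$, which is exactly the regime where the crude union bound over all $t$-row subsets with $\theta=e^{-C(1+\lambda/d)}$ succeeds. Your other starting point, ``$\Sigma_k(T_m)=0$ once $k\ge m$,'' is not usable, since Lemma \ref{lm:extension} requires $k<m$.
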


\begin{proof}
For an integer $d \ge \lceil \sqrt{\lambda} \rceil$, consider the recursion $k(d) = \max(100\, d,\lceil d^2/\lambda \rceil)$, with $d_0 = d$ and $d_{i+1} = k(d_i)$, and let
\[ h(d) = \sum_{i|d_i<n}\left(1 + \frac{\lambda}{d_i}\right) \qquad \text{and} \qquad g(d) = \frac{d}{\sqrt{n}} e^{-D_\gamma h(d)}.\]
We use a factor of $100$ so that Lemma \ref{lm:extension} applies, and use the quadratic term so that the sequence grows quickly once $d >\lambda$. The quantity $h(d)$ tracks the total exponential loss accumulated over each stage from Lemma \ref{lm:extension}, and $g(d)$ provides the iterated thresholds for singular values coming from Lemma \ref{lm:extension}. Note that $h(d)$ is non-increasing and $g(d)/d$ is non-decreasing in $d$. We aim to prove that
\begin{equation}\label{eq:sing_val_bounds}
\sigma_{t-d}(T_t) \ge g(d) \qquad \text{for all} \quad \lceil \sqrt{\lambda} \rceil \le d <t \le n,
\end{equation}
which implies our desired result. In particular, if \eqref{eq:sing_val_bounds} holds, then, for every $t > r \ge \lceil \sqrt{\lambda} \rceil$,
\[\Sigma_r(T_t)\le \sum_{d=r}^{t-1}\frac{1}{g(d)^2} \le \frac{r^2}{g(r)^2} \sum_{d=r}^{\infty} \frac{1}{d^2} \le \frac{2 r}{g(r)^2} = \frac{2n}{r}e^{2 D_\gamma h(r)}.\]
In addition,
\[h(r) \le C\left(\frac{\lambda}{r}+ \log \log n\right),\]
as $\sum_{i} \lambda/d_i \le \lambda/r \sum_{i \ge 0} 100^{-i} \le 2 \lambda/r$, and $h(d)$ involves at most $O(\log \log n)$ steps, $O(\log \log n)$ for $d_i$ to exceed $\lambda$ and another $O(\log \log n)$ steps for $d_i$ to exceed $n$. 

What remains is to prove \eqref{eq:sing_val_bounds} holds with probability $1- n^{-\gamma}$. We do so by induction on $t$. Let $\beta = \gamma + 2$, $D_\gamma$ be sufficiently large, and $n_\gamma$ be large enough that $\sqrt{\lambda} \ge 16$.

First, suppose that $k(d) \ge t/8$. There are $O(n^t)$ choices of $t$ rows, and so we may bound $\sigma_{t-d}(T_t)$ by union bounding over $G_{S,[t]}$ for all $|S|=t$. In particular, by Lemma \ref{lm:gauss_sing_val} applied $O(n^t)$ times with $\theta = c^{-1} e^{-C_\gamma \left(1 + \frac{\lambda}{d} \right)}$, the probability that 
\[\sigma_{t-d}(T_t)<g(d) = \frac{d}{\sqrt{n}} e^{-D_\gamma h(d)} \le   \frac{cd}{\sqrt{t}} \left(c^{-1} e^{-C_\gamma \left(1 + \frac{\lambda}{d}\right)} \right)\]
is at most
\[\exp\left((t+d+1) \lambda - \frac{d^2}{4}\left[C_\gamma \left(1+\frac{\lambda}{d}\right) - \log \frac{1}{c} \right] \right).\]
By assumption, $t \lambda \le 8 k(d)  \lambda \le C(d \lambda + d^2)$, so this probability is at most $n^{-\beta}$ for $C_\gamma$ sufficiently large.

Now, consider the case where $k(d)<t/8$, and let $u = t - 4k(d)$, so that $u>t/2>k(d)$. If our desired result \eqref{eq:sing_val_bounds} holds at stage $u$, then
\[\sigma_{u-k}(T_u) \ge g(k) \qquad \text{and} \qquad \Sigma_k(T_u) \le \sum_{\ell = k}^{u-1} g(\ell)^{-2} \le 2 k g(k)^{-2}. \]
Note that $g(k) \le k/\sqrt{n} \le \sqrt{k}$, so we may apply Lemma \ref{lm:extension} with $\mu = g(k)$, providing our desired inductive step with all but probability $n^{-\beta}$. Taking a union bound over the $<n^2$ pairs $(t,d)$, the total failure probability is at most $n^2 \times n^{-\beta} = n^{-\gamma}$, completing the proof.
\end{proof}

\subsection{A Proof of Theorem \ref{thm:gauss_pp}} Using Propositions \ref{prop:truncated} and \ref{prop:truncated_bound}, Theorem \ref{thm:gauss_pp} follows quickly.

\begin{proof}[Proof of Theorem \ref{thm:gauss_pp}]
Let $r = \lceil \sqrt{\log n} \rceil$. By Proposition \ref{prop:truncated_bound}, with exponent $\gamma = \alpha +1$,
\[\max_{k<n} \Sigma_r(\Pi(G)_{[k],[k]}) \le n e^{C_\alpha \sqrt{\log n}}\]
with probability at least $1 - n^{-(\alpha+1)}$. Combining this with Proposition \ref{prop:truncated}, with exponent $p=\alpha+1$, gives our desired result.
\end{proof}

\section{A Smoothed Analysis of Partial Pivoting}\label{sec:smooth}

Here, we extend the analysis of previous section to the smoothed setting $A + \epsilon G$, where $A \in \R^{n\times n}$ is fixed and $\epsilon >0$. We prove that the growth factor of such a matrix is at most $n^{O(1)}\epsilon^{-O(\log \log n)}$ with overwhelming probability.

\subsection{Adapting the Argument for General Means}

The argument in Section \ref{sec:partial} carries over to the general mean setting, up to some additional loss. By the scale invariance of the growth factor, we may replace $A + \epsilon G$ by $A/\epsilon + G$, so that our matrix remains unit variance but with a possibly very large mean. Let $d_0 = d$, $d_{i+1} = k(d_i)$, and define
\[f_{K}(d) = \prod_{i>0|d_i<n}\left(1+ \frac{K^2}{d_i}\right)\qquad \text{and} \qquad g_{K}(d) = \frac{d}{\sqrt{n}} \frac{e^{-D_\gamma h(d)}}{f_{K}(d)}.\]
In the proof, we set $K = \epsilon^{-1} \|A\|_2$. Tracking the argument, we lose an additional $f_K(r)^2$ when bounding the truncated inverses (the adaptation of Proposition \ref{prop:truncated_bound}) and an additional $(1+K)^3$ when converting truncated inverse control to control on the growth (the adaptation of Proposition \ref{prop:truncated}). Note that when $\|A\|_2 = 0$, both of these terms are one, recovering the original argument in Section \ref{sec:partial}.

\begin{theorem}\label{thm:smooth_general}
For every $\alpha >0$, there exist constants $C_\alpha,n_\alpha$ such that, for all $n \ge n_\alpha$, every $\Delta \in \R^{n \times n}$, $\epsilon >0$, and every $\lceil \sqrt{\lambda} \rceil \le r < n/8$,
\[\P\left[ \growth (\Pi(\Delta +\epsilon G)) > C_\alpha \left(1 + \frac{\|\Delta\|_2}{\epsilon} \right)^3 \sqrt{\frac{n}{r}} \, e^{C_\alpha \left(r + \frac{\lambda}{r}\right)} f_{\frac{ \|\Delta\|_2}{\epsilon}}(r) \right] < n^{-\alpha}.\]
\end{theorem}

\begin{proof}
Without loss of generality, let $\epsilon =1$. We define $B = \Delta + G$, $\pi_t$ to be the first $t$ pivot indices of $B$, $T_t = B_{\pi_t,[t]}$, and $K = \|\Delta\|_2$. $\mathcal{H}_t$ records $\pi_t$ and $T_t$, and $\mathcal{F}_t$ records $B_{[n],[t]}$. The elimination matrices are given by
\[E_t = [ - B_{\pi_t^c,[t]}T_t^{-1} \;  I]P_t\]
and we use the decomposition $E_t = X_t + Y_t Q_t^T$ from Lemma \ref{lm:control}, where $T_t^{-1} = Z_t + H V^T$, with
\[X_t = [-B_{\pi_t^c,[t]}Z_t \; I]P_t, \qquad Y_t = -B_{\pi_t^c,[t]}H, \qquad \text{and} \qquad Q_t = P_t^T \begin{bmatrix} V \\ 0 \end{bmatrix}.\]

First, we convert the truncated inverse bound into a bound on growth (i.e., modify Proposition \ref{prop:truncated}). Suppose $\Sigma_r(T_t)\le \tau$ for every $t<n$ with probability at least $1-\eta$. We aim to bound growth under this hypothesis. Conditional on $\mathcal{H}_t$, each non-pivoted row in the first $t$ coordinates is $N(\Delta_{i,[t]},I)$ restricted to same origin-symmetric convex set $\mathcal{K}_t$ as before. For $t>r$, on the $\mathcal{H}_t$ measurable event that  $\Sigma_r(T_t)\le \tau$, Lemma \ref{lm:gauss_convex} and a union bound implies that
\[
\P\left[\max_{i \in \pi_t^c} \|B_{i,[t]}Z_t\|_2
> \left(\sqrt{2+4q \lambda}+K\right)\sqrt{\tau} \, \Big| \, \mathcal{H}_t\right]<n^{1-q},\]
and so, for $C_q$ large enough and $R=C_q(1+K^2+\lambda)(1+\tau)$,
\[\P\left[\max_i
\|(X_t)_{i,[n]}\|_2^2>R \, \Big| \, \mathcal{H}_t \right] <n^{1-q}\]
on that same event. Again, for $t \le r$, the rows of $X_t$ have two-norm one, so the bound always holds.  Now, let 
\[B_\star = B_{[n],[t+4r]\setminus[t]}, \qquad G_\star = G_{[n],[t+4r]\setminus[t]}, \qquad \text{and} \qquad \Delta_\star = \Delta_{[n],[t+4r]\setminus[t]},\]
and consider $4r$ additional steps of Gaussian elimination. Each row of $X_t B_\star$ has independent Gaussian entries, with  variance at most $R$ and mean row two-norm at most $K \sqrt{R}$. Also, conditional on $\mathcal{F}_t$,
\[Q_t^T B_\star = Q_t^T \Delta_\star + Q_t^T G_\star,\]
where $Q_t^TG_\star$ is Gaussian. By Lemmas \ref{lm:gauss_bounds} and \ref{lm:wide_gauss}, conditioned on the event $\max_i \|(X_t)_{i,[n]}\|_2^2 \le R$,
\[ \max_{i} \|(X_t B_\star)_{i,[4r]}\|_2^2  \le C_q R(K^2+ r + \lambda) \qquad \text{and} \qquad \|(Q_t^T B_\star)^\dagger\|_2^2 \le C r^{-1} e^{C_q\frac{\lambda}{r}},\]
except with conditional probability at most $C n^{-q}$. Again, let $J$ record the next $4r$ steps of elimination, with the non-pivot rows returned to increasing order. As in Lemma \ref{lm:fix},
\[E_{t+4r} = J(X_t - X_t B_\star (Q_t^T B_\star)^\dagger Q_t^T) \qquad \text{and} \qquad \max_{i} \|J_{i,[n-t]}\|_1 \le 2^{4r}.\]
Taking a union bound, together with the deterministic bound for $u \le 4r$, we have
\[\max_{u<n} \max_i \|(E_u)_{i,[n]}\|_2 \le (1+K)^2 \sqrt{1+\tau} \, e^{C_q \left(r + \frac{\lambda}{r}\right)}\]
with probability at least $1 - \eta - C n^{2 - q}$. For a row $\bm e^T$ of $E_u$ and a column $j>u$, conditional on $\mathcal{F}_u$,
\[\bm e^T B_{[n],j} \sim N(\bm e^T \Delta_{[n],j},\|\bm e\|_2^2).\]
Using the bound $|\bm e^T \Delta_{[n],j}| \le K \|\bm e \|_2$ and applying a Gaussian tail bound over the $O(n^3)$ Schur complement entries loses an additional factor $K + \sqrt{2 q \lambda}$. The same probabilistic argument lower bounding the max norm of the matrix also holds for $\|\Delta + G \|_{\max}$, and still $\|L\|_{\max} = 1$. All together, we obtain
\[\growth(\Pi(\Delta + G)) \le (1+K)^3 \sqrt{1 + \tau} \, e^{C_q \left(r+ \frac{\lambda}{r}\right)}\]
with probability $1-\eta - Cn^{3-q}$ at least. This completes the first half of the proof. 

What remains is to produce a modified truncated inverse bound (i.e., modify Proposition \ref{prop:truncated_bound}). First, we adapt Lemma \ref{lm:extension}. Restricting to the $\mathcal{F}_m$-measurable event $\sigma_{m-k}(T_m) \ge \mu$ and $\Sigma_{k}(T_m) \le 2 k \mu^{-2}$. Lemma \ref{lm:gauss_extension} multiplies both bounds in \eqref{eq:extend_estimates} by $v_k$. As in Lemma \ref{lm:extension}, we fix a set $S$ of size $s=4k$ and condition on all the other rows. Again, $\pi_m^S$ is the first $m$ pivots after deleting $S$, $j = \lfloor d/2\rfloor$, and here the columns of $Q_S$ form an orthonormal basis for $\mathrm{null}(B_{\pi_m^S,[m+s]})$. On the event that $B_{\pi_m^S,[m+s]}$ does satisfy the above two bounds, Lemma \ref{lm:gauss_bounds} with $x = (s+\beta + 1)\lambda$ implies that
\[\sigma_{j +1}(B_{S,[m+s]} (B_{\pi^S_m,[m+s]})^\dagger) \le \sqrt{k} \mu^{-1} v_k e^{C_\beta \left(1+ \frac{\lambda}{d}\right)},\]
except with conditional probability at most $n^{-s-\beta-1}$. The small singular value estimate for $B_{S,[m+s]}Q_S$ is unchanged, as Lemma \ref{lm:gauss_sing_val} is unchanged by a non-zero mean. Together, the new output threshold for Lemma \ref{lm:extension} is
\[\frac{\mu d}{kv_k} e^{-C_\beta \left(1+\frac{k \lambda}{d^2}\right)}.\]
The quantity $f_K(r)$ tracks the accumulation of the additional term $v_k$ in the denominator. Take $\beta = \gamma +2$ and choose $D_\gamma$ sufficiently large. Because $f_{K}(d)$ is non-increasing in $d$, $g_{K}(d) /d$ is non-decreasing still. The same argument holds for the product recurrence $k(d)<n$, now with
\[\frac{g_{K}(d)}{g_{K}(k)} = \frac{d}{k v_k} e^{-D_\gamma \left(1+\frac{\lambda}{d}\right)},\]
which leads to the worse bound
\[\max_{t<n} \Sigma_r(T_t) \le \frac{2n}{r} e^{2 D_\gamma h(r)}f_K(r)^2 \le  \frac{C_\gamma n}{r} e^{C_\gamma \left(\frac{\lambda}{r}+\log \log n\right)} f_{K}(r)^2\]
with probability at least $1-n^{-\gamma}$.

The result follows by combining both arguments, with $\gamma = \alpha + 1$, $\eta = n^{- \gamma}$, and $q = \alpha + 4$.
\end{proof}

\subsection{Proof of Theorem \ref{thm:smooth_pp}}

Our desired Theorem \ref{thm:smooth_pp} follows immediately from Theorem \ref{thm:smooth_general}.

\begin{proof}[Proof of Theorem \ref{thm:smooth_pp}]
Consider Theorem \ref{thm:smooth_general} with $\Delta = A$ and $r = \lceil e^4 \lambda \rceil$, for which $r + \lambda/r = O(\lambda)$. Let $n_\alpha$ be large enough so that $r<n/8$. Let $m$ be the last index for which $d_m<n$ in the chain starting with $d_0=r$. Because $d_{i+1} \ge d_i^2/\lambda$, 
\[\log(d_i/\lambda) \ge 2^i \log(r/\lambda) \ge 2^{i+2}.\]
Therefore, $2^{m+2}< \lambda$ and $m + 2 < \log (\lambda)/\log 2$. Let $K = \epsilon^{-1}\|A\|_2$. There are $m$ factors in $f_{K}(r)$, each at most $2 \epsilon^{-2}$, and so
\[\left(1 + K\right)^3 f_{K}(r) \le 2^{m+3}\epsilon^{-(2m+3)} \le 2 \lambda \epsilon^{-(2/\log 2) \log \lambda}. \]
\end{proof}

\section*{Acknowledgements}
The author thanks Louisa Thomas for improving the style of presentation, Alan Edelman for many conversations on all aspects of Gaussian elimination, Nick Trefethen and Konstantin Tikhomirov for conversations regarding the average-case analysis of partial pivoting, and Rikhav Shah for conversations regarding orthogonal preconditioning. The author also thanks Nick Trefethen for comments on an earlier unpublished draft. This material is based upon work supported by the National Science Foundation under Grant No. DMS-2513687.

\section*{AI Usage Statement}
The research presented here is the result of human–AI interaction involving ChatGPT Sol 5.6 \& Astra 6. For example, the proof of Conjecture \ref{conj:trefethen} is the result of a human-only proof sketch for the bottom right $o(n)$ entries of $U_{ij}$, which was improved to a proof for all entries through AI interaction. The other results presented here follow a similar trajectory: a combination of some human-only ideas and expertise which was improved through AI interaction. The results of Sections \ref{sec:orthogonal} and \ref{sec:gauss}, though quite interesting to the author, are not technically difficult, and likely did not need human input. There is reason to suggest, however, that the proof of Conjecture \ref{conj:trefethen} was resilient to standard use of an AI system. Conjecture \ref{conj:trefethen} was listed on the well-known GitHub repository OpenProblemsInNLA, for which roughly half of the $>200$ open problems have been solved, largely with autonomous AI systems \cite{townsend2026openproblemsnla}. Conjecture \ref{conj:trefethen} was given a designation of open, challenging, and interesting to the community and remained unsolved prior to this manuscript. The author welcomes detailed questions regarding the role of AI in this work.  The entire manuscript was written by the author.

{ \small 
	\bibliographystyle{plain}
	\bibliography{main.bib} }

\end{document}